\theoremstyle{plain}
\newtheorem{theorem}{Theorem}[section]
\newtheorem{lemma}[theorem]{Lemma}
\newtheorem{dfn}[theorem]{Definition}
\theoremstyle{remark}
\newtheorem{remark}{Remark}
\DeclareMathOperator*{\OmSum}{\mathlarger{\mathlarger{\mathlarger{ \Omega}}}}
\DeclareMathOperator*{\MhSum}{\mathlarger{\mathlarger{\mathlarger{\mho}}}}
\DeclareMathOperator{\len}{len}
\DeclareMathOperator{\Res}{Res}
\DeclareMathOperator{\Rsd}{Rsd}
\begin{document}

\title{The Compositional Integral\\The Narrow And The Complex Looking-Glass}

\author{James David Nixon\\
	JmsNxn92@gmail.com}

\maketitle

\begin{abstract}
The goal of this paper is to formalize The Compositional Integral in The Complex Plane. We prove a convergence theorem guaranteeing its existence. We prove an analogue of Cauchy's Integral Theorem--and suggest an approach at recovering Cauchy's Integral Formula. With this we derive a modified form of Cauchy's Residue Theorem. Then, we develop a compositional analogue of Taylor Series. In finality, we describe a compositional Fourier Transform; and illustrate some basic properties of it.
\end{abstract}

\emph{Keywords:} Complex Analysis, Infinitely Nested Compositions, Contour Integration.\\

\emph{2010 Mathematics Subject Classification:} 32A17; 32W99; 34M99;\\

\tableofcontents

\chapter{A Brief Overview}\label{chap1}

\section{Preface}\label{secO1}
\setcounter{equation}{0}

This section is intended to remind the reader of what the notation we use throughout denotes. This notation is largely novel, and to a seasoned mathematician may seem a \textit{tad odd}. A large portion of this work speaks of First Order Differential Equations; but in almost its entirety, we make no reference to existing work. This is quite frankly because the work requires we speak differently than how convention dictates we speak. And largely because the author does not like the state of affairs.

Of these notations, and these notions; they begin by being rather intuitive and simple. They appear almost as inconsequential results of the grander theory of differential equations as they exist currently. But in using these results to rebuild the theory we can speak differently; and introduce new and exciting ideas. 

The majority of this work will be rather simple. This is derided by the fact, that largely this work is notational. And even, more largely, an exact analogue of what is usually found in the field of analysis. As per this, this paper can be more or less considered a motivation for a system of notations. A formalist's dream so to speak.

The exception of this work, and in the rephrasal of everything; is that calculus is not treated as a sum and product game. Where in the usual basis of calculus, next to everything is sums and products and limits of these things. In our case, the central object of concern is the composition operator. As opposed to infinite sums, and infinite products, we have infinite compositions. As opposed to integrals and derivatives, we have The Compositional Integral and First Order Differential Equations.

The point of this first chapter is to mostly recant work done in \cite{Nix, Nix2}. For a more detailed exposition on the work done here, the reader is asked to refer to these papers. Upon which, most of the work is open ended and, encore un esprit de formalit\'{e}.

\section{The $\Omega$-notation}\label{secO2}
\setcounter{equation}{0}

In the paper \cite{Nix2}, the author developed an approach at handling infinite compositions. It is a long paper, and to expect the author to summarize it is a bit absurd. Nonetheless, not much is used from this paper except the notation. But, the brunt of the motivation for this notation comes from this paper. We recommend you read \cite{Nix2}.

Throughout this paper the symbols $\OmSum$ and $\MhSum$ shall be reserved for use as complimentary operators. If $\phi_j$ is a sequence of functions then,

\[
\OmSum_{j=n}^m \phi_j(z) \bullet z = \phi_n(\phi_{n+1}(...\phi_m(z)))
\]

And,

\[
\MhSum_{j=n}^m \phi_j(z) \bullet z = \phi_m(\phi_{m-1}(...\phi_n(z))))
\]

We can refer to operations across $\OmSum$ as inner compositions; and complimentary, operations across $\MhSum$ as outer compositions. This language is somewhat novel, and arises from adding terms on the inside or on the outside. Sadly the author could not find the origination of these terms. Or where he found them. The standard literature calls these right handed or left handed compositions, respectfully. Luckily we are in a position of standardization, as there is no true standard.

Here the $\bullet\, z$ represents which variable we perform our compositions across. In such a sense, $\bullet$ binds $z$ to $\OmSum$ (or $\MhSum$). This becomes necessary when our functions $\phi_j$ depend on some other paramater $s$. Which is to mean,

\[
\OmSum_{j=n}^m \phi_j(s,z)\bullet z = \phi_n(s,\phi_{n+1}(s,...\phi_m(s,z)))
\]

And,

\[
\OmSum_{j=n}^m \phi_j(s,z)\bullet s = \phi_n(\phi_{n+1}(...\phi_m(s,z),...z),z)
\]

This follows similarly for the operator $\MhSum$. The indication of these notations is rather straightforward; they behave little differently than the notation $\sum$ or $\prod$. Except, it is necessary we bind them to a variable.

These two operators are complimentary and they serve to describe orientation. Which is whether we have left handed orientation or right handed orientation. These two orientations are a tad bit perverse compared to what we usually think about when we posit orientation; they are related by the functional inverse, and they're non-abelian. If we take $f^{-1}$ to represent the functional inverse, this means,

\[
\Big{(}\OmSum_{j=n}^m \phi_j(z)\bullet z\Big{)}^{-1} = \MhSum_{j=n}^m \phi_j^{-1}(z)\bullet z
\]

This relationship is of dire importance. It is necessitated that orientation enters the conversation every so often; but it will be slightly different from what we're used to. In that sense, $\MhSum$ is the inverse orientation of $\OmSum$. It's pretty easy to remember, they are flipped versions of the same symbol. 

The author chose $\OmSum$, or inner compositions, to be the canonical orientation; but in truth there is no correct orientation. Ironically, $\OmSum$ serves to be the more difficult case, and also serves to be the more useful case in all the instances the author has encountered these compositions. On occasion, we may see an instance where $\MhSum$ appears more naturally--but on the whole we will reserve the use of this symbol to when necessary. With that, we can think of $\OmSum$ as forwards, and $\MhSum$ as backwards. This is also easy to remember because the indexes go forwards in $\OmSum$, and go backwards in $\MhSum$. And $\mho$ is upside down; indicating backwards.

The bullet $\bullet$ also serves an additional purpose. It is designed to act as a non-commutative product a tad different than the typical $\circ$-notation. If the author is to write $f \bullet g \bullet z$, it is intended to mean $f(g(z))$. This notational stitch becomes very convenient when $f$ and $g$ depend on other variables and it is difficult to denote composition. It also, in this sense, can be thought similarly to a differential form; where $\OmSum_j f_j \bullet g_j \bullet z$ has clear meaning as $\OmSum_j f_j(g_j(z))\bullet z$. Of the same character, $\OmSum_j f_j \bullet g \bullet z$ will be given the meaning $\big{(}\OmSum_j f_j \bullet z\big{)}\bullet g \bullet z$. The $\bullet$-notation is rather fluid but the reader should care to notice that without it this paper would probably stretch ten more pages. We will only use this convenience when necessary, but it certainly has its advantages. And upon usage of these symbols a clear description will be inferred by context.

As is the case with traditional analysis--the central point of study is when we let $m\to\infty$. Expressions of these forms shall loosely be referred to as Infinite Compositions. With respect to this, the most important aspect of this notation can be summarized by the following theorem. This theorem was partially presented in \cite{Nix2}. There we spoke more fluidly about the types of situations where one can get theorems like the below. Here we will only state what is required for this exposition.

\begin{theorem}[The Compactly Normal Convergence Theorem]\label{ThmNormCvg}
Let $\mathcal{S}$ and $\mathcal{G}$ be domains in $\mathbb{C}$. Suppose $\{\phi_j(s,z)\}_{j=0}^\infty$ is a sequence of holomorphic functions such that $\phi_j : \mathcal{S} \times \mathcal{G} \to \mathcal{G}$. If for all compact disks $\mathcal{B} \subset \mathcal{S}$ and $\mathcal{K} \subset \mathcal{G}$ the following sum converges,

\[
\sum_{j=0}^\infty ||\phi_j(s,z) - z ||_{\mathcal{B},\mathcal{K}} < \infty
\]

Then the infinite compositions,

\[
\OmSum_{j=0}^\infty \phi_j(s,z)\bullet z
\]

And,

\[
\MhSum_{j=0}^\infty \phi_j(s,z)\bullet z
\]

Converge uniformly on all compact sets $\mathcal{B} \subset \mathcal{S}$ and $\mathcal{K} \subset \mathcal{G}$.
\end{theorem}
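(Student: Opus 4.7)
The plan is to reduce both claims to a uniform Cauchy criterion, using a tail-truncation trick familiar from the theory of infinite compositions. By a covering argument it suffices to fix compact disks $\mathcal{B}\subset\mathcal{S}$ and $\mathcal{K}\subset\mathcal{G}$. I would nest $\mathcal{K}$ inside slightly larger compact disks $\mathcal{K}\subset\mathcal{K}_1\subset\mathcal{K}_2\subset\mathcal{G}$ with $r := \mathrm{dist}(\mathcal{K}_1,\partial\mathcal{K}_2)>0$, and then, by the summability hypothesis, pick an index $N$ so large that $\sum_{j\geq N}||\phi_j(s,z)-z||_{\mathcal{B},\mathcal{K}_2} < r$.

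For the outer case, set $F_m(s,z) = \MhSum_{j=N}^{m}\phi_j(s,z)\bullet z$, so that $F_m(s,z) = \phi_m(s,F_{m-1}(s,z))$. The telescoping identity
\[
F_m(s,z) - F_{m-1}(s,z) = \phi_m(s,F_{m-1}(s,z))-F_{m-1}(s,z)
\]
becomes useful the moment $F_{m-1}$ can be trapped inside a fixed compact subset of $\mathcal{G}$, and the choice of $N$ provides exactly that: an induction on $m$ gives $|F_m(s,z)-z|\leq\sum_{j=N}^{m}||\phi_j(s,z) - z||_{\mathcal{B},\mathcal{K}_2} < r$, so $F_m(s,z)\in\mathcal{K}_2$ throughout $\mathcal{B}\times\mathcal{K}_1$. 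The increments are then majorized by the summable sequence $||\phi_m(s,z) - z||_{\mathcal{B},\mathcal{K}_2}$, and $F_m$ is uniformly Cauchy on $\mathcal{B}\times\mathcal{K}_1$. Prepending the fixed finite composition $\MhSum_{j=0}^{N-1}\phi_j\bullet z$, whose image in $\mathcal{G}$ can be arranged to lie inside $\mathcal{K}_1$ by enlarging the initial nesting from the outset, recovers uniform convergence of the full $\MhSum_{j=0}^{\infty}\phi_j\bullet z$.

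For the inner case, set $T_m(s,z) = \OmSum_{j=N}^{m}\phi_j(s,z)\bullet z$. Here appending a factor to the inside gives $T_{m+1}(s,z) = T_m(s,\phi_{m+1}(s,z))$, and hence
\[
T_{m+1}(s,z) - T_m(s,z) = T_m(s,\phi_{m+1}(s,z)) - T_m(s,z),
\]
which is no longer a direct telescoping but a holomorphic increment of $T_m$ in its second argument. The same hostage induction, this time read from index $m$ down to $N$, shows $T_m(s,w)\in\mathcal{K}_2$ for every $w\in\mathcal{K}_1$, so $T_m$ has a uniform-in-$m$ sup bound on $\mathcal{B}\times\mathcal{K}_1$. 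Cauchy's estimate then converts this $L^\infty$ bound into a uniform derivative bound $|\partial_w T_m(s,w)|\leq C$ on $\mathcal{B}\times\mathcal{K}$, where $C$ depends only on the geometry of the nested disks. The displayed difference is thus at most $C\,||\phi_{m+1}(s,z)-z||_{\mathcal{B},\mathcal{K}}$, a summable majorant, so $T_m$ is uniformly Cauchy on $\mathcal{B}\times\mathcal{K}$. Postcomposing with the fixed finite $\OmSum_{j=0}^{N-1}\phi_j\bullet z$ recovers the full inner composition.

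The principal obstacle is the Cauchy-estimate step for the inner case: one must nest \emph{three} compact disks rather than two, so that $T_m$ is stably bounded on an open neighbourhood of $\mathcal{K}$ rather than merely at points of $\mathcal{K}$ itself. This is essentially the only place in the argument where the holomorphy hypothesis, beyond mere continuity, is used; everything else reduces to a Weierstrass-$M$-test style telescoping driven by the summability of $||\phi_j(s,z)-z||$.
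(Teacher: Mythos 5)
Note first that the paper does not actually prove this theorem: it is imported wholesale from \cite{Nix2}, and all that is recorded here is the comparison inequality
\[
\Big{|}\Big{|}\OmSum_{j=n}^m \phi_j(s,z)\bullet z - z\Big{|}\Big{|}_{\mathcal{B},\mathcal{K}} \le \sum_{j=n}^m ||\phi_j(s,z)-z||_{\mathcal{B},\mathcal{L}}
\]
for a suitably enlarged compact set $\mathcal{L}$. Your argument supplies exactly the machinery behind that inequality --- nested compact disks, a tail index $N$ chosen so the remaining deviations sum to less than the gap between the disks, the trapping induction, direct telescoping for $\MhSum$, and a Cauchy-estimate Lipschitz bound for $\OmSum$ --- so it is the expected proof rather than a genuinely different route, and its core is correct. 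Your observation that holomorphy is only truly needed in the inner case (to convert the uniform bound on $T_m$ into a uniform derivative bound) is accurate and is the real content of the theorem; the outer case is a pure Weierstrass $M$-test. The one step to tighten is the gluing of the head to the tail in the \emph{outer} case. There the head $\MhSum_{j=0}^{N-1}\phi_j\bullet z$ is applied innermost, so the tail must converge uniformly on a compact neighborhood of the head's image; but that image depends on $N$, which you chose from the nesting, which you propose to choose around the head's image ``from the outset.'' As written this is circular: you cannot know $N$ before choosing $\mathcal{K}_2$, nor $\mathcal{K}_2$ before knowing where the first $N$ maps send $\mathcal{K}$. It is repairable --- fix a provisional $N_0$, take the compact set swept out by the finitely many partial compositions up to index $N_0$, re-nest around that set and rerun the trapping induction for the tail --- but the order of quantifiers has to be spelled out, and this is precisely the point where the merely \emph{compact} normality of the hypothesis (as opposed to a sup over all of $\mathcal{G}$) makes the outer case delicate. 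The inner case has no such issue, since there the head is applied outermost and one only needs its uniform continuity on the fixed compact set $\mathcal{K}_2$ that traps the tail.
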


This theorem tells us we have good control over the infinite compositions if they are compactly normally summable. The reader may also benefit from the intuition that for every compact set $\mathcal{K}$ there is a larger compact set $\mathcal{L}$ such that,

\[
||\OmSum_{j=n}^m \phi_j(s,z)\bullet z - z||_{\mathcal{B},\mathcal{K}} \le \sum_{j=n}^m ||\phi_j(s,z) - z||_{\mathcal{B},\mathcal{L}}
\]

This comparison underlies the entire theory of this paper. In controlling the sum on the right hand side we can control the expression on the left. It has just enough malleability to make the transitition back and forth reasonable.

The important thing to remember from the phrasing of this theorem; is if the sequence of functions are normally summable, they are normally composable. Furthering this, limits behave just as well by comparing them to limits of sums. The interchange of sums to compositions and back will be done frequently in this paper. It is important to be reminded that this is done entirely rigorously; largely in due part to our work done in \cite{Nix2}.

\section{The differential bullet product $ds\bullet z$}\label{secO3}
\setcounter{equation}{0}

The second thing to introduce is the differential bullet product. This notion was more clearly developed in \cite{Nix}. Here we will simply carve out some key properties of it. The author strongly recommends reading \cite{Nix}, as we will make some logical leaps in this paper; of which are more straight and forward if you read \cite{Nix}.

The differential bullet product $ds \bullet z$ is intended to be paired with an integral $\int$. Nonetheless, treating it as a type of differential form can help build intuition, and further enlighten a more general character. The bullet $\bullet$ becomes integrally connected to the above $\Omega$-notation.

To begin, if we call $y$ the solution to the equation,

\[
y(x) = z + \int_{a}^x \phi(s,y(s))\,ds
\]

Then this can be written more compactly,

\[
y(x) = \int_a^x \phi(s,z)\,ds\bullet z
\]

The reason for the bullet aligns with our $\Omega$-notation from above. Using Euler's method, if $\{s_j\}_{j=0}^n$ is a partition of $[a,b]$ in \emph{descending} order, and supposing $s_{j+1} \le s_j^* \le s_j$ and $\Delta s_j = s_j - s_{j+1}$ then,

\[
\int_a^b \phi(s,z)\,ds\bullet z = \lim_{\Delta s_j \to 0} \OmSum_{j=0}^{n-1}z + \phi(s_j^*,z)\Delta s_j \bullet z 
\]

The $\Delta s_j$ term can be thought of as an infinitesimal increment which looks like $ds$ in the limit. The expression $\OmSum_{j=0}^{n-1} z+$ is a fancy kind of $\sum_{j=0}^{n-1}$ which in the limit becomes $\int_a^b$. Which is a rough manner of understanding the notation.

Mostly out of preference the author chooses to use the operator $\OmSum$ and forces our partition to be descending. If we chose an ascending partition $a_j = s_{n-j}$; then the expression could also be written,

\[
\MhSum_{j=0}^{n-1} z + \phi(a_j^*,z)\Delta a_j \bullet z
\]

The author purposefully avoids the operator $\MhSum$ as much as possible. This is done mostly to maintain clarity and the greater importance of $\OmSum$. Though, in practise, both expressions are equivalent. He imagines if he used both symbols to their extreme it may be a tad confusing.

This differential form also satisfies the usual laws of Leibniz substitution. Where if $s = \gamma(u)$ and $ds = \gamma'(u)du$, then $ds \bullet z = \gamma'(u)du\bullet z$. Or written under the integral, if $\gamma(a') = a$ and $\gamma(b') = b$,

\[
\int_a^b \phi(s,z)\,ds \bullet z = \int_{a'}^{b'} \phi(\gamma(u),z)\gamma'(u)\,du \bullet z
\]

Symbolically, all that's needed is a quick application of the mean value theorem to get $\Delta s = \gamma' \Delta u$. Then the composition behaves no different. This again gives us a glimmer of the concept of orientation. If we take the integral from $b$ to $a$ instead,

\[
\int_b^a\phi(s,z)\,ds\bullet z = \MhSum_{j=0}^{n-1} z - \phi(s_j^*,z)\Delta s_j \bullet z
\]

Where now everything is backwards compositionally. This is especially true because $z - \phi(s_j^*,z)\Delta s_j \approx \big{(}z + \phi(s_j^*,z)\Delta s_j\big{)}^{-1}$, which is the functional inversion. This statement is correct (though it needs to be stated with some caveats) and aligns perfectly with the crude statement $\int_a^b = \big{(}\int_{b}^a\big{)}^{-1}$. In such a sense, our sense of orientation is compatible with integration.

We refer to the coupled pair $\int ... ds \bullet z$ as The Compositional Integral. Its similarity to the usual integral extends in many manners. Its main exception is that it behaves under composition as the integral behaves under addition. And the whole mess is non-abelian. So, there's sure to be some trade-offs in the switch up. The most striking resemblence being,

\[
\int_b^c \phi(s,z)\,ds\bullet \int_a^b \phi(s,z)\,ds\bullet z = \int_a^c \phi(s,z)\,ds \bullet z
\]

Which keeps in tone with the bullet notation above. To further our notational conveniences; we will sometimes write expressions of the form,

\[
\OmSum_{j=n}^m \int \phi_j(s,z)\,ds\bullet z
\]

These are taken to mean,

\[
\int \phi_n(s,z)\,ds\bullet \int \phi_{n+1}(s,z)\,ds\bullet ... \bullet \int \phi_m(s,z)\,ds\bullet z
\]

This is to entice the reader as to thinking that $\OmSum \int$ is its own type of operator acting on the differential form $\phi_j(s,z)\,ds\bullet z$. There are a few variations of this theme which will be used throughout this paper. But the author will attempt to maintain as much clarity as possible.

Fiddling with these objects will be the central focus of this work. For a more detailed introduction to the differential bullet product we refer to \cite{Nix}. There it is put with greater contrast to the usual integral and the development of First Order Differential Equations. It is also motivated much more aggressively.

\chapter{The Basics Of Contour Integration}\label{chap2}

\section{Introduction}\label{secC1}
\setcounter{equation}{0}

This paper is intended to set in stone the behaviour of the compositional integral in the complex plane. We will spend a large portion of time developing the intuition necessary to understand the behaviour of Compositional Contours. We will then prove multiple results about these strange contour-\textit{like} integrals.

At the present moment, we lay at a similar point Augustin-Louis Cauchy must have laid at. If $f(x,t): \mathcal{I} \times \mathbb{R} \to \mathbb{R}$ is a nice real-valued function, for an interval $\mathcal{I}$, we have a real-valued Compositional Integral; namely the function,

\[
Y_{ba}(t) = \int_a^b f(x,t)\,dx\bullet t
\]

This integral isn't necessarily defined for $a,b \in \mathcal{I}$ and $t \in \mathbb{R}$; but we know when it is and when it isn't. The function $Y_{ba}(t)$ is a nice function: typically continuous in all variables depending on lipschitz conditions. We also have The Riemann Composition of this integral, which is given as follows. Let $P = \{x_j\}_{j=0}^n$ be a partition of $[a,b]$ in \textit{descending} order, and $x_{j+1} \le x_j^* \le x_j$. Denoting $\Delta x_j = x_j - x_{j+1}$, then:

\[
Y_{ba}(t) = \lim_{\Delta x_j \to 0} \OmSum_{j=0}^{n-1} t + f(x_j^*,t)\Delta x_j \bullet t
\]

This thing converges in a sure enough manner--per the age-old Euler's method. 
So, although we have not necessarily proven this fact yet (we don't really need to, but for illustration purposes); we will provide a proof of something else which suffices for our present purposes. In that, we will prove a differing result for holomorphic functions--and the above statement is never used. Nonetheless, the proof we provide can easily be adapted to the case $f$ is Lipschitz in $t$; and the result is almost surely necessary--at least locally. Just as well this result is common knowledge; though it is more familiarly known as Euler's Method when letting the step-size approach zero. We simply choose to write it in the language of partitions.

This definition works elaborately well on the real-line--I mean it got Euler pretty far. But as Cauchy looked at the real-valued integral and wanted a complex-valued integral--we look at this. We want to add the language of arcs and contours in the complex plane to these expressions.

So to begin, we change our domain of interest. Let $\phi(s,z) : \mathcal{S} \times \mathcal{G} \to \mathcal{G}$ where $\mathcal{S}$ and $\mathcal{G}$ are domains in $\mathbb{C}$. Let's assume throughout that $\phi$ is holomorphic in both variables. If $\gamma: [a,b] \to \mathcal{S}$ is a differentiable arc in $\mathcal{S}$, then the integral along this arc is written,

\[
Y_\gamma(z) = \int_\gamma \phi(s, z)\,ds \bullet z = \int_a^b \phi(\gamma(x),z)\gamma'(x)\,dx \bullet z
\]

Where here $Y_{\gamma}(z)$ is a holomorphic function which doesn't necessarily take $\mathcal{G} \to \mathcal{G}$, but takes some subset $\mathcal{U}$ of $\mathcal{G}$ to $\mathcal{G}$. These integrals are independent of our choice of parametrization, due to the substitution law of the compositional integral. (This will also be proved in the coming section.)

This can be written a bit more conveniently as the expression,

\[
\int_\gamma \phi(s,z) \, ds \bullet z = \int_a^b \phi(\gamma(x),z)\,d\gamma \bullet z
\]

And from this, an identification in the likes of The Riemann-Stieltjes Integral can be made. We take this as the definition of our contour integral.

\[
\int_\gamma \phi(s,z) \, ds \bullet z = \lim_{\Delta x_j \to 0} \OmSum_{j=0}^{n-1} z + \phi(\gamma(x_j^*), z) \Delta \gamma_j \bullet z
\]

Where here $\{x_j\}_{j=0}^{n}$ is a partition of $[a,b]$ in descending order, and $x_{j+1} \le x_j^* \le x_j$. Further the expression $\Delta \gamma_j = \gamma(x_j) - \gamma(x_{j+1})$. This expression may be meaningless if one of the composites leaves $\mathcal{G}$. So for the moment think of this locally (small $\gamma$, $z$ in a neighborhood).

Another important consideration to make is throughout this paper we will assume $\gamma$ is continuously differentiable. We will not allow for piecewise arcs. This is done to save space; shorten proofs and discussion. However, throughout $\gamma$ could be piecewise--it would only require adding a few lines to each of the proofs.\\

There exists an algebra of arcs at our hands; but it is wildly different than the algebra which existed for Cauchy. Namely, it is non-abelian. If we have two arcs, $\gamma_1$ and $\gamma_2$, then our operation will be concatenation in some sense, and composition in another sense. We denote this $\gamma_1 \bullet \gamma_2$ which is in general non-commutative.

This operation can be surmised by the relation,

\[
Y_{\gamma_1 \bullet \gamma_2}(z) = Y_{\gamma_1}(Y_{\gamma_2}(z))
\]

This can be written more self-contained in the expression,

\[
\int_{\gamma_1 \bullet \gamma_2} \phi(s,z)\,ds \bullet z = 
\int_{\gamma_1} \phi(s,z)\,ds \bullet \int_{\gamma_2} \phi(s,z)\,ds \bullet z
\]

Where in particular $\gamma^{-1}$ is the arc which traverses backwards, similarly to Cauchy. Except the notational convenience $\gamma^{-1} = - \gamma$ is incorrect as these operations are non-abelian. Instead we are given the relation,

\[
Y_{\gamma^{-1}}(z) = Y_{\gamma}^{-1}(z) = \int_a^b -\phi(\gamma(b+a-x),z)\gamma'(b+a-x)\,dx \bullet z
\]

We can therefore think of the mapping $\gamma^{-1}:[a,b] \to \mathcal{S}$ as the arc $\gamma(b+a-x)$. We cannot make a rigorous statement of this fact without considering the domain in which $z$ is defined. As to this, the equivalence should be interpreted implicitly in neighborhoods of $z$; or as a good heuristic of what it should look like. But it will take a lot to make this correct.

This is in no way the general truth without additional information. Nonetheless, we can think of inverting a contour integral compositionally, as reversing the orientation of the contour. This can be done rigorously locally in $z$; because, as we shall see $\frac{d}{dz}Y_\gamma(z) \neq 0$. So a local holomorphic functional inverse in $z$ always exists provided we are in the co-domain of $Y_\gamma(z)$.

Now, a little nugget of gold in this notation resides. These integrals, as expected, return to the usual Cauchy kind when $\phi(s,z)$ is constant in $z$ with an added term $z$. That is to mean,

\[
\int_\gamma \phi(s)\,d s \bullet z = z + \int_\gamma \phi(s)\,d s
\]

Of which, the algebra reduces to the usual commutative algebra Cauchy envisioned. From this one can see our construction as a strict generalization of Cauchy's construction.\\

To express what we are going to do in this paper is fairly difficult. These objects are very foreign, and the symbology is novel. Of this, the reader is expected to read with care, as the author shan't pull punches. Although we mostly play a formalist's game--the $\epsilon$-$\delta$ of this work is fairly high brow.

The first thing to do is set in stone the convergence of these objects. Although we have just written a bunch of equations down, and they seem to be fairly intuitive, we do not know if we can put a stamp of $\epsilon-\delta$ approval next to them. Do these things even converge?

The second thing to do, is to prove the equivalent of Cauchy's Integral Theorem. Namely, that if $\gamma$ is a closed contour in $\mathcal{S}$ (and additionally, $\mathcal{S}$ is simply-connected) then,

\[
Y_\gamma(z) = \int_\gamma \phi(s , z) \,d s \bullet z = z
\]

This will mark the beginning of our foray. And to be flat; you could definitely prove this with a second year knowledge of ODEs. But the novelty is in the formalization.

From this we broach the concept of extending Cauchy's idea of a residue; which arises when $\phi$ has poles in the domain $\mathcal{S}$. And from this we generalize the concept of Taylor Series. As an ellipsis, in the end we introduce The Compositional Fourier Transform. So, without further ado...

\section{Normality Theorems of Contour Integrals}\label{secC2}
\setcounter{equation}{0}

In this section the following schema is used: The sets $\mathcal{S}$ and $\mathcal{G}$ are domains in $\mathbb{C}$. The function $\phi(s,z):\mathcal{S}\times \mathcal{G} \to \mathcal{G}$ is a holomorphic function in both variables. We consider a continuously differentiable arc $\gamma:[a,b]\to \mathcal{S}$. 

The aim of this section is to show The Riemann Composition converges; at least in a sufficient instance. That is to mean; let $\{x_j\}_{j=0}^{n}$ be a partition of $[a,b]$ in descending order, and $x_{j+1} \le x_j^* \le x_j$. Remembering $\Delta x_j = x_j - x_{j+1}$ and $\Delta \gamma_j = \gamma(x_j) - \gamma(x_{j+1})$, then the goal is to show,

\[
Y_{\gamma}(z) = \lim_{\Delta x_j \to 0} \OmSum_{j=0}^{n-1} z + \phi(\gamma(x_j^*),z)\Delta \gamma_j \bullet z
\]

Converges uniformly as $\Delta x_j \to 0$ on some compact subset of $\mathcal{G}$. This limit is independent of how we partition $[a,b]$; and gives a unique value for $\int_\gamma \phi(s,z)\,ds\bullet z$. In other words, we want to show that $Y_{\gamma}(z)$ is a holomorphic function taking $\mathcal{U} \to \mathcal{G}$ for some $\mathcal{U} \subset \mathcal{G}$, and it is uniquely defined.

Due to the group structure of our algebra of contours, to prove convergence we can deconstruct $\gamma$ as a collection of contours $\{\gamma_j\}_{j=1}^n$ such that $\gamma = \gamma_1 \bullet \gamma_2 \bullet ... \bullet \gamma_n = \OmSum_{j=1}^n \gamma_j$ where the length of $\gamma_j$ is less than $\rho$ for some small $\rho$--so, if the integral converges sufficiently for each arc, so does the total. This is to mean, we only need to show convergence for small arcs, and by our algebra of contours the result is derived for most arcs; at least up to some restriction to a smaller set. This will suffice for our purposes. So, without sufficient loss of generality, we will assume $\len(\gamma) \le \rho$ for some $\rho$ to be disclosed for each compact set we show convergence on.

The second thing we can do is restrict $|z-z_0| \le \delta$, so just as well we are only worried about small neighborhoods in $z$. The method requires we prove local uniform convergence. Again, this can be assumed without sufficient loss of generality.

The essential intuition is not difficult to suss out. When we worry about both variables only in a local sense; the partial compositions,

\[
\OmSum_{j=0}^{n-1} z + \phi(\gamma(x_j^*),z)\Delta \gamma_j \bullet z \approx z + \sum_{j=0}^{n-1}\phi(\gamma(x_j^*),z)\Delta \gamma_j
\]

And as we let the limit $\Delta x_j \to 0$; since the right hand side converges, we can use this to show the left hand side does as well. Though not without some massaging.

To begin we fix a compact disk $\{z \in \mathbb{C}\,:\,|z-z_0| \le P \} = \mathcal{K} \subset \mathcal{G}$ such that $|z-z_0| \le \delta < P$ lives within $\mathcal{K}$ for some $\delta>0$. We are interested in the quantity,

\[
\int_\gamma ||\phi(s,z)||_{z\in \mathcal{K}} \,|ds| \le \rho \cdot ||\phi(s,z)||_{s \in \gamma, z\in \mathcal{K}} = \kappa
\]

The value $\len(\gamma) = \rho$ can be chosen as small as we like so that $\kappa$ is as small as we like. Make a choice of $\rho$ such that $|z-z_0| \le \delta + \kappa < P$ lives inside of $\mathcal{K}$. Define $Y_n(z)$ to be the partial compositions of our integral,

\[
Y_n(z) = \OmSum_{j=0}^{n-1} z + \phi(\gamma(x_j^*),z)\Delta \gamma_j \bullet z
\]

Then the first aim is to show that for all $n$,

\[
||Y_n(z) - z||_{|z-z_0|\le\delta} \le \kappa
\]

This will give us the convenient knowledge that $Y_n$ is a normal family in the neighborhood $|z-z_0|\le \delta$. Ipso facto, this inequality is satisfied for all partitions. This can be phrased: the set of all partial compositions of the contour integral $\int_\gamma \phi(s,z)\,ds \bullet z$ form a normal family. This will be our first lemma.

\begin{lemma}\label{lma2A}
The family of functions $\mathcal{F}$ of partial compositions of the contour integral $\int_\gamma \phi\,ds\bullet z$, for each element $Y$ satisfy,

\[
||Y(z) - z||_{|z-z_0|\le\delta} \le \kappa
\]
\end{lemma}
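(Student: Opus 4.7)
The plan is to prove the bound by backward induction on the inner partial compositions, using the choice of $\rho$ that guarantees $\delta+\kappa<P$ to keep every intermediate point inside $\mathcal{K}$. Write $\phi_j(w) = w + \phi(\gamma(x_j^*),w)\Delta\gamma_j$ so that, per the $\Omega$-notation,
\[
Y_n(z) \;=\; \phi_0\bigl(\phi_1(\cdots\phi_{n-1}(z)\cdots)\bigr).
\]
Define the ``tail'' compositions $W_k(z) = \phi_k(\phi_{k+1}(\cdots\phi_{n-1}(z)\cdots))$ for $0 \le k \le n-1$, together with $W_n(z)=z$, so that $W_k(z) = W_{k+1}(z) + \phi(\gamma(x_k^*),W_{k+1}(z))\Delta\gamma_k$ and $Y_n = W_0$.

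I would then prove by downward induction on $k$ the two simultaneous claims, valid for $|z-z_0|\le\delta$:
\[
(\mathrm{i})\ \ W_k(z)\in\mathcal{K}, \qquad (\mathrm{ii})\ \ |W_k(z)-z| \;\le\; \|\phi\|_{s\in\gamma,\,w\in\mathcal{K}}\sum_{j=k}^{n-1}|\Delta\gamma_j|.
\]
The base case $k=n$ is trivial since $W_n(z)=z$ and $|z-z_0|\le\delta<P$. For the inductive step, assume both hold at $k+1$. Then (i) at level $k+1$ legalizes plugging $W_{k+1}(z)$ into $\phi$, and the recursion gives
\[
|W_k(z)-z| \;\le\; |W_{k+1}(z)-z| + \bigl|\phi(\gamma(x_k^*),W_{k+1}(z))\bigr|\,|\Delta\gamma_k|,
\]
which combined with the inductive bound on $|W_{k+1}(z)-z|$ yields (ii) at level $k$. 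Using $\sum_{j=0}^{n-1}|\Delta\gamma_j|\le\int_a^b|\gamma'(x)|\,dx=\rho$ for a $C^1$ arc, (ii) implies $|W_k(z)-z|\le\rho\cdot\|\phi\|_{s\in\gamma,w\in\mathcal{K}}=\kappa$, and the triangle inequality $|W_k(z)-z_0|\le|W_k(z)-z|+|z-z_0|\le\kappa+\delta<P$ then gives (i) at level $k$. This is exactly where the careful choice of $\rho$ enters, and it is the one subtle point of the argument: without guaranteeing $\delta+\kappa<P$ in advance, the induction could break the moment an intermediate composite exited $\mathcal{K}$.

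Setting $k=0$ yields $|Y_n(z)-z|\le\kappa$ uniformly in $n$, in the partition, in the sample points $x_j^*$, and in $|z-z_0|\le\delta$, which is precisely the statement of Lemma~\ref{lma2A}. The main obstacle, as noted, is the self-referential nature of the bound: we need $W_{k+1}(z)\in\mathcal{K}$ to bound $\phi$, but we only learn that $W_{k+1}(z)\in\mathcal{K}$ from the very bound we are trying to prove. Interlocking the two claims (i) and (ii) in a single induction resolves this circularity.
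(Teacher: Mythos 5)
Your proof is correct and is essentially the paper's argument: both peel off the outermost composite, use the induction hypothesis together with the choice $\delta+\kappa<P$ to keep the inner (tail) composition inside $\mathcal{K}$, and close with the triangle inequality and the bound of chord sums by arc length. The only difference is bookkeeping---you run a downward induction on the tail index of a fixed partition with the bound $\|\phi\|\sum_{j\ge k}|\Delta\gamma_j|$, while the paper inducts on the number of partition points over sub-arcs $\gamma^*\subseteq\gamma$ with the bound $\rho^*\|\phi\|$---and your explicit interlocking of the containment claim with the quantitative estimate is a slightly cleaner way of handling the circularity the paper treats implicitly.
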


\begin{proof}
We will prove this result by induction. But in doing so we must be very clear about what we will prove by induction. For all differentiable arcs $\gamma^*:[a^*,b^*] \to \mathcal{S}$ such that $\len(\gamma^*) = \rho^* \le \rho$ and $\gamma^* \subseteq \gamma$, and for all $|z-z_0| \le \delta$, and for all partitions $\{x_j\}_{j=0}^n$ of $[a^*,b^*]$,

\[
|\OmSum_{j=0}^{n-1} z + \phi(\gamma^*(x_j^*),z)\Delta \gamma_j^* \bullet z -z| \le \rho^* ||\phi(s,z)||_{s \in \gamma, z \in \mathcal{K}}
\]

In order to prove this result we go by induction on $n$, the length of the partition. When $n=1$ we are proving,

\[
||\phi(\gamma^*(x_0^*),z)(\gamma^*(b^*) - \gamma^*(a^*))||_{|z-z_0| \le \delta} \le \rho^* \cdot ||\phi(s,z)||_{s \in \gamma, z\in \mathcal{K}}
\] 

Since the shortest distance between two points is a straight line, we must have $|\gamma^*(b^*)-\gamma^*(a^*)|\le \rho^*$. The supremum norm handles the rest. This takes care of the case $n = 1$. Assume the case for $n$ and work on the case $n+1$.

The expression below describes the entire proof,

\begin{eqnarray*}
\OmSum_{j=0}^{n}z + \phi(\gamma^*(x_j^*),z)\Delta \gamma^*_j \bullet z = \OmSum_{j=1}^{n}z + \phi(\gamma^*(x_j^*),z)\Delta \gamma^*_j\bullet z\\ + \phi(\gamma^*(x_0^*),\OmSum_{j=1}^{n}z + \phi(\gamma^*(x_j^*),z)\Delta \gamma^*_j\bullet z)(\gamma^*(b^*) - \gamma^*(x_1^*))\\
\end{eqnarray*}

Using the induction hypothesis,

\[
|\OmSum_{j=1}^{n}z + \phi(\gamma^*(x_j^*),z)\Delta \gamma^*_j \bullet z - z| \le \rho_{-}^* \cdot ||\phi(s,z)||_{s \in \gamma, z\in \mathcal{K}}
\]

Where $\rho_{-}^{*}$ is the arc length from $\gamma^*(x_1^*)$ to $\gamma^*(a^*)$, and which since there are $n$ terms in the partition, the inequality is satisfied. Call $\rho_{+}^*$ the length of the arc from $\gamma^*(b^*)$ to $\gamma^*(x_1^*)$, then $\rho_{-}^* + \rho_{+}^* = \rho^*$. 

Lastly, the term $\OmSum_{j=1}^{n}z + \phi(\gamma^*(x_j^*),z)\Delta \gamma^*_j\bullet z$ lives in the compact set $\mathcal{K}$ because the set $|z-z_0| \le \delta + \kappa$ resides within $\mathcal{K}$ by construction. Therefore by the same argument as before, since,

\[
||\phi(\gamma^*(x_0^*),z)(\gamma^*(b^*) - \gamma^*(x_1^*))||_{|z-z_0| \le \delta + \kappa} \le \rho_{+}^*||\phi(s,z)||_{s \in \gamma, z\in \mathcal{K}}
\]

By the triangle inequality the result is proven and for all $n \in \mathbb{N}$,

\[
|\OmSum_{j=0}^{n-1} z + \phi(\gamma^*(x_j^*),z)\Delta \gamma_j^* \bullet z -z| \le \rho^* ||\phi(s,z)||_{s \in \gamma, z \in \mathcal{K}}
\]

Which gives the result,

\[
||Y(z) - z||_{|z-z_0|\le\delta} \le \kappa = \rho ||\phi(s,z)||_{s \in \gamma, z \in \mathcal{K}}
\]

\end{proof}

With this result we are halfway there. In fact, by normality, we must have sequences in $\mathcal{F}$ which converge uniformly on $|z-z_0|\le\delta$. This nearly gives the result. It tells us for some partitions of $[a,b]$ this expression converges uniformly, and $Y_\gamma(z)$ is a holomorphic function when $|z-z_0| \le \delta$; though not necessarily unique. Different limits of partitions may approach different functions.

This provides local uniform convergence for specific choices of partitions. And further, since $\kappa$ can be made arbitrarily small by choosing arbitrarily small arcs, we must have $Y_\gamma : \mathcal{K} \to \mathcal{G}$ for every compact set $\mathcal{K}$, where the length of $\gamma$ depends on $\mathcal{K}$. By the product decomposition $\gamma = \gamma_1 \bullet \gamma_2 \bullet ... \bullet \gamma_n$, we would like to have $Y_\gamma : \mathcal{G} \to \mathcal{G}$ for arbitrary arcs. This proves fairly impossible.

To give a more intuitive picture of what's going on; let $y(x) : [a,b] \to \mathbb{C}$ be the unique solution to the equation,

\[
y(x) = z + \int_a^x \phi(\gamma(u),y(u))\gamma'(u)\,du
\]

Then the function $Y_\gamma(z)$ in theory should be given by taking $y(b) = Y_\gamma(z)$. It is in no way obvious that this function lives in $\mathcal{G}$. But, if the arcs $\gamma$ are small enough, then since $\mathcal{G}$ is open, it is always possible for $Y_\gamma$ to reside in $\mathcal{G}$. Sadly though, as we begin to decompose $\gamma = \gamma_1 \bullet \gamma_2 \bullet ... \bullet \gamma_n$, where $\gamma$ is of arbitrary length and $\gamma_j$ is arbitrarily small, we may run into problems because we may begin to leave $\mathcal{G}$.

This can be better exemplified by the following simple case. Suppose we take $\mathcal{G}$ and $\mathcal{S}$ to be the unit disk $\mathbb{D}$. Let's also take the arc $\gamma:[0,1/2] \to \mathbb{D}$ to be the line $[0,1/2]$. Let's let $\phi(s,z) = z$. Then surely $\phi:\mathbb{D} \to \mathbb{D}$. But, by an old limit formula of Bernoulli and Euler,

\begin{eqnarray*}
Y_\gamma(z) &=& \int_\gamma z \,ds \bullet z\\
&=& \int_0^{\frac{1}{2}} z\,dx \bullet z\\
&=& \lim_{n\to\infty} \OmSum_{j=0}^{n-1} z + \frac{z}{2n}\bullet z\\
&=& z \lim_{n\to\infty} \prod_{j=0}^{n-1} \big{(}1 + \frac{1}{2n}\big{)}\\
&=& z \lim_{n\to\infty} \big{(}1 + \frac{1}{2n}\big{)}^n\\
&=& \sqrt{e} \cdot z\\
\end{eqnarray*}

It is no hard fact to deduce that $Y_\gamma$ does not take $\mathbb{D} \to \mathbb{D}$. So it is hopeless in general to get $Y_\gamma$ to take $\mathcal{G} \to \mathcal{G}$; unless we were to impose some extraneous condition. If we wanted to exploit the algebra of contours to their full potential we would need $Y_\gamma : \mathcal{G} \to \mathcal{G}$. And so, for our purposes, in order to do this we will eventually set $\mathcal{G} = \mathbb{C}$. But for the moment we prove convergence of the infinitely nested compositions for small arcs on arbitrary domains.\\

Again, to get a good look at what our infinitely nested compositions look like we would like to compare it to a sum. The following argument can be traced back to the work of John Gill; or at least the author learned it by way by him. Although he has used the argument multiple times, the author solely references its appearance in \cite{VirtInt}. The author has modified the conditions, and hence some of the subtleties; but the argument remains inspired by his. If we denote,

\[
z_{kn} = \OmSum_{j=k}^{n-1} z + \phi(\gamma(x_j^*),z)\Delta \gamma_j\,\bullet z
\]

With the identification that $z_{nn} = z$, and $z_{0n} = Y_n(z)$. Then, by expanding the nested composition,

\[
Y_n(z) = z + \sum_{k=0}^{n-1} \phi(\gamma(x_k^*),z_{(k+1)n})\Delta \gamma_k
\]

From this identity, and the fact $z_{kn}$ is normal in $n$ for all $k$ while $|z-z_0| \le \delta$, we essentially have the result. All that is required is to modify the proof that the usual line integral converges for holomorphic functions. This is a bit subtle, but not too difficult.

If we think of $\tau_{kn}(z) = \phi(\gamma(x_k^*),z_{(k+1)n})$ as a sample point, and rewrite this expression as,

\[
Y_n(z) = z + \sum_{k=0}^{n-1} \tau_{kn}(z) \Delta \gamma_k
\]

The result appears a lot more obvious. One can filter out all the partitions in which $|z_{(k+1)n} - z_{kn}| < \frac{M}{n}$ for all $k$ for large enough $n$ for some $M > 0$. From this, $|\tau_{(k+1)n}(z) - \tau_{kn}(z)| < \frac{M'}{n}$ for all $|z-z_0| \le \delta$ and some $M' > 0$. We can now think of this as an integral of a function of bounded variation. Or, as an implicit definition of a curve and an integral over a curve in $\mathbb{C}$. The author suggests following John B. Conway and the proof of the existence of the complex integral as presented in \cite{Con}.

Convergence must be uniform for $|z-z_0|\le\delta$ thanks to normality. The result is shown by noticing $\gamma'$ is integrable over $[a,b]$. With that, we have shown the main result of this section.

\begin{theorem}[The Convergence Theorem]\label{thmC2A}
Let $\mathcal{S}$ be a domain in $\mathbb{C}$, and let $\mathcal{G}$ be a domain in $\mathbb{C}$. Suppose $\phi(s,z):\mathcal{S} \times \mathcal{G} \to \mathcal{G}$ is a holomorphic function. For every compact set $\mathcal{K} \subset \mathcal{G}$ and every compact set $\mathcal{B} \subset \mathcal{S}$, there exists a $\rho>0$, such for all continuously differentiable arcs $\gamma:[a,b] \to \mathcal{B}$ with $\len(\gamma) \le \rho$, the contour integral,

\[
Y_\gamma(z)=\int_\gamma \phi(s,z)\,ds \bullet z = \int_a^b \phi(\gamma(x),z)\,d\gamma \bullet z
\]

Converges uniformly for $z \in \mathcal{K}$ and $\gamma \subset \mathcal{B}$, to a function $Y_\gamma : \mathcal{K} \to \mathcal{G}$.
\end{theorem}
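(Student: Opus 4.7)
The plan is to combine Lemma \ref{lma2A} with a telescoping identity that turns the nested composition into a Riemann--Stieltjes sum, then use the integral equation to pin down the limit. First I would reduce to the local case: cover $\mathcal{K}$ by finitely many closed disks $\{|z-z_i|\le\delta_i\}$ strictly contained in disks $\{|z-z_i|\le P_i\}\subset\mathcal{G}$, apply Lemma \ref{lma2A} on each to extract a $\rho_i>0$, and take $\rho=\min_i\rho_i$. For any $\gamma\subset\mathcal{B}$ of length at most $\rho$, the lemma then guarantees that every partial composition stays in a fixed compact set inside $\mathcal{G}$ and that the family of partial compositions is normal on each local disk, uniformly over such $\gamma$.

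Next, for a partition $\{x_j\}_{j=0}^n$ I would set $z_{kn}=\OmSum_{j=k}^{n-1}z+\phi(\gamma(x_j^*),z)\Delta\gamma_j\bullet z$, so $z_{nn}=z$ and $z_{0n}=Y_n(z)$. The recursion $z_{kn}=z_{(k+1)n}+\phi(\gamma(x_k^*),z_{(k+1)n})\Delta\gamma_k$ telescopes to
\[
Y_n(z)=z+\sum_{k=0}^{n-1}\tau_{kn}(z)\Delta\gamma_k,\qquad \tau_{kn}(z):=\phi(\gamma(x_k^*),z_{(k+1)n}).
\]
Lemma \ref{lma2A} confines each $z_{(k+1)n}$ to a compact set on which $\phi$ is bounded, so the samples $\tau_{kn}$ are uniformly bounded in $k$, $n$ and $z$; and $|z_{(k+1)n}-z_{(k+2)n}|\le C|\Delta\gamma_{k+1}|$ from the same lemma, together with the holomorphy of $\phi$ in both variables, promotes this to $|\tau_{(k+1)n}(z)-\tau_{kn}(z)|=O(\text{mesh})$ for $|z-z_i|\le\delta_i$. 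This is exactly the equicontinuity that makes the right-hand sum behave like an ordinary Riemann--Stieltjes sum against the integrable differential $d\gamma$, and the standard existence proof for the complex line integral (following Conway) applies verbatim.

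Finally I would identify the limit and upgrade to uniform convergence. Normality supplies a subsequential uniform limit $Y(z)$; interpolating the $z_{kn}$ linearly in $x$ gives Euler polygons whose limit $y(x)$ satisfies
\[
y(x)=z+\int_a^x\phi(\gamma(u),y(u))\gamma'(u)\,du,
\]
and by choosing $\rho$ small enough that the Lipschitz constant of $\phi$ in $z$ (on the compact set to which Lemma \ref{lma2A} confines the iterates) times $\rho$ is strictly less than one, this integral equation has a unique solution. Hence every uniformly convergent subsequence has the same limit $Y_\gamma(z)=y(b)$, normality upgrades this to uniform convergence of the full family of partial compositions, and Weierstrass delivers holomorphy of $Y_\gamma$. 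The uniformity in $\gamma\subset\mathcal{B}$ is automatic because every constant in the argument depends on $\gamma$ only through $\|\phi\|_{\mathcal{B}\times\mathcal{K}}$ and $\rho$. The main obstacle is precisely this uniqueness step: a priori, different partitions or sample choices $x_j^*$ could yield different limits, and the telescoped representation is the pivot which permits comparing any two such schemes as Euler approximations of one and the same ODE.
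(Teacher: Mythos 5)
Your proposal follows essentially the same route as the paper: Lemma \ref{lma2A} supplies normality of the partial compositions, the telescoped identity $Y_n(z)=z+\sum_{k}\tau_{kn}(z)\Delta\gamma_k$ converts the nested composition into a Riemann--Stieltjes-type sum, and the existence argument is completed by appeal to the standard proof for the complex line integral following Conway. Your explicit identification of the limit as the unique solution of the integral equation (via the contraction estimate making $\rho$ times the Lipschitz constant less than one) fills in the partition-independence step that the paper only sketches, but this is a refinement of the same argument rather than a genuinely different route.
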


The author would again like to thank John Gill for this part of the argument. John worked under slightly different considerations, and phrased the argument differently, but ultimately proved the same thing. He worked with arbitrary functions in $\mathbb{C} \to \mathbb{C}$, not necessarily holomorphic. He also worked with a less general construction of partitions, and arcs--and the differential relationship was phrased differently. Nonetheless he proved his result in essentially the same manner. The only piece of real novelty in our proof was the normality condition, which allowed for local uniform convergence--as is required to preserve holomorphy.

We can confidently say now, when defining,

\[
\int_{a}^w \phi(s,z)\,ds\bullet z = g(w,z)\\
\]

That $g(w,z)$ is holomorphic in both variables, so long as $z$ is restricted to a neighborhood, and $|w-a| < \rho$ is small enough. Which, this result is intended to imply that, locally, everything is holomorphic.\\

From this point out, we will assume that $\phi(s,z):\mathcal{S} \times \mathbb{C} \to \mathbb{C}$. The function $\phi$ will be entire in $z$ for the remainder of this paper. To illustrate what happens from this point it helps to place a series of somewhat anomalous examples. So to that end, let $\gamma: [a,b]\to\mathcal{S}$ be an arc and consider,

\[
Y_\gamma(z) = \int_\gamma z^2 \,ds\bullet z = \int_a^b z^2 \gamma'(x)\,dx\bullet z\\
\]

Now, unpacking this equation we are given the closed form expression for $Y_\gamma$,

\[
Y_\gamma(z) = \frac{1}{\displaystyle \frac{1}{z} + \gamma(a) - \gamma(b)}\\
\]

Which follows by differentiating by $b$, and noticing this expression satisfies,

\[
\frac{d}{db} Y_\gamma(z) = Y_\gamma(z)^2 \gamma'(b)\\
\]

And the initial condition,

\[
Y_\gamma(z)\Big{|}_{b = a} = z\\
\]

By the uniqueness of first order differential equations, these expressions are equivalent. Though we have not shown this differential relationship yet; bear with the author as our gut should say something of this nature happens. From this reduction though: if $\frac{1}{z} = \gamma(b) - \gamma(a)$ then we get a pole in $Y_\gamma$. So, it has poles in $z$ depending on $\gamma$. What we want to say is that this is \emph{sort of} the worst we can get when our integrand is holomorphic.  This proves to corrupt what would otherwise be a simple theory.

It helps to add even more anomalous examples. And the second kind of anomaly is more indicative of the true nature of these things. Take the function $\phi(s,z) = z^3$. Then,

\[
Y_\gamma = \int_\gamma z^3\,ds\bullet z = \frac{1}{\displaystyle\sqrt{\frac{1}{z^{2}} + 2\gamma(a) - 2\gamma(b)}}\\
\]

Because,

\[
\frac{d}{db}Y_\gamma = Y_\gamma^3 \gamma'(b)\\
\]

And the initial condition $Y_\gamma \Big{|}_{b=a} = z$. This doesn't look promising. We have not just poles, but actual branch cuts. There's nothing worse to a complex analyst than branch-cuts. It doesn't end there. Just as anomalous,

\[
Y_\gamma = \int_\gamma e^{-z}\,ds\bullet z = \log(e^z + \gamma(b)-\gamma(a))\\
\]

Because,

\[
\frac{d}{db}Y_\gamma = e^{\displaystyle -Y_\gamma} \gamma'(b)\\
\]

With the same initial condition $Y_\gamma \Big{|}_{b=a} = z$. Here is where we start to see the shape of these things. Obviously $Y_\gamma$ in general cannot be meromorphic. It cannot converge everywhere to a meromorphic function because intrinsically the above functions have branches. But we can't really talk about branches. Insofar as $Y_\gamma(z)$ is a point-wise defined function.

If we were to rewrite this $Y_\gamma$ we can come a little closer to a more manageable form.

\[
Y_\gamma(z) = z + \log \big{(}1+\frac{\gamma(b) - \gamma(a)}{e^z}\big{)}\\
\]

Wherein this form of the equation satisfies $Y_\gamma \Big{|}_{b=a} = z$, without the pesky need for a lot of branch-cut talk. However, we are still guaranteed that this thing doesn't converge when $z = \log(\gamma(a) - \gamma(b)) + 2 \pi i k$ for $k\in \mathbb{Z}$, and cannot in any neighborhood of these points.

All that can be said is that there are definitely points of non-convergence in $Y_\gamma$. This seems to provide a contradiction because for every neighborhood in $z$ we can define contour integrals $Y_\gamma$ where $\len(\gamma) \le \rho$. And we can compose this with other contour integrals in this neighborhood $\gamma = \gamma_1 \bullet \gamma_2\bullet...\bullet \gamma_n$; to make $\len(\gamma)$ of arbitrary length. Naturally because $\phi(s,z)$ is entire in $z$ we only hit problems when we hit an infinite value. This proves to be a very deep foundational issue.

The author, despite his best attempts to clear the air, still gets confused about this, but he is sure of one thing. We can always find a neighborhood in $z$ where $Y_\gamma$ converges for arbitrary $\gamma$. In fact, we can cover the plane with such $z$ up to a set of measure zero in $\mathbb{C}$. And this is how we'll think about it. There is an open set $\mathcal{D} \subset \mathbb{C}$ (not necessarily connected) such that $Y_\gamma: \mathcal{D} \to \mathbb{C}$. This should be fairly intuitive; the only requirement of difficulty is that $\overline{\mathcal{D}} = \mathbb{C}$. So, we have convergence almost everywhere.

To those familiar with the eccentricities of Complex Dynamics; we can think about $\mathcal{D}$ as the Fatou set. It is the set in which our partial compositions are normal; where the infinitely nested compositions are normal. Then $\mathbb{C}/\mathcal{D}$ is a measure zero Julia set; some irregular (probably fractal) measure zero set in $\mathbb{C} = \mathbb{R}^2$; where our partial compositions are \emph{not} normal. By way of this, we split the plane into two sets $\mathcal{D}$ and $\mathbb{C}/\mathcal{D}$; where $\mathcal{D}$ is a domain of normality, the other isn't. And just as necessarily $\mathcal{D} = \mathcal{D}(\gamma)$. The set $\mathcal{D}$ varies as we let $\gamma$ vary.

We will not need these details very much. The author is not leveraging this entire paper on this analysis; however it's a good way to paint a picture of what these things begin to look like. The better way, and the correct way, is to think of $Y_\gamma$ as a \emph{sheaf} defined locally in $z$. We won't be calling on things like Liouville's Theorem, Weierstrass Product Formula, or Mittag-Leffler's Theorem; or other properties of functions holomorphic/meromorphic on $\mathbb{C}$; so we can play a little fast and loose. Our discussion shall be entirely based upon local behaviour in $z$. In many instances, we will not see a clean formula, but instead equivalences between types of contour integrals. This is the point of interest; the ways these things equal each other; less-so the nature of convergence; or the where or when. The only principle we call upon gregariously is The Identity Theorem--which is certainly a local property.

For the same reasons now, if we allow $\mathcal{G} = \mathbb{C}$, so that our domain is unbounded, then The Convergence Theorem \ref{thmCVG} can be expanded into a much more complex beast. First, by noticing $\gamma = \gamma_1 \bullet \gamma_2 \bullet ... \bullet \gamma_n$ where $\gamma_j$ is arbitrarily small; and since our domains are unbounded in any direction; we will not run into any undefined compositions. However, we may encounter infinities. The sole argument we need to make, is that these domains of non-convergence are managed by the differential properties of $Y_\gamma$. So as we take small arcs and compose them together, we no longer hit a boundary where our functions are not necessarily undefined; but our differential equation may force our integral to blow up to infinity; and locally our compositions look like solutions to these differential equations.

\begin{theorem}\label{thmDNSCVG}
Let $\mathcal{S}$ be a domain in $\mathbb{C}$. Suppose $\phi(s,z):\mathcal{S} \times \mathbb{C} \to \mathbb{C}$ is a holomorphic function. For all continuously differentiable arcs $\gamma:[a,b] \to \mathcal{S}$, the contour integral,

\[
Y_\gamma(z)=\int_\gamma \phi(s,z)\,ds \bullet z = \int_a^b \phi(\gamma(x),z)\,d\gamma \bullet z
\]

Converges uniformly on compact subsets of $\mathcal{D} = \mathcal{D}(\gamma)\subset\mathbb{C}$, to a holomorphic function $Y_\gamma : \mathcal{D} \to \mathbb{C}$. Such that $\overline{\mathcal{D}} = \mathbb{C}$.
\end{theorem}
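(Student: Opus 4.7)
My plan is to bootstrap the local Convergence Theorem \ref{thmC2A} using the semigroup decomposition $\gamma = \gamma_1 \bullet \gamma_2 \bullet \cdots \bullet \gamma_n = \OmSum_{j=1}^{n} \gamma_j$ into arbitrarily small subarcs. Since $\mathcal{G} = \mathbb{C}$ is unbounded, no composition can fail by leaving the codomain; it can only fail by producing an infinite value. The whole task is therefore to carve out the set $\mathcal{D}(\gamma)$ of starting points $z$ for which no such blow-up occurs under some sufficient refinement of the partition, and to show that this set is open and dense in $\mathbb{C}$.

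Concretely, I fix $z_0 \in \mathbb{C}$ and a closed disk $\mathcal{K}_0$ centered at $z_0$, and process $\gamma$ from its tail outward. Apply Theorem \ref{thmC2A} to $\gamma_n$, refining it if necessary so that its length is below the $\rho$ associated to $\mathcal{K}_0$; this yields a holomorphic $Y_{\gamma_n} : \mathcal{K}_0 \to \mathbb{C}$. Pass to the compact image $\mathcal{K}_1 = Y_{\gamma_n}(\mathcal{K}_0)$; if it is bounded, refine $\gamma_{n-1}$ and apply Theorem \ref{thmC2A} once more on $\mathcal{K}_1$; iterate. If every intermediate $\mathcal{K}_j$ remains bounded, this procedure terminates after $n$ steps in a composite $Y_\gamma = Y_{\gamma_1} \circ \cdots \circ Y_{\gamma_n}$ holomorphic on $\mathcal{K}_0$, with uniform convergence inherited from the uniform convergence at each stage and holomorphy of the limit following from Weierstrass. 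I then define $\mathcal{D}(\gamma)$ to be the union over all such $z_0$ of the corresponding $\mathcal{K}_0$; by construction, $\mathcal{D}$ is open, $Y_\gamma : \mathcal{D} \to \mathbb{C}$ is holomorphic, and convergence is uniform on compact subsets.

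The genuinely hard part is showing $\overline{\mathcal{D}(\gamma)} = \mathbb{C}$. Its complement consists of those $z_0$ for which, under every refinement, some intermediate image escapes to infinity --- equivalently, the $z_0$ at which the holomorphic ODE $y'(x) = \phi(\gamma(x), y(x)) \gamma'(x)$ with $y(a) = z_0$ develops a singularity before $x = b$. The worked examples $\phi = z^2, z^3, e^{-z}$ discussed after Theorem \ref{thmC2A} realize this set as isolated poles, branch points, or a countable family of logarithmic singularities --- in each case, nowhere dense. To prove the general case, my plan is an argument by contradiction: suppose an open disk $\mathcal{U}$ lay entirely in $\mathbb{C} \setminus \mathcal{D}$; by local Cauchy--Kovalevskaya theory for analytic ODEs, the blow-up time $x_\ast(z_0) \in [a,b]$ would depend holomorphically (hence continuously) on $z_0 \in \mathcal{U}$, so I could find $\eta > 0$ and a sub-disk $\mathcal{U}' \subset \mathcal{U}$ on which the solution stays uniformly bounded for $x \in [a, x_\ast - \eta]$; extending across $x_\ast$ by local continuation would then place points of $\mathcal{U}'$ back into $\mathcal{D}$, contradicting $\mathcal{U} \subset \mathbb{C} \setminus \mathcal{D}$. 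This compactness/continuation step --- converting the intuition that ``the blow-up locus is a Julia-type measure-zero set'' into a bona fide density statement --- is where the real analytic work of the proof resides.
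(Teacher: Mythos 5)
Your first two paragraphs track what the paper actually does: the paper's own ``proof'' is only a deferral --- it invokes the decomposition $\gamma = \gamma_1 \bullet \cdots \bullet \gamma_n$ into short arcs handled by Theorem \ref{thmC2A}, identifies $Y_\gamma$ with the solution of $y' = \phi(\gamma(x),y)\gamma'(x)$ via Theorem \ref{thmDIFF}, and then asserts that the blow-up locus of such an ODE is ``standard work'' and has measure zero. Your construction of $\mathcal{D}$ as the open set of initial points whose iterated images $\mathcal{K}_0 \mapsto \mathcal{K}_1 \mapsto \cdots$ all stay bounded is a faithful and somewhat more careful rendering of that first half, and the openness and local uniform convergence claims there are sound.

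The gap is in the density argument, which is precisely the part the paper itself never proves. Two steps fail as stated. First, the blow-up time $x_\ast(z_0)$ takes values in the real interval $[a,b]$, so it cannot ``depend holomorphically'' on the complex variable $z_0$ unless it is constant; there is no Cauchy--Kovalevskaya statement giving holomorphy of a real-valued escape time, and even continuity is not automatic (the maximal existence time of an ODE is in general only lower semicontinuous in the initial datum). Second, the phrase ``extending across $x_\ast$ by local continuation'' has no content here: at $x = x_\ast$ the solution tends to $\infty$, and since $\phi(s,\cdot)$ is merely entire (typically with an essential singularity at $\infty$), the vector field does not extend to the Riemann sphere, so there is no local solution through $\infty$ to continue along. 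The contradiction you aim for therefore does not materialize. To be clear, this does not put you behind the paper --- the paper dodges exactly this point by fiat --- but if you want a genuine proof of $\overline{\mathcal{D}(\gamma)} = \mathbb{C}$ you need a different mechanism, e.g.\ a quantitative argument that the set of $z_0$ escaping every compact set before time $b$ is contained in a countable union of images of thin sets under the (locally injective, by Theorem \ref{thmNONZERO}) partial flows, or a measure-transport argument along the flow; the semicontinuity of the existence time alone only gives that $\mathbb{C}\setminus\mathcal{D}$ is closed in a relative sense, not that it has empty interior.
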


\begin{proof}
The proof of this fact will be belayed slightly. Init, what we especially need is that,

\[
\frac{d}{d b} Y_\gamma(z) = \phi(\gamma(b),Y_\gamma(z))\gamma'(b)\\
\]

With the initial condition $Y_\gamma(z)\big{|}_{b=a} = z$. From here, all that's needed is that differential equations with holomorphic integrands can at worst have a set of discontinuities $z \in \mathcal{Y}$ such that $\mathcal{Y}$ is of measure-zero in $\mathbb{C}$. This is standard work. See The Differential Theorem \ref{thmDIFF} below for a complete proof of the above equation. Both theorems are then proved in tandem; if the reader care to observe.
\end{proof}

The reader should note now, directly, that the author has absolutely no clue what $\mathcal{D}$ is. He only knows it's there. But it is irrelevant to the discussion to follow. All we need to know from this is that $Y_{\gamma_1}(z)$ is holomorphic on $\mathbb{C}$ minus a set of measure-zero; and so is $Y_{\gamma_2}$; and therefore $Y_{\gamma_1} \bullet Y_{\gamma_2}$ is also holomorphic on $\mathbb{C}$ minus a set of measure-zero. And $Y_{\gamma_1} \bullet Y_{\gamma_2} = Y_{\gamma_1\bullet \gamma_2}$. So we can now think of equivalence almost everywhere when making compositions.\\

The second way of viewing convergence, and the more grounded way, is focused on fixed-points. Supposing additionally $\phi(s,z_0) = 0$ for some $z_0 \in \mathbb{C}$ for all $s \in \mathcal{S}$. Then of this course, looking at the partial compositions, ${\displaystyle \int_\gamma \phi(s,z)\,ds\bullet z \Big{|}_{z= z_0}} = z_0$. Consequently, we can always find a neighborhood of $z_0$ such that ${\displaystyle \int_\gamma \phi(s,z)\,ds\bullet z}$ is holomorphic.

To ratify the discussion; if $\phi(s,z_0) = 0$, then there exists a neighborhood $\mathcal{U} = \{|z-z_0|<\delta\}$ such $\int_\gamma \phi(s,z)\,ds\bullet z$ is holomorphic on $\mathcal{U}$ for $\len(\gamma) \le \rho$ as small as possible. Then compositions $\gamma = \gamma_1 \bullet \gamma_2 \bullet \cdots \bullet \gamma_n$ exist as long as we shrink $\mathcal{U}$ to as small as necessary so the compositions are valid. This is to say that a domain of normality exists around $z_0$ where $\phi(s,z_0) = 0$ for all $\gamma$. In this language we can write,

\begin{theorem}\label{thmLOCAL}
Let $\mathcal{S}$ be a domain in $\mathbb{C}$. Suppose $\phi(s,z):\mathcal{S} \times \mathbb{C} \to \mathbb{C}$ is a holomorphic function. Additionally, assume that $\phi(s,z_0) = 0$ for some $z_0 \in \mathbb{C}$. For a continuously differentiable arc $\gamma:[a,b] \to \mathcal{S}$, the contour integral,

\[
Y_\gamma(z)=\int_\gamma \phi(s,z)\,ds \bullet z = \int_a^b \phi(\gamma(x),z)\,d\gamma \bullet z
\]

Converges uniformly in a neighborhood $|z-z_0| < \delta$ for some $\delta > 0$.
\end{theorem}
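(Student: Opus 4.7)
The plan is to exploit the fixed point $z_0$ to keep every partial composition in a bounded region, and then invoke the local convergence theorem already in hand. Since $\phi$ is entire in $z$ and $\phi(s,z_0)=0$ for every $s\in\mathcal{S}$, we may factor
\[
\phi(s,z) = (z-z_0)\,\psi(s,z),
\]
where $\psi$ is holomorphic on $\mathcal{S}\times\mathbb{C}$. Fix a compact neighborhood $\mathcal{B}\subset\mathcal{S}$ of $\gamma$ and a closed disk $\overline{\mathcal{U}}=\{|z-z_0|\le R\}$, and set $M=\|\psi(s,z)\|_{\mathcal{B},\overline{\mathcal{U}}}$. Let $L$ be an upper bound for the polygonal length $\sum_k|\Delta\gamma_k|$ over all partitions we will consider (e.g.\ $L=\mathrm{len}(\gamma)+1$).

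Next I would show by a backward induction on the partition index the \emph{multiplicative} estimate
\[
|z_{k,n}-z_0| \;\le\; |z-z_0|\prod_{j=k}^{n-1}\bigl(1+M\,|\Delta\gamma_j|\bigr),
\]
where $z_{k,n}$ are the intermediate nodes of Theorem \ref{thmC2A}'s Riemann composition. The base case $z_{n,n}=z$ is trivial. For the step, assume $z_{k+1,n}\in\overline{\mathcal{U}}$; using the factorization,
\[
z_{k,n}-z_0 \;=\; (z_{k+1,n}-z_0)\bigl(1+\psi(\gamma(x_k^*),z_{k+1,n})\,\Delta\gamma_k\bigr),
\]
so $|z_{k,n}-z_0|\le|z_{k+1,n}-z_0|(1+M|\Delta\gamma_k|)$. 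Provided $|z-z_0|<\delta$ with $\delta:=R\,e^{-ML}$, the accumulated product is bounded by $e^{ML}$, so $|z_{k,n}-z_0|\le R$ for every $k$; this simultaneously verifies the inductive hypothesis $z_{k+1,n}\in\overline{\mathcal{U}}$ and gives the uniform bound
\[
\bigl\|\,Y_n(z)-z_0\,\bigr\|_{|z-z_0|\le\delta}\;\le\;R
\]
for every partition.

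Having trapped every partial composition inside the compact disk $\overline{\mathcal{U}}$, I would then transfer convergence to the full arc. Decompose $\gamma=\gamma_1\bullet\gamma_2\bullet\cdots\bullet\gamma_m$ with $\mathrm{len}(\gamma_j)\le\rho$, where $\rho$ is the threshold supplied by Theorem \ref{thmC2A} for the compact pair $(\mathcal{B},\overline{\mathcal{U}})$. Each $Y_{\gamma_j}$ then converges uniformly on $\overline{\mathcal{U}}$, and the bound just proved shows that for $|z-z_0|<\delta$ the intermediate iterates $Y_{\gamma_{j+1}}\circ\cdots\circ Y_{\gamma_m}(z)$ all live inside $\overline{\mathcal{U}}$, where the outer $Y_{\gamma_j}$ is defined and uniformly approximated. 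Composing uniformly convergent sequences on a compact set yields uniform convergence of $Y_\gamma(z)$ on $\{|z-z_0|<\delta\}$, and holomorphy is preserved by Weierstrass.

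The main obstacle is the one the factorization sidesteps: without the multiplicative bound, the additive estimate of Lemma \ref{lma2A} blows up as we cut $\gamma$ into finer sub-arcs and chain them, because $\|\phi\|_{\mathcal{B},\overline{\mathcal{U}}}$ does not shrink. Writing $\phi=(z-z_0)\psi$ is what converts the pathological additive drift into a controlled exponential factor $e^{ML}$, and this is precisely why the fixed-point hypothesis $\phi(s,z_0)=0$ is strong enough to guarantee a genuine neighborhood of convergence for arcs of arbitrary length.
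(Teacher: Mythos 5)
Your proof is correct, and it follows the same basic strategy the paper gestures at — exploit the fixed point $z_0$ of every partial composition, then chain small arcs via Theorem \ref{thmC2A} — but you supply the one quantitative ingredient the paper leaves entirely implicit. The paper's justification of Theorem \ref{thmLOCAL} is only the remark that $\int_\gamma \phi(s,z)\,ds\bullet z\big|_{z=z_0}=z_0$ and that one may ``shrink $\mathcal{U}$ as necessary'' when composing $\gamma=\gamma_1\bullet\cdots\bullet\gamma_n$; as stated, that argument does not by itself rule out the neighborhood shrinking to nothing as the number of sub-arcs grows. Your factorization $\phi(s,z)=(z-z_0)\psi(s,z)$ and the resulting discrete Gronwall-type bound $|z_{k,n}-z_0|\le|z-z_0|\prod_j(1+M|\Delta\gamma_j|)\le|z-z_0|e^{ML}$ is exactly what closes that gap: it produces a single $\delta=Re^{-ML}$ valid for the \emph{whole} arc and for \emph{all} partitions simultaneously, which both establishes normality of the partial compositions on $|z-z_0|\le\delta$ and keeps every intermediate iterate inside $\overline{\mathcal{U}}$ so the small-arc limits can be composed. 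Two small points to tidy: the hypothesis should be read as $\phi(s,z_0)=0$ for \emph{all} $s\in\mathcal{S}$ (as the paper's surrounding discussion intends), since otherwise the factorization fails; and when you compose the uniform limits $Y_{\gamma_j}$ you should note, as in Theorem \ref{thmC2A}, that the limit over arbitrary partitions of $\gamma$ agrees with the composite of the limits over the pieces (refine any partition to include the subdivision points and use independence of the partition). With those caveats your argument is a strictly sharper version of the paper's.
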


From this version of the expansion; we can see that if $Y_\gamma$ is holomorphic on $\mathcal{U}$ a neighborhood of $z_0$; and similarly with $Y_\tau$ but with a neighborhood $\mathcal{U}'$. Then, $Y_\gamma \bullet Y_\tau = Y_{\gamma \bullet \tau}$ on $Y_\tau(\mathcal{U}') \cap \mathcal{U} = \mathcal{U}'' \neq \emptyset$ (because necessarily a neighborhood of $z_0$ is contained within $\mathcal{U}''$). At this point we can see the discussions of \textit{sheafs} are well at hand.

The first order expansion of $Y_\gamma$ in this instance is rather nice too,

\[
Y_\gamma(z) = z_0 + (z-z_0)e^{\displaystyle \int_\gamma \frac{\partial \phi}{\partial z}(s,z_0)\,ds} + \mathcal{O}(z-z_0)^2\\
\]

And there exists a recursive pattern to each Taylor coefficient about $z_0$, to brute force it is a bit unnecessary for this paper; but it's possible.

The sole purpose of this brief detour is to show an explicit example of normality; so that we are not all up in the air. We can think of this as a more strict form of the normal convergence we deliberated upon above. Where now we have an actual neighborhood rather than a nondescript domain.\\

The last thing we will state in this section is something implicitly proved in the above discussions of convergence. We got a little caught up in global properties, we missed out on more local opportunities. As in the above corollary, we will assume that $\phi(s,z): \mathcal{S} \times \mathbb{C} \to \mathbb{C}$. We will attempt to obtain a bound on,

\[
Y_\gamma(z) = \int_\gamma \phi(s,z)\,ds\bullet z\\
\]

Taking the partial compositions,

\[
z_{kn} = \OmSum_{j=k}^{n-1} z + \phi(\gamma(x_j^*),z)\,\Delta \gamma_j \bullet z\\
\]

Which satisfy,

\[
\int_\gamma \phi(s,z)\,ds\bullet z = z + \lim_{n\to\infty} \sum_{j=0}^{n-1} \phi(\gamma(x_j^*),z_{(j+1)n})\Delta \gamma_j\\
\]

Now on the compact set $\mathcal{K}$, if $Y_\gamma(z)$ is continuous there, there exists a bound $M$ such that $\sup_{j,n}||z_{jn}||_{z \in \mathcal{K}} \le M$. Call $\mathcal{L}$ a compact set such that all of its elements $z$ satisfy $|z| \le M$. Then, necessarily,

\begin{eqnarray*}
\Big{|}\Big{|}\int_\gamma \phi(s,z)\,ds\bullet z - z\Big{|}\Big{|}_{z \in \mathcal{K}} &\le& \lim_{n\to\infty} \sum_{j=0}^{n-1} ||\phi(\gamma(s_j^*),z)||_{z \in \mathcal{L}}|\Delta \gamma_j|\\
&\le& \int_\gamma ||\phi(s,z)||_{z\in\mathcal{L}}\,|ds|\\
\end{eqnarray*}

This allows us to make the following claim. This is the equivalent of a triangle inequality, and for all extensive purposes works as a triangle inequality. We'll call upon this principle frequently throughout this paper, but it'll be done with little reference to the following theorem. But, the reader should be able to spot it when we use it; and the author will try to be as explicit as possible.

\begin{theorem}[The Triangle Inequality Theorem]\label{thmTriInq}
Let $\mathcal{S}$ be a domain in $\mathbb{C}$. Suppose $\phi(s,z):\mathcal{S} \times \mathbb{C} \to \mathbb{C}$ is a holomorphic function. Let $\gamma:[a,b] \to \mathcal{S}$ be a continuously differentiable arc. If $\displaystyle{\int_\gamma \phi(s,z)\,ds\bullet z}$ is continuous on the compact disk $\mathcal{K} \subset \mathbb{C}$; there exists a larger compact disk $\mathcal{L} \subset \mathbb{C}$ such that $\mathcal{K} \subset \mathcal{L}$ and,

\[
\Big{|}\Big{|}\int_\gamma \phi(s,z)\,ds\bullet z - z\Big{|}\Big{|}_{z \in \mathcal{K}} \le \int_\gamma ||\phi(s,z)||_{z\in\mathcal{L}}\,|ds|\\
\]
\end{theorem}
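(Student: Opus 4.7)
The strategy is to exploit the sum-representation of the Riemann composition already derived in the preceding discussion, namely
\[
Y_n(z) = z + \sum_{k=0}^{n-1} \phi(\gamma(x_k^*), z_{(k+1)n})\,\Delta\gamma_k,
\]
where $z_{kn} = \OmSum_{j=k}^{n-1} z + \phi(\gamma(x_j^*),z)\Delta\gamma_j \bullet z$. This identity converts the nested composition into an ordinary finite sum, at the price of introducing the intermediate compositional states $z_{kn}$. The plan is then to bound each summand by a supremum of $\phi$ over a suitably large compact disk $\mathcal{L}$, sum, and pass to the limit to recover a Riemann sum for the desired scalar line integral.

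The principal step is to construct $\mathcal{L}$ so that $z_{kn}(z) \in \mathcal{L}$ for every $k,n$ whenever $z \in \mathcal{K}$. Since $Y_\gamma$ is continuous on the compact disk $\mathcal{K}$, the partial compositions $Y_n$ converge to $Y_\gamma$ uniformly on $\mathcal{K}$ and so form a uniformly bounded family there. But each $z_{kn}$ is itself a Riemann composition for $\phi$ along a sub-arc of $\gamma$ (those indices $j \ge k$), so an appeal to the Convergence Theorem \ref{thmC2A} together with a uniform normality argument in the spirit of Lemma \ref{lma2A}, taken across the continuous family of sub-arcs $\gamma|_{[c,b]}$ for $c \in [a,b]$, produces a constant $M > 0$ with $\sup_{k,n}\|z_{kn}\|_{z\in\mathcal{K}} \le M$. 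Take $\mathcal{L}$ to be the closed disk of radius $M$, enlarged if necessary so that $\mathcal{K} \subset \mathcal{L}$.

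With $\mathcal{L}$ in hand the rest is mechanical. The scalar triangle inequality applied termwise gives
\[
|\phi(\gamma(x_k^*), z_{(k+1)n})| \le \|\phi(\gamma(x_k^*), w)\|_{w \in \mathcal{L}},
\]
and summing yields
\[
\|Y_n(z) - z\|_{z \in \mathcal{K}} \le \sum_{k=0}^{n-1} \|\phi(\gamma(x_k^*), w)\|_{w \in \mathcal{L}}\,|\Delta\gamma_k|.
\]
The right-hand side is a Riemann sum for the arc-length integral $\int_\gamma \|\phi(s,w)\|_{w\in\mathcal{L}}\,|ds|$; continuity of $\phi$ on the compact product $\gamma \times \mathcal{L}$ makes the integrand continuous on $\gamma$, so the Riemann sum converges to the integral as $\Delta x_j \to 0$. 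The left-hand side converges to $\|Y_\gamma(z) - z\|_{z\in\mathcal{K}}$ by uniform convergence of $Y_n$ on $\mathcal{K}$, and the inequality passes to the limit.

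The hard part is genuinely the uniform boundedness of the intermediate states $z_{kn}$. Continuity of $Y_\gamma$ on $\mathcal{K}$ immediately controls only the terminal value of the composition; to control every interior value one must run the Lemma \ref{lma2A} normality argument uniformly over all initial sub-arcs simultaneously. The necessity of passing from $\mathcal{K}$ to a strictly larger disk $\mathcal{L}$ reflects exactly this: along the compositional trajectory from $z$ to $Y_\gamma(z)$ the orbit may bulge outside $\mathcal{K}$ before re-entering, and $\mathcal{L}$ must be large enough to enclose the whole excursion.
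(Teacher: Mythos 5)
Your proposal is correct and follows essentially the same route as the paper: the paper likewise expands the nested composition as $Y_n(z) = z + \sum_k \phi(\gamma(x_k^*), z_{(k+1)n})\Delta\gamma_k$, asserts a uniform bound $\sup_{j,n}\|z_{jn}\|_{z\in\mathcal{K}} \le M$ from continuity of $Y_\gamma$ on $\mathcal{K}$, takes $\mathcal{L}$ to be the disk of radius $M$, and passes the termwise bound to the arc-length integral in the limit. Your treatment of the uniform boundedness of the intermediate states via the normality argument over sub-arcs is in fact more careful than the paper's one-line assertion of the bound $M$.
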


\section{An extension of Cauchy's Integral Theorem}\label{secC3}
\setcounter{equation}{0}

In this section the following schema is used: $\mathcal{S}$ is a simply connected domain in $\mathbb{C}$ and $\phi(s,z) : \mathcal{S} \times \mathbb{C} \to \mathbb{C}$ is a holomorphic function. 

Simple connectivity is necessary in this section; everything will have to do with closed contours. A closed contour $\gamma$ will always be continuously differentiable--for the sake of transparency. We can begin by assuming that $\len(\gamma) \le \rho$ for small $\rho$; then upon completing this section, we can turn to Theorem \ref{thmDNSCVG}; and once this is justified; return to this section and repeat without the restriction $\len(\gamma) \le \rho$.

There are two things to prove in this section. But they are the same thing phrased in different manners. Define the function,

\[
y(x) = \int_a^x \phi(\gamma(u),z) d\gamma \bullet z
\]

Then in theory, this function satisfies the differential equation $y(a) = z$ and,

\[
y'(x) = \phi(\gamma(x),y(x))\gamma'(x)
\]

Which aligns with the case for the real-valued compositional integral. This written more explicitly becomes,

\[
y(x) = z + \int_a^x \phi(\gamma(u),y(u))\gamma'(u)\,du
\]

Now if we were to alter this to a contour notation, where $\gamma_ w:[a,x] \to \mathcal{S}$, $ w = \gamma(x)$ and $ w_0 = \gamma(a)$; this becomes the expression,

\[
g( w) = \int_{\gamma_ w} \phi(s,z)\,ds \bullet z
\]

Then the idea of this section is to prove the function $g( w)$ does not depend on the path $\gamma_ w$, but only on the end points $w$ and $ w_0$. That is to mean, it doesn't matter how we define this path; or what route we take. So long as it ends at $ w \in \mathcal{S}$ and begins at $ w_0 \in \mathcal{S}$, the resultant is the same. We find this rather naturally in Cauchy's analysis, but it's phrased with the use of a constant of integration. This is also to mean,

\[
\int_\gamma \phi(s,z) \,ds \bullet z = \int_\tau \phi(s,z)\,ds\bullet z
\]

If $\tau$ and $\gamma$ have the same endpoints and share the same orientation. This is certainly true for the example $\phi(s,z) = z^2$. Wherein,

\[
\int_\gamma z^2\,ds\bullet z = \frac{1}{\displaystyle \frac{1}{z} +  w_0 -  w}\\
\]

Which is distinctly only dependent on the end-points of the contour. This is so, regardless of the fact there is a pole somewhere in $z$.\\

The second, and more precise manner of phrasing this result is that we have something like Cauchy's Integral Theorem. This is to mean, if $\gamma$ is a closed contour in $\mathcal{S}$--$ w =  w_0$, then,

\[
\int_\gamma \phi(s,z)\,ds \bullet z = z
\]

So, as we have an algebra of contours across addition as Cauchy defined it, where a closed contour was essentially the identity (the value $0$); we have something similar here. Our algebra of contours is across composition, and is non-abelian; but a closed contour is still the identity. Namely, it is the function $z \mapsto z$. Or at least, once we evaluate the contour integral on some holomorphic function. This will set the stage for a lot of algebraic analysis.

To derive the first result from the second, note that $\gamma$ can always be decomposed as two contours $\gamma_1 \bullet \gamma_2$, and since,

\[
z = Y_\gamma(z) = Y_{\gamma_1 \bullet \gamma_2}(z) = Y_{\gamma_1}(Y_{\gamma_2})
\]

We must have,

\[
Y_{\gamma_2}^{-1} = Y_{\gamma_1}
\]

Where $\gamma_2$ shares the same endpoints as $\gamma_1$, but has opposite orientation; hence the functional inverse. Much care must be taken in making this statement though, as these are not necessarily biholomorphic functions of $\mathbb{C}$. An inverse does not necessarily exist. In this sense, we are meant to interpret these equations implicitly in neighborhoods of $z$. This equation is to be taken loosely.

To accent, this result is not very hard. When we combine these two intuitions the result is in front of us. If $\gamma$ is a closed contour then,

\[
Y_\gamma(z) = z + \int_\gamma \phi(s,g(s))\,ds
\]

And since $g'( w) = \phi( w,g( w))$, by Cauchy's Integral Theorem it must follow $Y_\gamma(z) = z$. Any function $h$ with an antiderivative satisfies $\int_\gamma h(s)\,ds = 0$. The key to all of this then relies in showing,

\[
g( w) = \int_{\gamma_ w} \phi(s,z)\,ds \bullet z
\]

satisfies,

\[
\frac{d}{d w} g( w) = \phi( w,g( w))
\]

With this all our intuition makes sense and becomes clear. We make these statements precise.

\begin{theorem}[The Differential Theorem]\label{thmDIFF}
Suppose $\mathcal{S}$ is a simply connected domain in $\mathbb{C}$. Let $\phi(s,z) : \mathcal{S} \times \mathbb{C} \to \mathbb{C}$ be a holomorphic function. Suppose $\gamma_ w:[a,x] \to \mathcal{S}$ is a continuously differentiable arc in $\mathcal{S}$. Let $ w = \gamma(x)$ and $ w_0 = \gamma(a)$. Then the function,

\[
g( w) = \int_{\gamma_ w} \phi(s,z)\,ds\bullet z
\]

Satisfies the differential equation; $g( w_0) = z$ and,

\[
\frac{d}{d w} g( w) = \phi( w,g( w))
\]
\end{theorem}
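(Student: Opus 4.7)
The plan is to derive the ODE by combining the semigroup/concatenation property $Y_{\gamma_1\bullet\gamma_2}=Y_{\gamma_1}\circ Y_{\gamma_2}$ of compositional contour integrals with the short-arc asymptotics supplied by the Triangle Inequality Theorem \ref{thmTriInq}. The initial condition $g(w_0)=z$ is immediate: when $x=a$ the arc $\gamma_{w_0}$ is degenerate and the Riemann composition collapses to the identity $z\mapsto z$.

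For the derivative, fix $w\in\mathcal{S}$, and for $h$ small enough that the straight segment $\delta_h(t)=w+th$, $t\in[0,1]$, lies entirely in the open set $\mathcal{S}$, extend by forming $\widetilde{\gamma}=\delta_h\bullet\gamma_w$, an arc from $w_0$ to $w+h$. By the bullet algebra of contours,
$$g(w+h)=\int_{\widetilde{\gamma}}\phi(s,z)\,ds\bullet z=\int_{\delta_h}\phi(s,u)\,ds\bullet u\,\Big|_{u=g(w)}=Y_{\delta_h}(g(w)).$$
The Triangle Inequality Theorem furnishes a larger compact disk $\mathcal{L}$ containing $g(w)$ such that all partial compositions along $\delta_h$ remain in $\mathcal{L}$ and $|Y_{\delta_h}(u)-u|\le\int_{\delta_h}\|\phi\|_{\mathcal{L}}\,|ds|=\mathcal{O}(h)$ uniformly in $u$ on a neighbourhood of $g(w)$. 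Unpacking the Riemann composition one subdivision at a time (equivalently, using the Volterra-type integral characterization of the compositional integral on $\delta_h$) gives
$$Y_{\delta_h}(u)=u+h\int_0^1\phi\bigl(w+th,\,y(t)\bigr)\,dt=u+\phi(w,u)\,h+o(h),$$
where the remainder is $o(h)$ because $y(t)-u=\mathcal{O}(h)$ uniformly in $t\in[0,1]$ and $\phi$ is uniformly continuous on the compact set $\overline{B(w,|h|)}\times\mathcal{L}$. Setting $u=g(w)$ and dividing by $h$ yields $g'(w)=\phi(w,g(w))$.

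The main obstacle, to my mind, is not the asymptotic identity itself but the domain bookkeeping required to invoke Theorem \ref{thmTriInq}: we must confirm that $g(w)$ actually lies in a compact disk on which $Y_{\delta_h}$ is defined and holomorphic for all sufficiently small $h$. That fact is precisely the content of Theorems \ref{thmDNSCVG} and \ref{thmLOCAL}, which is why the paper advertises that the present theorem and Theorem \ref{thmDNSCVG} are proved in tandem. A secondary subtlety is that, strictly, $g$ is a function along the parametrized arc $\gamma$, so the cleanest route is to first prove $\tfrac{d}{dx}\,g(\gamma(x))=\phi(\gamma(x),g(\gamma(x)))\gamma'(x)$ and then divide by $\gamma'(x)$ to extract the $w$-derivative; this sidesteps any conflation of the path parameter with the geometric endpoint before path-independence (the subject of the section's next theorem) has been established.
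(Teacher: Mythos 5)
Your proposal is correct, and it reaches the conclusion by a route that differs in execution from the paper's. The paper works discretely: it peels off the outermost term of the partial composition, obtaining the exact identity $g_n(w) = z_{1n} + \phi(\gamma(x_0^*), z_{1n})(\gamma(x)-\gamma(x_1))$ with $z_{1n} = g_{n-1}(w')$ for the partition point $w'=\gamma(x_1)$, rearranges this into a difference quotient, and passes to the limit using the normality and uniform convergence of the $g_n$. You instead work in the continuum: you append a short segment $\delta_h$, invoke the concatenation property $g(w+h)=Y_{\delta_h}(g(w))$, and expand $Y_{\delta_h}$ to first order via the Volterra characterization and the Triangle Inequality Theorem \ref{thmTriInq}. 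Both arguments rest on the same underlying facts (normality of the partial compositions and the short-arc estimate), but yours buys something the paper's sketch does not explicitly deliver: since $\delta_h$ may point in any complex direction, you obtain the full complex derivative rather than only the derivative along the arc, which is what the statement $\frac{d}{dw}g(w)=\phi(w,g(w))$ really requires; the price is that you must define $g(w+h)$ by a particular extension of the path, and you correctly flag that this is a definition rather than an appeal to path independence (which comes later in the paper and would be circular here). Your closing remark about first proving $\frac{d}{dx}g(\gamma(x))=\phi(\gamma(x),g(\gamma(x)))\gamma'(x)$ and dividing by $\gamma'(x)$ is in fact closer to what the paper's own manipulation amounts to.
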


\begin{remark}
We will only provide a proof sketch of this result. The author found a more in-depth analysis in many senses a waste of space; as the result is perfectly intuitive, and filling in the gaps is more a manner of book-keeping a bunch of $\epsilon$'s and $\delta$'s than real insight. 
\end{remark}

\begin{proof}
Taking our previous notation, letting,

\[
z_{1n} = \OmSum_{j=1}^{n-1} z + \phi(\gamma(x_j^*),z)\Delta \gamma_j \bullet z
\]

And recalling,

\[
g_n( w) = \OmSum_{j=0}^{n-1} z + \phi(\gamma(x_j^*),z)\Delta \gamma_j \bullet z
\]

Then two things should be clear, $\lim_{n\to\infty} z_{1n} = \lim_{n\to\infty} g_n( w)$. Second of all, these two functions are related by the identity,

\[
g_n( w) = z_{1n} + \phi(\gamma(x_0^*),z_{1n})(\gamma(x) - \gamma(x_1))
\]

There are three things which can be observed here. Firstly $\gamma(x_0^*) \to  w$ as $n\to\infty$. Secondly, $\gamma(x) =  w$ and $\gamma(x_1)$ can be taken to be $ w'$; of which $ w -  w'$ tend to zero as $\mathcal{O}(1/n)$. Just as well we have the notational convenience $g_{n-1}( w') = z_{1n}$. Therefore, if we rearrange this expression,

\[
\frac{g_n( w) - z_{1n}}{\gamma(x) - \gamma(x_1)} = \phi(\gamma(x_0^*),z_{1n})
\]

And then rewrite it through asymptotic equivalence,

\[
\frac{g_n( w) - g_{n-1}( w')}{ w -  w'} = \phi( w, g_{n-1}( w'))
\]

Where, filling in the gaps, connecting the dots; while $|w-w'| < \delta$ and $n>N$; by normality,

\begin{eqnarray*}
|g_{n-1}(w') - g(w)| &<& \epsilon\\
|\phi(w,g_{n-1}(w')) - \phi(w,g(w))| &<& \epsilon'\\
\end{eqnarray*}

So that,

\begin{eqnarray*}
\Big{|}\frac{g_n( w) - g_{n-1}( w')}{ w -  w'} - \frac{g( w) - g( w')}{ w -  w'}\Big{|} &<& \epsilon''\\
\Big{|} \phi(w,g(w)) - \frac{g( w) - g( w')}{ w -  w'}\Big{|} &<& \epsilon'''\\
\end{eqnarray*}

We arrive at our derived result, though not without some extra elbow-grease left to the reader.

\[
\lim_{ w' \to  w} \frac{g( w) - g( w')}{ w -  w'} = \phi( w,g( w))
\]

This provides the result. But, before closing the proof; the author would like to elaborate a tad more. We should recall that $g_n(w)$ converges uniformly in $w$ and $z$. So as this may have seemed a tad simplistic; the fact we know all these functions are normal families to begin with, makes it concise rather.
\end{proof}

With this we state the final result of the section.

\begin{theorem}[The Compositional Integral Theorem]\label{thmC3A}
Let $\mathcal{S}$ be a simply connected domain in $\mathbb{C}$. Suppose $\phi(s,z):\mathcal{S} \times \mathbb{C} \to \mathbb{C}$ is a holomorphic function. For all closed contours $\gamma:[a,b] \to \mathcal{S}$, the contour integral,

\[
\int_\gamma \phi(s,z)\,ds \bullet z = z
\]
\end{theorem}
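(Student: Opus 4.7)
The plan is to reduce the claim to classical single-variable complex analysis via the Differential Theorem \ref{thmDIFF}. I first work under the simplifying assumption $\len(\gamma) \le \rho$ with $\rho$ small---the regime handled directly by the Convergence Theorem \ref{thmC2A}---and at the end invoke the bootstrap outlined at the start of the section (coupled with Theorem \ref{thmDNSCVG}) to lift the conclusion to arbitrary closed $\gamma$. Fix $z_0 \in \mathbb{C}$ and restrict $z$ to a small neighborhood of $z_0$; the Identity Theorem will then propagate $Y_\gamma(z) = z$ from this neighborhood to the full domain of normality $\mathcal{D}(\gamma)$.

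The crux is to build a single-valued holomorphic companion $h$ to the compositional integral. Consider the holomorphic initial-value problem $h'(w) = \phi(w, h(w))$ with $h(w_0) = z$. Standard holomorphic ODE theory gives a unique local solution about every point of $\mathcal{S}$, and analytic continuation along paths in $\mathcal{S}$ combined with monodromy---this is where simple connectivity of $\mathcal{S}$ is decisive---yields a single-valued holomorphic function $h : \mathcal{S} \to \mathbb{C}$, defined in a neighborhood of the trace of $\gamma$ provided we keep $z$ close enough to $z_0$ that $h$ does not hit a singularity along $\gamma$. Now set $g(x) = h(\gamma(x))$ and $\tilde g(x) = \int_{\gamma|_{[a,x]}} \phi(s,z)\,ds \bullet z$; the Differential Theorem shows that both $g$ and $\tilde g$ satisfy the ODE $y'(x) = \phi(\gamma(x), y(x))\gamma'(x)$ with initial value $y(a) = z$. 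Uniqueness of solutions forces $\tilde g \equiv g$, and since $\gamma$ is closed,
\[
Y_\gamma(z) \;=\; \tilde g(b) \;=\; h(\gamma(b)) \;=\; h(w_0) \;=\; z.
\]

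Equivalently, and more in the spirit of the preamble to the theorem, one can recognize $Y_\gamma(z) - z = \int_\gamma h'(s)\,ds$ as a classical contour integral of a holomorphic function possessing a primitive on a simply connected neighborhood of $\gamma$, whence it vanishes by the fundamental theorem of calculus (or classical Cauchy's Integral Theorem). The principal obstacle is the monodromy step: guaranteeing that the analytic continuation of $h$ closes up single-valuedly around the loop rather than picking up a nontrivial shift. Simple connectivity is precisely the hypothesis that forbids such a shift, while the length restriction $\len(\gamma)\le\rho$---removed at the end via the sectional bootstrap---keeps $h$ from encountering a singularity along the way. Once these two points are in hand, everything else amounts to bookkeeping $\epsilon$'s and $\delta$'s.
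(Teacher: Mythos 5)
Your proposal is correct and follows essentially the same route as the paper's own proof: both identify $Y_\gamma$ with the solution of the holomorphic initial-value problem $g' = \phi(w,g)$ via the Differential Theorem \ref{thmDIFF}, and both conclude from simple connectivity that the solution returns to its initial value $z$ around a closed loop --- your closing remark that $Y_\gamma(z) - z = \int_\gamma h'(s)\,ds = 0$ is precisely the paper's step $\int_\gamma \phi(s,g(s,z))\,ds = \int_\gamma dg = 0$. The only difference is presentational: you make the monodromy/single-valuedness issue explicit where the paper compresses it into the assertion that a function with a primitive integrates to zero over a closed contour, which is a point in your favor rather than a divergence of method.
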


\begin{proof}
Define $g( w,z)$ as the solution to the equations,

\[
\frac{d}{d  w} g( w,z) = \phi( w,g( w,z))\\
\]

And,

\[
g(\alpha, z) = g(\gamma(a),z) = z\\
\]

Where $ w \in \gamma$ traces along the contour, starting and finishing at the point $\alpha \in \gamma$. Choose a small enough neighborhood in $z$ in a connected component of $\mathcal{D}$ (the domain of normality), so that,

\[
g( w,z) = \lim_{\Delta \gamma_j \to 0} \OmSum_{j=0}^{n-1} z + \phi(\gamma_j^*,z)\Delta \gamma_j \bullet z\\
\]

Converges and is analytic. Therefore, as $ w \to \alpha^-$ (approaches $\alpha$ along the end of the path $\gamma$) we get,

\[
g( w,z) \to \int_{\gamma} \phi(s,z)\,ds\bullet z\\
\]

However, $g$ also satisfies,

\[
g(\alpha^-,z) = z + \int_\gamma \phi(s,g(s,z))\,ds\\
\]

But, $\phi(s,g(s,z)) = \dfrac{d g}{ds}$. Which implies,

\[
\int_\gamma \phi(s,g(s,z))\,ds = \int_\gamma dg = 0\\
\]

Therefore the result for $z$ in our neighborhood. By analytic continuation it holds everywhere in the domain of normality.
\end{proof}

\section{A formula for the derivative}\label{secC4}
\setcounter{equation}{0}

In this section we push towards understanding the action of $\frac{d}{d z}$ applied to our contour integral. We'll derive a closed form expression; and consequently show that our derivative is non-vanishing in $z$. So in such a sense, our contour integral is always locally invertible. And going further, this lets us speak about the orientation of our contour with more frankness.

In this section the following schema is used. The set $\mathcal{S}$ is a domain in $\mathbb{C}$. The function $\phi(s,z) : \mathcal{S} \times \mathbb{C} \to \mathbb{C}$ is a holomorphic function. Let $\gamma : [a,b] \to \mathcal{S}$ be a continuously differentiable arc. To begin we'll look at the contour integral in question,

\[
Y_\gamma(z) = \int_{\gamma} \phi(s,z)\,ds\bullet z\\
\]

Take its partial compositions,

\[
Y_n(z) = \OmSum_{j=0}^{n-1} z + \phi(\gamma_j^*,z)\Delta \gamma_j\bullet z\\
\]

As before, let us denote,

\[
z_{kn} = \OmSum_{j=k}^{n-1} z + \phi(\gamma_j^*,z)\Delta \gamma_j\bullet z\\
\]

Take the derivative of $Y_n$ in $z$; and denote $\frac{d}{d z} \phi(s,z) = \phi'(s,z)$. Then by the chain rule,

\[
\frac{d}{d z} Y_n(z) = \prod_{j=0}^{n-1}\Big{(}1 + \phi'(\gamma_j^*,z_{(j+1)n})\Delta \gamma_j\Big{)}\\
\]

If we take $\log$ on both sides of this equation, and we make the equivalence $\log (1 + \Delta) \sim \Delta$ as $\Delta \to 0$, we are given,

\begin{eqnarray*}
\log \frac{d}{d z} Y_n(z) &=& \sum_{j=0}^{n-1} \log\big{(}1 + \phi'(\gamma_j^*,z_{(j+1)n})\Delta \gamma_j\big{)}\\
&\sim& \sum_{j=0}^{n-1} \phi'(\gamma_j^*,z_{(j+1)n})\Delta \gamma_j\\
\end{eqnarray*}

And here is where we fiddle slightly. Denote $\gamma_ w : [a, w] \to \mathcal{S}$ as the contour $\gamma$ up to $w$. So that $\gamma_b = \gamma$, and $\gamma_a$ is the null contour and $ w = \gamma(x)$. Essentially the following equality happens,

\[
\lim_{n\to\infty} \sum_{j=0}^{n-1} \phi'(\gamma_j^*,z_{(j+1)n})\Delta \gamma_j = \int_\gamma \phi'\Big{(} w,\int_{\gamma_ w}\phi(s,z)\,ds\bullet z\Big{)}\,d w\\
\]

Exponentiating this result, and collecting everything together,

\[
\frac{d}{d z} Y_\gamma(z) = e^{\displaystyle \int_\gamma \phi'\Big{(} w,\int_{\gamma_ w}\phi(s,z)\,ds\bullet z\Big{)}\,d w}\\
\]

Writing the result in this manner is the desired form. But we need not make things so complicated to prove this result. Therefore in the theorem below, it is exactly what we've just written, but we can write it a bit simpler. In doing such, we partially abandon the fact that we are using contours, and think of this solely as an integration.

\begin{theorem}[The Derivative Formula]\label{thmDEVFRM}
Let $a, b \in \mathbb{R}$ with $a \le b$. Suppose $g(t,z) : [a,b] \times \mathbb{C} \to \mathbb{C}$ is continuously differentiable in $t$ and holomorphic in $z$. Then,

\[
\frac{d}{d z}\int_a^b g(t,z) \,dt\bullet z = e^{\displaystyle \int_{a}^b g'\Big{(}x,\int_a^x g(t,z)\,dt\bullet z\Big{)}\,dx}\\
\]
\end{theorem}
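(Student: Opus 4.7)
The plan is to reduce the claim to a classical linear ODE in the parameter $x$. First I would introduce the auxiliary function
\[
Y(x,z) \;=\; \int_a^x g(t,z)\,dt\bullet z,
\]
which by Theorem \ref{thmDIFF} (applied to the trivial arc $\gamma(t)=t$, or by the same argument rerun for a real parameter) satisfies the initial value problem $\partial_x Y(x,z)=g(x,Y(x,z))$ with $Y(a,z)=z$. By the convergence and normality results of Section \ref{secC2}, $Y(x,z)$ is holomorphic in $z$ on suitable neighbourhoods, and the partial compositions $Y_n(x,z)$ are holomorphic in $z$ and continuously differentiable in $x$, so joint regularity of $Y$ in $(x,z)$ is in hand.

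Next I would differentiate the ODE in $z$. Setting $W(x,z):=\partial_z Y(x,z)$ and interchanging $\partial_x$ with $\partial_z$, one obtains the linear ODE
\[
\partial_x W(x,z) \;=\; g'(x,Y(x,z))\,W(x,z), \qquad W(a,z)=1,
\]
where the initial condition follows from $Y(a,z)=z$. Since this is first order linear in $x$ (with $z$ a spectator), the explicit solution is immediate:
\[
W(x,z) \;=\; \exp\!\left(\int_a^{x} g'\!\left(t,Y(t,z)\right)\,dt\right).
\]
Setting $x=b$ gives precisely the asserted formula, because $W(b,z)=\partial_z\!\int_a^b g(t,z)\,dt\bullet z$ and $Y(t,z)=\int_a^t g(u,z)\,du\bullet z$ is what sits inside the exponent.

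The only step that is not a one-line consequence of Theorem \ref{thmDIFF} is the interchange of $\partial_x$ and $\partial_z$, which is the main obstacle. I would handle it by returning to the partial compositions. The expression
\[
\frac{d}{dz}Y_n(z) \;=\; \prod_{j=0}^{n-1}\bigl(1+g'(\gamma_j^*,z_{(j+1)n})\,\Delta\gamma_j\bigr)
\]
from the heuristic in the text is genuinely the derivative of the holomorphic function $Y_n(z)$ on the neighbourhood supplied by Lemma \ref{lma2A}. These derivatives form a normal family by Weierstrass, and since $Y_n\to Y$ uniformly on compacta, $\partial_z Y_n\to \partial_z Y$ uniformly on compacta as well. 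The same product formula, viewed as a Riemann sum for $\exp(\int \dots)$ once one takes logarithms and applies $\log(1+\Delta)\sim\Delta$, shows that the limit is continuous in $x$ and satisfies the linear ODE above. At that point the rigorous version matches the clean ODE argument, and the exponential formula follows.

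Finally, I would remark that this simultaneously supplies the missing piece of Theorem \ref{thmDNSCVG}: once one has the linear equation $\partial_x W = g'(x,Y)\,W$, the derivative $\partial_z Y$ is nowhere zero, so $Y_\gamma$ is locally invertible in $z$ except where the exponent blows up, i.e.\ on a set of measure zero, which is exactly the picture of $\mathcal{D}$ asserted there.
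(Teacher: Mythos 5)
Your proposal is correct, and it organizes the argument around a genuinely different principle than the paper does. The paper's proof proper contains no ODE at all: it takes the exact chain-rule identity $\frac{d}{dz}Y_n(z)=\prod_{j=0}^{n-1}\bigl(1+g'(x_j^*,z_{(j+1)n})\Delta x_j\bigr)$ for the partial compositions (derived in the discussion preceding the theorem), passes to logarithms with $\log(1+\Delta)\sim\Delta$, and then its entire written proof is the verification that the resulting Riemann sum $\sum_k g'(x_k^*,z_{(k+1)n})\Delta x_k$ converges to $\int_a^b g'\bigl(x,\int_a^x g(t,z)\,dt\bullet z\bigr)dx$, using $z_{(k+1)n}\to\int_a^{x_k^*}g(t,z)\,dt\bullet z$. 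You instead put the classical variational equation $\partial_x W=g'(x,Y)W$, $W(a,z)=1$, at the front and read the exponential off as its explicit solution, demoting the partial-composition product to the single task of justifying the interchange of $\partial_x$ and $\partial_z$ (via Weierstrass and the normality established in Section \ref{secC2}). The two arguments share the same rigorous core — without the product formula and the normal-family convergence of $\partial_z Y_n$, neither closes — but your framing buys two things the paper only gestures at afterwards: it makes Theorem \ref{thmNONZERO} (non-vanishing of the derivative) an immediate corollary of the linear ODE having a nowhere-zero solution, and it exhibits the link to Theorem \ref{thmDNSCVG} that the paper defers. The paper's version, conversely, avoids ever asserting joint $C^1\times$holomorphic regularity of $Y(x,z)$ as a two-variable object, which your ODE formulation implicitly requires and which you correctly flag as the one step needing the partial compositions.
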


\begin{proof}
All that must be shown is that the limit below converges to the desired result. Let $\{x_k\}_{k=0}^{n}$ be a partition of $[a,b]$ in descending order, let $x_{k+1} \le x_k^* \le x_k$, and let $\Delta x_k = x_k - x_{k+1}$; then what we want is,

\[
\lim_{\Delta x_k \to 0} \sum_{k=0}^{n-1} g'(x_k^*, z_{(k+1)n})\Delta x_k = \int_{a}^b g'\Big{(}x,\int_a^x g(t,z)\,dt\bullet z\Big{)}\,dx\\
\]

Where,

\[
z_{kn} = \OmSum_{j=k}^{n-1} z + g(x_j^*,z)\Delta x_j \bullet z\\
\]

But by the convergence of the nested partial compositions,

\[
z_{kn} - \int_{a}^{x_k^*} g(t,z)\,dt\bullet z \to 0
\]

And therefore,

\[
\sum_{k=0}^{n-1} g'(x_k^*, z_{(k+1)n})\Delta x_k -  g'\Big{(}x_k^*,\int_{a}^{x_k^*} g(t,z)\,dt\bullet z\Big{)}\Delta x_k \to 0
\]

But,

\[
\sum_{k=0}^{n-1}g'\Big{(}x_k^*,\int_{a}^{x_k^*} g(t,z)\,dt\bullet z\Big{)}\Delta x_k \to \int_a^b g'\Big{(}x, \int_{a}^{x} g(t,z)\,dt\bullet z\Big{)}\,dx\\
\]

Which gives the result.
\end{proof}

Since,

\[
\int_{\gamma} \phi(s,z) \,ds\bullet z = \int_a^b \phi(\gamma(x),z)\gamma'(x)\,dx \bullet z\\
\]

We derive our desired form of the equality. Now, all of these contour integrals have non-vanishing derivatives. Which is an easy consequence of our derivative being the exponential of some holomorphic function. This is to say,

\begin{theorem}\label{thmNONZERO}
Suppose $\mathcal{S}$ is a domain in $\mathbb{C}$. Suppose that $\phi(s,z) : \mathcal{S} \times \mathbb{C} \to \mathbb{C}$ is a holomorphic function. Let $\gamma : [a,b] \to \mathcal{S}$ be a differentiable arc. Then,

\[
\frac{d}{d z} \int_\gamma \phi(s,z)\,ds\bullet z \neq 0\\
\] 
\end{theorem}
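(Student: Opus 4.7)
The plan is to derive this as an essentially immediate corollary of Theorem \ref{thmDEVFRM}, since that formula already expresses the derivative of a compositional integral as the exponential of another integral, and exponentials are never zero. Very little additional work is required beyond setting up the substitution.

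First I would apply the parametrization identity from Section \ref{secO3} to recast the contour integral over the real interval,
\[
\int_\gamma \phi(s,z)\,ds\bullet z = \int_a^b \phi(\gamma(x),z)\gamma'(x)\,dx\bullet z,
\]
and set $g(t,z) = \phi(\gamma(t),z)\gamma'(t)$. The hypotheses of the Derivative Formula are then immediate: $g$ is continuously differentiable in $t$ (since $\gamma'$ is continuous and $\phi$ is holomorphic, hence smooth along the compact arc) and holomorphic in $z$ (since multiplication by the $z$-constant factor $\gamma'(x)$ preserves holomorphy). Noting that $g'(x,w) = \phi'(\gamma(x),w)\gamma'(x)$, Theorem \ref{thmDEVFRM} then yields
\[
\frac{d}{dz}\int_\gamma \phi(s,z)\,ds\bullet z = \exp\left(\int_a^b \phi'\left(\gamma(x),\int_a^x \phi(\gamma(t),z)\gamma'(t)\,dt\bullet z\right)\gamma'(x)\,dx\right).
\]

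The conclusion then rests on the trivial observation that $e^w \neq 0$ for every $w \in \mathbb{C}$; the derivative is nonzero whenever the argument of the exponential is a finite complex number. The one point worth being careful about---what I would call the main (though minor) obstacle---is checking that this argument is finite: the inner compositional integral $Y_{\gamma_x}(z)$ must stay in a compact subset of $\mathbb{C}$ as $x$ sweeps $[a,b]$, so that $\phi'(\gamma(x),Y_{\gamma_x}(z))$ remains bounded along the path. For $z$ in the domain of normality $\mathcal{D}(\gamma)$ this is guaranteed by the uniform convergence of the partial compositions on compact subsets (Theorems \ref{thmC2A} and \ref{thmDNSCVG}), together with continuity of $\phi'$ and continuity of $\gamma'$ on the compact interval $[a,b]$. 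Hence the outer integral is a bona fide complex number, its exponential is nonzero, and the non-vanishing of the derivative follows wherever the contour integral itself is holomorphic.
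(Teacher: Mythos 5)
Your proposal is correct and follows exactly the route the paper takes: the paper states Theorem \ref{thmNONZERO} as an immediate consequence of the Derivative Formula (Theorem \ref{thmDEVFRM}) after recasting the contour integral as $\int_a^b \phi(\gamma(x),z)\gamma'(x)\,dx\bullet z$, so that the derivative is the exponential of a holomorphic function and hence nonvanishing. Your added remark about the finiteness of the exponent (via normality of the partial compositions on the domain of convergence) is a detail the paper leaves implicit, but it does not change the argument.
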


And from this we are given an approach at better understanding orientation. So now, when we speak of inverting contours we can speak fairly absolutely. A local inverse of $Y_\gamma$ exists everywhere save a set of measure zero (a domain of non-normality). It is much easier now to make the claim that $Y_{\gamma^{-1}} = Y_\gamma^{-1}$. Of this we can now speak frankly of orientation, and our interjections above can be made definite.

The author chooses to write this in a single theorem. This theorem is quite frankly the best way we can talk about orientation. And furthermore, it serves to demonstrate the idea clearly: functional inversion inverts the orientation.

\begin{theorem}[The Orientation Theorem]\label{thmORNT}
Let $\mathcal{S}$ be a domain in $\mathbb{C}$. Suppose $\phi(s,z) : \mathcal{S} \times \mathbb{C} \to \mathbb{C}$ is a holomorphic function. Prescribe $\gamma:[a,b] \to \mathcal{S}$ as a differentiable arc, and $\gamma^{-1}:[a,b] \to \mathcal{S}$ as the arc $\gamma^{-1}(x) = \gamma(b+a-x)$. If,

\[
Y_\gamma(z) = \int_{\gamma}\phi(s,z)\,ds\bullet z
\]

Then about every $z_0 \in Y_\gamma(\mathcal{D})$ (for $\mathcal{D}$ the domain of normality) there is a neighborhood $\mathcal{U}$ where,

\[
\int_{\gamma^{-1}}\phi(s,z)\,ds\bullet z = Y_\gamma^{-1}(z)
\]

And $Y_\gamma^{-1}$ is the functional inverse of $Y_\gamma$ on $\mathcal{U}$.
\end{theorem}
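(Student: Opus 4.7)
The plan is to reduce the Orientation Theorem to the time-reversal symmetry of the first-order ODE produced by the Differential Theorem~\ref{thmDIFF}, and then invoke the non-vanishing of the derivative (Theorem~\ref{thmNONZERO}) to upgrade the resulting identity into a genuine local holomorphic inverse relationship.

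First I would fix $z_0 \in Y_\gamma(\mathcal{D})$ with $z_0 = Y_\gamma(z_1)$ for some $z_1 \in \mathcal{D}$, and write $g(x,z) = \int_{\gamma_x}\phi(s,z)\,ds\bullet z$ for the partial integral along $\gamma$ up to parameter $x$. Theorem~\ref{thmDIFF} gives $\partial_x g(x,z) = \phi(\gamma(x),g(x,z))\gamma'(x)$ with $g(a,z) = z$, so $g(b,z) = Y_\gamma(z)$. Define the time-reversed trajectory $\tilde g(x,z) = g(b+a-x,z)$. Using $(\gamma^{-1})'(x) = -\gamma'(b+a-x)$, a direct chain-rule computation yields $\partial_x \tilde g(x,z) = \phi(\gamma^{-1}(x),\tilde g(x,z))(\gamma^{-1})'(x)$ together with $\tilde g(a,z) = Y_\gamma(z)$ and $\tilde g(b,z) = z$.

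Next, by the uniqueness clause inside the Differential Theorem~\ref{thmDIFF} applied to the arc $\gamma^{-1}$, the solution $h(x,w)$ of $\partial_x h = \phi(\gamma^{-1}(x),h)(\gamma^{-1})'(x)$ with $h(a,w) = w$ must coincide with $\tilde g(x,z)$ once we set $w = Y_\gamma(z)$. Evaluating at $x = b$ delivers the key identity
\[
Y_{\gamma^{-1}}\bigl(Y_\gamma(z)\bigr) = h(b,Y_\gamma(z)) = \tilde g(b,z) = g(a,z) = z,
\]
valid for $z$ in a sufficiently small neighborhood of $z_1$ inside $\mathcal{D}$. Theorem~\ref{thmNONZERO} then gives $\frac{d}{dz}Y_\gamma(z_1) \neq 0$, so the holomorphic inverse function theorem produces a biholomorphism $Y_\gamma : \mathcal{V} \to \mathcal{U}$ from a neighborhood $\mathcal{V}$ of $z_1$ onto a neighborhood $\mathcal{U}$ of $z_0$. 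Combining this with the identity above shows $Y_{\gamma^{-1}} = Y_\gamma^{-1}$ on $\mathcal{U}$.

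The main obstacle I anticipate is the domain bookkeeping: $Y_\gamma$ and $Y_{\gamma^{-1}}$ each live on their own domains of normality $\mathcal{D}(\gamma)$ and $\mathcal{D}(\gamma^{-1})$, and the proof requires that for $z$ near $z_1$ the trajectory $\tilde g(\cdot,z)$ stays within a region where the partial compositions defining $Y_{\gamma^{-1}}$ converge uniformly. This is manageable because local uniform convergence of the partial compositions along $\gamma$ (from the Convergence Theorem~\ref{thmC2A}) gives a compact tube enclosing all the intermediate values $\tilde g(x,z)$, which one can then wrap in a compact $\mathcal{K} \subset \mathbb{C}$ to which the Triangle Inequality Theorem~\ref{thmTriInq} and the convergence results apply to $\gamma^{-1}$; but this is the only step that goes beyond the formal ODE manipulation, and it forces the final neighborhoods $\mathcal{V}$ and $\mathcal{U}$ to be shrunk appropriately.
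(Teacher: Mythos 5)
Your proposal is correct, but it takes a genuinely different route from the paper. The paper works entirely at the level of the Riemann partial compositions: it writes $Y_n^{-1} = \MhSum_{j}\bigl(z+\phi(\gamma_j^*,z)\Delta\gamma_j\bigr)^{-1}\bullet z$, shows by direct expansion that each infinitesimal inverse satisfies $\bigl(z+\phi(\gamma_j^*,z)\Delta\gamma_j\bigr)^{-1} = z - \phi(\gamma_j^*,z)\Delta\gamma_j + \mathcal{O}(\Delta\gamma_j^2)$, and then observes that the reversed outer composition $\MhSum_j z - \phi(\gamma_j^*,z)\Delta\gamma_j\bullet z$ is exactly the partial composition of $\int_{\gamma^{-1}}$, so the two limits agree once the quadratic errors are summed away. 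You instead pass to the ODE from the Differential Theorem~\ref{thmDIFF}, exploit the time-reversal symmetry $\tilde g(x,z)=g(b+a-x,z)$, and invoke uniqueness of the holomorphic initial-value problem to get $Y_{\gamma^{-1}}\circ Y_\gamma = \mathrm{id}$ before applying Theorem~\ref{thmNONZERO} and the inverse function theorem. Each approach buys something: the paper's argument is self-contained at the level of the nested compositions and never needs the (only sketched) Differential Theorem, whereas yours is shorter and sidesteps the accumulation-of-errors step that the paper waves through with ``under the operator $\OmSum$ the limits remain equivalent''; on the other hand, your route inherits whatever gaps remain in the proof of Theorem~\ref{thmDIFF} and in the implicit identification of the compositional integral with the \emph{unique} ODE solution, and it needs the domain bookkeeping you flag (that $Y_{\gamma^{-1}}$ actually converges near $z_0=Y_\gamma(z_1)$), which you handle plausibly via the compact tube of intermediate values. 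Both are acceptable within the paper's standards of rigor.
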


\begin{proof}
Starting with the contour integral of interest,

\[
Y_\gamma(z) = \int_\gamma \phi(s,z)\,ds\bullet z\\
\]

Take its partial compositions,

\[
Y_n(z) = \OmSum_{j=0}^{n-1} z + \phi(\gamma_j^*,z)\Delta \gamma_j\bullet z
\]

And invert it functionally about $z_0$, which is possible because for large enough $n$, the derivative $Y_n'(z_0) \neq 0$; and each individual term has derivative in a neighborhood of $1$. Then,

\[
Y_n^{-1} = \MhSum_{j=0}^{n-1} \Big{(}z + \phi(\gamma_j^*,z)\Delta \gamma_j\Big{)}^{-1}\bullet z\\
\]

For the moment, assume for small enough $\Delta \gamma_j$, the inverse,

\[
\Big{(}z + \phi(\gamma_j^*,z)\Delta \gamma_j\Big{)}^{-1} \sim z - \phi(\gamma_j^*,z)\Delta \gamma_j\\
\]

With this assumption,

\[
\lim_{n\to\infty}Y^{-1}_n(z) = \lim_{n\to\infty}\MhSum_{j=0}^{n-1} z - \phi(\gamma_j^*,z)\Delta \gamma_j\bullet z\\
\]

But this is precisely the partial compositions of the contour integral across $\gamma^{-1}(x) = \gamma(b+a-x)$. So proving our assumption proves the result. The rest of this proof then devolves into,

\[
\dfrac{\big{(}z+ \phi(\gamma_j^*,z)\Delta \gamma_j\big{)} \bullet \big{(}z - \phi(\gamma_j^*,z)\Delta \gamma_j\big{)}\bullet z - z}{\Delta \gamma_j}\to 0\,\, \text{as}\,\, \Delta \gamma_j \to 0\\
\]

Which can be translated more strictly as; for some $A \in \mathbb{R}^+$,

\[
\Big{|}\Big{|}\big{(}z+ \phi(\gamma_j^*,z)\Delta \gamma_j\big{)} \bullet \big{(}z - \phi(\gamma_j^*,z)\Delta \gamma_j\big{)}\bullet z - z\Big{|}\Big{|}_{z\in\mathcal{K}} \le A \Delta \gamma_j^2
\]

For every compact set $\mathcal{K} \subset \mathbb{C}$. This result is seen by direct comparison. Expanding the composition, we get,

\[
\phi(\gamma_j^*,z-\phi(\gamma_j^*,z)\Delta\gamma_j)\Delta \gamma_j - \phi(\gamma_j^*,z)\Delta\gamma_j = \mathcal{O}(\Delta \gamma_j^2)\\
\]

Which speaks for itself. Therefore in our neighborhood,

\[
\big{(}z + \phi(\gamma_j^*,z)\Delta \gamma_j\big{)}^{-1} = z - \phi(\gamma_j^*,z)\Delta \gamma_j + \mathcal{O}(\Delta \gamma_j^2)\\
\]

And therefore under the operator $\OmSum$ the limits remain equivalent.
\end{proof}

And lastly, from this theorem we are guaranteed something we hinted at earlier. By The Compositional Integral Theorem \ref{thmC3A} and by the locally invertible property of the contour integral, our anti-derivatives are independent of the path. This is to mean,

\begin{theorem}[The Path Independence Theorem]\label{thmPTHIND}
Suppose that $\mathcal{S}\subset \mathbb{C}$ is a simply connected domain. Suppose that $\phi(s,z):\mathcal{S} \times \mathbb{C} \to \mathbb{C}$ is a holomorphic function. Suppose $\tau:[a,b] \to \mathcal{S}$ and $\gamma:[a,b] \to \mathcal{S}$ are differentiable arcs such that: $\tau(a) = \alpha = \gamma(a)$, and $\tau(b) = \beta = \gamma(b)$. Then,

\[
\int_\gamma \phi(s,z)\,ds\bullet z = \int_\tau \phi(s,z)\,ds\bullet z\\
\]
\end{theorem}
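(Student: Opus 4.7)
The plan is to reduce Path Independence directly to the two major results just established: the Compositional Integral Theorem \ref{thmC3A} and the Orientation Theorem \ref{thmORNT}. Since $\mathcal{S}$ is simply connected and both $\gamma$ and $\tau$ run from $\alpha$ to $\beta$ inside $\mathcal{S}$, the concatenation $\eta = \tau^{-1} \bullet \gamma$ is a closed loop in $\mathcal{S}$ based at $\alpha$. Strictly speaking $\eta$ is only piecewise differentiable at the junction $\beta$, but the author has already noted in Section \ref{secC1} that all proofs adapt to piecewise arcs; alternatively one can smooth a small neighborhood of $\beta$ and pass to the limit. Modulo this adjustment, Theorem \ref{thmC3A} gives
\[
\int_\eta \phi(s,z)\,ds \bullet z = z
\]
throughout the domain of normality of $\eta$.

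Next I would exploit the $\bullet$-algebra of contours from Section \ref{secC1}, which rewrites the left-hand side as
\[
Y_{\tau^{-1}} \bullet Y_\gamma \bullet z = z.
\]
The Orientation Theorem \ref{thmORNT} states that $Y_{\tau^{-1}}$ is, locally around any $w_0 \in Y_\tau(\mathcal{D}_\tau)$, the genuine functional inverse of $Y_\tau$. So, picking a base point $z_0$ in the intersection of the domains of normality $\mathcal{D}_\gamma$, $\mathcal{D}_\tau$, $\mathcal{D}_\eta$ chosen so that $w_0 = Y_\gamma(z_0)$ lies in the chart on which $Y_{\tau^{-1}} = Y_\tau^{-1}$, the closed-loop identity reads $Y_\tau^{-1}(Y_\gamma(z)) = z$ on a neighborhood of $z_0$. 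Applying $Y_\tau$ to both sides yields $Y_\gamma(z) = Y_\tau(z)$ on that neighborhood.

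The main obstacle is purely bookkeeping rather than conceptual: $Y_\gamma$, $Y_\tau$, and $Y_\eta$ are each defined only off a (sheaf-theoretic) exceptional set, and Theorem \ref{thmORNT} supplies the inversion identity only on a local chart. The task is therefore to verify that the three domains of normality overlap on an open set, and that $Y_\gamma$ carries some open piece of this overlap into the chart where the Orientation Theorem applies. Since $Y_\gamma(z_0)$ is a continuous (indeed holomorphic) function of $z_0$ on $\mathcal{D}_\gamma$, and since both $\mathcal{D}_\tau$ and the chart of invertibility around $Y_\tau(z_0)$ are open, such a $z_0$ exists whenever $z_0 \in \mathcal{D}_\gamma \cap \mathcal{D}_\tau \cap \mathcal{D}_\eta$ and lies near a point where $Y_\gamma$ and $Y_\tau$ agree at an endpoint of the common contour; this is automatic at points close to the starting configuration $b = a$, where both integrals reduce to the identity.

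Once the pointwise equality $Y_\gamma(z) = Y_\tau(z)$ is established on this open neighborhood, the Identity Theorem propagates it throughout every connected component of $\mathcal{D}_\gamma \cap \mathcal{D}_\tau$ which meets that neighborhood, and since $\overline{\mathcal{D}_\gamma \cap \mathcal{D}_\tau} = \mathbb{C}$ by Theorem \ref{thmDNSCVG}, this exhausts every $z$ at which both sides are defined. This gives the desired path independence. In effect, the entire content of the theorem is already packed into Theorems \ref{thmC3A} and \ref{thmORNT}; only the sheaf-theoretic gluing remains.
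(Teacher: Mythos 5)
Your proposal is correct and follows essentially the same route as the paper's own proof: form the closed loop from $\gamma$ and $\tau^{-1}$, apply The Compositional Integral Theorem \ref{thmC3A} to get the identity, invoke The Orientation Theorem \ref{thmORNT} to identify $Y_{\tau^{-1}}$ with $Y_\tau^{-1}$ on a suitable neighborhood, and conclude by analytic continuation. The extra care you take with the piecewise junction at $\beta$ and with the overlap of the domains of normality is bookkeeping the paper leaves implicit, but the argument is the same.
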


\begin{proof}
Pick $z_0 \in \mathbb{C}$. The arc $\gamma \bullet \tau^{-1}$ is a closed contour, and for $z$ in a neighborhood $\mathcal{U}$ of $z_0$; by The Compositional Integral Theorem \ref{thmC3A}:

\[
\int_{\gamma \bullet \tau^{-1}}\phi(s,z)\,ds\bullet z = z\\
\]

By The Orientation Theorem \ref{thmORNT}, the neighborhood $\mathcal{U}$ can be chosen as small as possible so that ${\displaystyle \int_{\tau^{-1}} = \Big{(}\int_\tau\Big{)}^{-1}}$ on $\mathcal{U}$. Therefore on $\mathcal{U}$,

\[
\int_\gamma \phi(s,z)\,ds\bullet z = \int_\tau \phi(s,z)\,ds\bullet z\\
\]

By analytic continuation it must hold everywhere.
\end{proof}

\section{An additive to composition homomorphism}\label{secC5}
\setcounter{equation}{0}

In this section the following schema is used. The function $g(z) : \mathbb{C} \to \mathbb{C}$ is an entire function. The set $\mathcal{S}\subset\mathbb{C}$ is a domain. The functions $p(s):\mathcal{S}\to\mathbb{C}$ and $q(s):\mathcal{S}\to\mathbb{C}$ are holomorphic functions. The arc $\gamma:[a,b] \to \mathcal{S}$ is continuously differentiable.

The idea of this section is to derive a homomorphism from the integrand to the integral. That in, the following identity is derived,

\[
\int_\gamma \big{(}p(s) + q(s)\big{)}g(z)\,ds\bullet z = \int_\gamma p(s)g(z) ds\bullet \int_\gamma q(s)g(z)\,ds\bullet z\\
\]

This result will serve as the backbone for the rest of this paper. Everything that follows can be thought of as an attempt to generalize this.

The proof of this fact requires a trill of notation. The difficulty being, we must illustrate the notational trill well. As in the last section we can reduce this problem to a problem in integration, rather than a problem in contour integration. In this nature, there are two things which need to be shown. If,

\[
A = \int_a^b q(x)\,dx\\
\]

Then,

\[
\int_a^b q(x) g(z)\,dx\bullet z = \int_0^A g(z)\,du\bullet z\\
\]

Which follows by making the linear substitution $du = q(x)dx$. The second fact being that, if $d-c = d' - c'$, then,

\[
\int_c^d g(z)\,du\bullet z = \int_{c'}^{d'}g(z)\,du\bullet z\\
\]

This follows by looking at the partial compositions of this integral. Which is to say,

\[
\int_c^d g(z)\,du\bullet z = \lim_{\Delta u_j \to 0} \OmSum_{j=0}^{n-1} z + g(z)\Delta u_j\bullet z\\
\]

But $\Delta u_j = \Delta u_j'$ where $u_j'$ is a partition of $[c',d']$. This should be fairly intuitive. Therefore, if $B = \int_a^b p(x) + q(x)\,dx$ then $B-A = \int_a^b p(x)\,dx$, so,

\[
\int_a^b p(x) g(z)\,dx\bullet z = \int_A^B g(z)\,du\bullet z\\
\]

And of this, necessarily,

\[
\int_a^b p(x)g(z)\,dx\bullet \int_a^b q(x)g(z)\,dx\bullet z = \int_A^B g(z)\,du\bullet \int_0^A g(z)\,du\bullet z = \int_0^B g(z)\,du\bullet z\\
\]

However, by the above analysis,

\[
\int_0^B g(z)\,du\bullet z = \int_a^b \big{(}p(x) + q(x)\big{)}g(z)\,dx\bullet z\\
\]

With that, we've arrived at a homomorphism. Switching our discussion to contours is no different than the above, excepting we multiply by a factor of $\gamma'(x)$. This leads us to the next theorem detailing a homomorphic property of the compositional integral. It highlights the importance of separability in the compositional scenario. It works very similarly to the multiplicative property of the exponential, of which is a subset, when $g(z) = z$.

\begin{theorem}[Additive Homomorphism Theorem]\label{thmHOM}
Suppose $g(z) : \mathbb{C} \to \mathbb{C}$ is an entire function. Let $\mathcal{S}$ be a domain in $\mathbb{C}$. Suppose $p(s) : \mathcal{S} \to \mathbb{C}$ and $q(s) : \mathcal{S} \to \mathbb{C}$ are holomorphic functions. Let $\gamma : [ a,b] \to \mathcal{S}$ be a continuously differentiable arc. Then,

\[
\int_{\gamma} \big{(}p(s) + q(s)\big{)}g(z)\,ds\bullet z = \int_\gamma p(s)g(z)\,ds\bullet \int_\gamma q(s)g(z)\,ds\bullet z\\
\]
\end{theorem}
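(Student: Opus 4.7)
The plan is to reduce the contour identity to a one-variable identity and then exploit the substitution law together with the concatenation property $\int_b^c \bullet \int_a^b = \int_a^c$ of the compositional integral. The key structural observation is that $g$ depends only on $z$, so the one-form $p(s)\,ds$ or $q(s)\,ds$ enters only through its cumulative total, and the ``measure'' $du$ obtained by integration is additive in precisely the classical sense.

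First I would introduce the antiderivatives
$$U(x) = \int_a^x q(\gamma(t))\gamma'(t)\,dt, \qquad V(x) = \int_a^x p(\gamma(t))\gamma'(t)\,dt,$$
and set $A = U(b)$, $B = V(b)$. By the substitution law for the compositional integral recalled in Section \ref{secO3}, applied with $u = U(x)$ so that $du = q(\gamma(x))\gamma'(x)\,dx$ (and analogously for $p$), the three relevant integrals collapse to one-variable compositional integrals:
$$\int_\gamma q(s)g(z)\,ds\bullet z = \int_0^A g(z)\,du\bullet z, \qquad \int_\gamma p(s)g(z)\,ds\bullet z = \int_0^B g(z)\,du\bullet z,$$
while the sum integrates to $\int_0^{A+B} g(z)\,du\bullet z$.

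Second I would establish translation invariance: for any $c \in \mathbb{R}$,
$$\int_0^B g(z)\,du\bullet z = \int_c^{c+B} g(z)\,du\bullet z.$$
This is immediate from the Riemann composition definition, because the integrand $g(z)$ has no $u$-dependence: a partition $\{u_j\}$ of $[0,B]$ and its translate $\{u_j+c\}$ of $[c,c+B]$ yield identical partial compositions $\OmSum_j z + g(z)\Delta u_j \bullet z$. Applying this with $c=A$ and then invoking concatenation gives
$$\int_0^B g(z)\,du\bullet \int_0^A g(z)\,du\bullet z = \int_A^{A+B} g(z)\,du\bullet \int_0^A g(z)\,du\bullet z = \int_0^{A+B} g(z)\,du\bullet z,$$
which, after reversing the substitutions on both sides, is the desired identity.

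The main obstacle is not algebraic but domain-theoretic: since $g$ is entire but need not preserve any bounded region, Theorem \ref{thmDNSCVG} only guarantees convergence on a set $\mathcal{D}(\gamma)$ whose closure is $\mathbb{C}$, and the three contour integrals appearing in the statement have potentially different domains of normality. The plan is therefore to carry out the manipulation first on a common local neighborhood of some $z_0$ on which all intermediate partial compositions are normal (using the Triangle Inequality Theorem \ref{thmTriInq} to keep the nested compositions inside a common compact set as the step size shrinks), and then to extend the equality to the full intersection of domains by analytic continuation via the Identity Theorem, exactly as in the proof of Theorem \ref{thmPTHIND}. Once this bookkeeping is in place the substitution, translation, and concatenation steps each apply in the obvious way and the result follows.
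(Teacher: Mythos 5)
Your proposal is correct and follows essentially the same route as the paper: a substitution $du = q(\gamma(x))\gamma'(x)\,dx$ reducing each contour integral to a constant-integrand compositional integral, translation invariance of $\int_c^d g(z)\,du\bullet z$ (which the paper states as dependence only on $d-c$), and the concatenation law. The extra paragraph on domains of normality and analytic continuation is careful bookkeeping the paper omits, but it does not alter the argument.
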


A vast majority of the rest of this paper will be focused on expanding this result. We want this result to hold not just for when $\phi(s,z) = p(s)g(z)$ is separable; but also in more extravagant situations. In order to do this though, we'll have to jump head first into a fair amount of technical analyses.

\section{Discussions of the residual term}\label{secC6}
\setcounter{equation}{0}

In this section we will have a brief discussion of what happens when the integrand of our contour integral has poles. In such a sense, we ask if there is something like a residue theorem. In order to do this, we will work on some simple cases; but the main integrand we are interested in has the form $\dfrac{\phi(s,z)}{s-\zeta}$. The author aims to carve out an approach to evaluating closed contours about this integrand. And in the process setup the development of convenient manners of evaluating these contour integrals.

Throughout this section the following schema is used. The set $\mathcal{S} \subset \mathbb{C}$ is a simply connected domain. The function $\phi(s,z) : \mathcal{S} \times \mathbb{C} \to \mathbb{C}$ is a holomorphic function. Additionally, $\gamma$ will always be used to denote a closed contour; and as per our custom it will be assumed to be continuously differentiable. Without confusion we will identify $\Delta \gamma_j$ with $\gamma'(x_j^*)\Delta x_j$. Both forms of the limit are equivalent due to the differential relationship our integral satisfies; and the condition that $\gamma$ is continuously differentiable.

We'll start with some basic linear cases first. From there, we'll attract ourselves to the favored example $\phi(s,z) = z^2$ and finish off with $\phi(s,z) = p(s)z^n$ for some holomorphic $p$. This section will take a bit longer than the last sections. We're going to focus on the lead which brings us to the next chapter. Of this, we'll go slow. Everything that's happened so far is to be a preamble to the thesis of this paper.\\

To begin, we take the exponential case $\phi(s,z) = z$ and let $\gamma$ be the unit circle about zero. The expression we are interested in is, for $|\zeta| < 1$,

\[
Y_\gamma(z) = \int_\gamma \frac{z}{s-\zeta} \,ds \bullet z
\]

To begin we analyse the partially nested compositions. Let $\{x_j\}_{j=0}^{n}$ be a partition of $[0,2\pi]$ in descending order and let $x_{j+1} \le x_j^* \le x_j$. The partial compositions take the form,

\begin{eqnarray*}
Y_n(z) &=& \OmSum_{j=0}^{n-1} z + z\frac{ie^{ix_j^*}}{e^{ix_j^*}-\zeta}\Delta x_j \bullet z\\
&=& \OmSum_{j=0}^{n-1} z(1 + \frac{ie^{ix_j^*}}{e^{ix_j^*}-\zeta}\Delta x_j) \bullet z\\
&=& z \cdot \prod_{j=0}^{n-1} (1 + \frac{ie^{ix_j^*}}{e^{ix_j^*}-\zeta}\Delta x_j)\\
\end{eqnarray*}

In this particular instance we can show the product on the right converges to $1$. Taking logarithms and using the asymptotic $\log(1+\Delta) \sim \Delta$ for small $\Delta \to 0$, we are given the equivalent form,

\[
\sum_{j=0}^{n-1}\frac{ie^{ix_j^*}}{e^{ix_j^*}-\zeta}\Delta x_j \to \int_\gamma \frac{ds}{s - \zeta} = 2\pi i
\]

Therefore, exponentiating this gives us a startling result,

\[
\int_\gamma \frac{z}{s-\zeta}\,ds \bullet z = z
\]

If we add a separable term to this function then our residual term is a little stranger. Let $\phi(s,z) = p(s) z$ for some holomorphic function $p$. Again, for simplicity, we will take our contour to be the unit disk. The expression we are interested in is,

\[
Y_\gamma(z) = \int_\gamma \frac{p(s)z}{s-\zeta}\,ds \bullet z
\]

Again, looking at the partial compositions, we get the expression,

\[
Y_n(z) = z \prod_{j=0}^{n-1} (1 + \frac{ip(e^{ix_j^*})e^{ix_j^*}}{e^{ix_j^*} - \zeta}\Delta x_j)
\]

In the limit, after making the same $\log(1 + \Delta) \sim \Delta$ asymptotic equivalence; this becomes the expression,

\[
Y_\gamma(z) = z e^{\displaystyle\int_\gamma \frac{p(s)}{s-\zeta}\,ds} = z e^{\displaystyle 2 \pi i p(\zeta)}
\]

We can generalize this result too, to the idea that,

\[
\int_\gamma \frac{p(s)z}{(s-\zeta)^{k+1}}\,ds \bullet z = z e^{\displaystyle 2 \pi i \frac{p^{(k)}(\zeta)}{k!}}
\]

This should hint to the idea that there is something like a residual function. In general, it will not always have such a nice form. In fact, we won't really get a single function. The above is an exceptionally nice form especially because; where $r$ is also a holomorphic function,

\[
\int_\gamma \frac{p(s)z}{(s-\zeta)^{k+1}}\,ds \bullet \int_\gamma \frac{r(s)z}{(s-\zeta)^{k+1}}\,ds \bullet z = \int_\gamma \frac{(p(s)+r(s))z}{(s-\zeta)^{k+1}}\,ds \bullet z
\]

This convenience arises when $\phi(s,z) = p(s)z$ is a linear transform in $z$; and in the next chapter arises more powerfully. This additive homomorphism has already made an appearance, and will make quite a few more. This motivates the idea of expanding the contour integral of $\dfrac{\phi(s,z)}{s-w}$  into a composition of functions $\dfrac{\phi(s,z)}{(s-\zeta)^{k+1}}$, and reducing it to an infinite composition of functions. A form of this relationship will become important in the next chapter.

The next step then is to look at the case where $\phi(s,z)$ is linear in $z$. By which we let $\phi(s,z) = p(s) z + q(s)$. This proves to be a difficult case because we no longer have a product identity as above. We do get an identity, but it looks a bit more cumbersome. The quantity we are interested in is,

\[
Y_\gamma(z) = \int_\gamma \frac{p(s)z + q(s)}{s-\zeta}\,ds \bullet z
\]

Then the partial compositions have the form,

\begin{eqnarray*}
Y_n(z) &=& z\prod_{j=0}^{n-1} (1 + \frac{ip(e^{ix_j^*})e^{ix_j^*}}{e^{ix_j^*} - \zeta}\Delta x_j)\\ &+&  i\sum_{j=0}^{n-1} \frac{q(e^{ix_j^*})e^{ix_j^*}}{e^{ix_k^*} - \zeta}\Delta x_j\prod_{k=0}^{j-1} (1 + \frac{ip(e^{ix_k^*})e^{ix_k^*}}{e^{ix_k^*} - \zeta}\Delta x_k)\\
\end{eqnarray*}

The first term is something we already know the value of, namely $e^{2\pi i p(\zeta)}$. The more difficult question is what the second term expands as. Using tools from the calculus of differential equations it is possible to derive a closed form expression, 

\[
Y_\gamma(z) = ze^{\displaystyle 2 \pi i p(\zeta)} + \int_\gamma \frac{q(w)}{w-\zeta}e^{\displaystyle \int_{\gamma_w} \frac{p(s)}{s-\zeta}\,ds}\,dw\\
\]

Here $\gamma_w$ is the arc $\gamma$ up to the point $w$. This expression serves to describe the status quo, and acts as another anomaly. This $Y_\gamma$ strictly depends on how we take the curve $\gamma$. No longer do we arrive at a nice clean and simple formula with no mention of $\gamma$--thanks to the exponent in the integral. The integral $Y_\gamma$ depends on $\gamma$; and not just the singularities within it.

What we aim to show in this section may not appear as though it's related to the above discussion. It may not be quite obvious where the argument comes from; or what the argument is for; but we want two things of which we can speak of about the residual function. These two things will form the basis of the next chapter. Our residual must be independent of $\gamma$; and in the best situation, should only depend on the residue and the function itself. 

This is directly in contradiction to what we just said, but it is possible through a more calculated approach, and quite a bit of legwork--algebraic legwork. It's difficult to express what we mean by this, so we'll try to be more illustrative.\\

To straighten things out; we'll start with the case $\phi(s,z) = z^2$. Of this nature, the integral we are interested in is,

\[
Y_\gamma(z) = \int_\gamma \frac{z^2}{s-\zeta}\,ds\bullet z\\
\]

We'll let $g( w,z)$ be the anti-derivative in question. Which, by careful analysis, can be reduced to a closed form expression. To that end, we can write,

\[
g( w,z) = \int_{\gamma_ w} \frac{z^2}{s-\zeta}\,ds\bullet z\\
\]

Where $\gamma_ w : [a,x] \to \mathcal{S}$ is the contour $\gamma$ up to the point $w$. This is simply a quick way of writing that $g(\gamma(a),z) = g(\alpha,z) = z$, and,

\[
\frac{d}{d  w} g( w,z) = \frac{g( w,z)^2}{ w - \zeta}
\]

However, there is a closed form expression for this thing. Namely,

\[
g( w,z) = \frac{1}{\displaystyle \frac{1}{z} + \log\dfrac{\alpha-\zeta}{ w - \zeta}}\\
\]

Therefore, in order to derive the value of $Y_\gamma$, we need only trace $g( w,z)$'s path as $w$ goes along $\gamma$ and back to the starting point $\alpha$. In doing so, we collect a remnant of $2\pi i$ but the $\log$'s cancel out. This is to say our desired integral equals,

\[
\int_\gamma \frac{z^2}{s-\zeta}\,ds\bullet z = \frac{1}{\displaystyle \frac{1}{z} - 2 \pi i}\\
\]

What is to be noticed from this; this integral is entirely independent of $\gamma$. This is because our integrand is separable. The separability allows us to apply The Additive Homomorphism Theorem \ref{thmHOM}. This is what it's good for. We're going to make a brief segue here, and introduce what will be the study of the next chapter. Supposing we have some holomorphic function $p(s)$, then,

\[
\int_\gamma \frac{p(s)z^2}{s-\zeta}\,ds\bullet z = \frac{1}{\displaystyle \frac{1}{z} - 2 \pi i p(\zeta)}\\
\]

By a similar analysis to the above. In that,

\[
\int_{\gamma_ w} \frac{p(s)z^2}{s-\zeta}\,ds\bullet z = \frac{1}{\displaystyle \frac{1}{z} - \int_{\gamma_ w}\frac{p(s)}{s-\zeta}\,ds}
\]

And once we complete the contour, $ w = \alpha$, the integral reduces to the above. And just as generally,

\[
\int_\gamma \frac{p(s)z^2}{(s-\zeta)^{k+1}}\,ds\bullet z = \frac{1}{\displaystyle \frac{1}{z} - \frac{2 \pi i}{k!}p^{(k)}(\zeta)}\\
\]

This is quite almost exactly what happens in the linear case. But, it's difficult to map out the phenomena, or why this is happening; or what is happening. But our contour integral makes absolutely no mention of $\gamma$ on the Right Hand Side. All that remains is some kind of residue. And just like in The Additive Homomorphism Theorem \ref{thmHOM}; if the reader care to notice; we get a composition identity. Suppose that $k,l \ge 0$, $p(s)$ and $q(s)$ are arbitrary holomorphic functions. Then,

\[
\int_\gamma \frac{p(s)z^2}{(s-\zeta)^{k+1}} + \frac{q(s)z^2}{(s-\zeta)^{l+1}}\,ds\bullet z =\int_\gamma \frac{p(s)z^2}{(s-\zeta)^{k+1}}\, ds \bullet  \int_\gamma \frac{q(s)z^2}{(s-\zeta)^{l+1}}\,ds\bullet z\\
\]

If we pose a more complex residue; a similar result arises. That is,
 
\[
\int_{\gamma} \frac{p(s)z^n}{(s-\zeta)^{k+1}}\,ds\bullet z = \frac{1}{\displaystyle\sqrt[n-1]{\frac{1}{z^{n-1}} +(1-n) \frac{2\pi i}{k!}p^{(k)}(\zeta)}}\\
\]

This can be arrived at from the equation,

\[
\frac{d}{d  w}g( w,z) = \frac{p( w)g( w)^n}{( w - \zeta)^{k+1}}\\
\]

We must be careful in how we take the root, but it doesn't pose much of a problem for local $z$. The thing to take away, is that this again is independent of the path $\gamma$. Now this won't always happen (these functions are separable so it does). But we would like for it to always happen.  So although there is certainly no additive homomorphism for arbitrary $\phi$ and arbitrary $\gamma$; when $\phi$ is arbitrary and $\gamma$ is closed we hope to reclaim something. To do so we'll have to broach deeper waters of where abstract algebra cools complex analysis.

The idea of this section is to setup the next chapter. And init, we want to setup the conditions in which,

\[
\int_\gamma \big{(}f(s) + g(s)\big{)}\phi(s,z)\, ds\bullet z = \int_\gamma f(s)\phi(s,z)\,ds\bullet\int_\gamma g(s)\phi(s,z)\,ds\bullet z\\
\]

In order to do this we need to show that these integrals are \textit{some kind} of independent of $\gamma$--if they aren't truly independent. That being the driving force behind the above identities. The rest of this section is devoted to proving that ${\displaystyle \int_\gamma \frac{\phi(s,z)}{s-\zeta}\,ds\bullet z}$ is independent of the path $\gamma$ \emph{upto conjugation}. This allows us to assign a value to this integral independent of the path once we take the quotient by an equivalence class. We will go about and call this quantity the residual class. The title of the next theorem is intended to highlight what the significance of the theorem is for.

\begin{theorem}[The Conjugate Principle of Residuals]\label{thmWllDef}
Let $\mathcal{S}\subset\mathbb{C}$ be a simply connected domain. Let $f(s,z) : \mathcal{S} \times \mathbb{C} \to \widehat{\mathbb{C}}$ be a meromorphic function with an isolated singularity $\zeta \in \mathcal{S}$. If $\gamma:[a,b] \to \mathcal{S}$ and $\varphi:[a,b] \to \mathcal{S}$ are two Jordan curves about the singularity $\zeta$ such that:

\begin{itemize}
	\item Both are oriented the same,\\
	\item The only poles enclosed within each is $\zeta$,\\
\end{itemize}

Then,

\[
\int_\gamma f(s,z)\,ds\bullet z \simeq \int_\varphi f(s,z)\,ds\bullet z\\
\]

Are conjugate similar by some integral $\int_\tau f(s,z) ds\bullet z$ for an arc $\tau: [a,b] \to \mathcal{S}$.
\end{theorem}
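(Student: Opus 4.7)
The plan is to reduce the claim to The Compositional Integral Theorem \ref{thmC3A} by building a single auxiliary closed contour $\sigma$ from $\gamma$, $\varphi$, and a connecting arc $\tau$, and then reading off the conjugation from the identity $Y_\sigma = \mathrm{id}$. The point is that a conjugation relation $Y_\gamma = Y_\tau \circ Y_\varphi \circ Y_\tau^{-1}$ is algebraically the same as the closed-contour identity $Y_{\varphi^{-1} \bullet \tau^{-1} \bullet \gamma \bullet \tau} = \mathrm{id}$, so everything reduces to applying Cauchy-type reasoning to the right combination of the given data.

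First, I would pick basepoints $P$ on $\varphi$ and $Q$ on $\gamma$; without loss of generality (a cyclic reparametrization of a closed contour only further conjugates its integral), I reparametrize $\gamma$ and $\varphi$ so they start and end at $Q$ and $P$ respectively. Since $\mathcal{S}$ is simply connected, $\mathcal{S}\setminus\{\zeta\}$ is still path-connected, so I can pick a continuously differentiable arc $\tau : [a,b] \to \mathcal{S}\setminus\{\zeta\}$ with $\tau(a) = P$ and $\tau(b) = Q$. Then I form the closed composite contour
\[
\sigma \;=\; \varphi^{-1} \bullet \tau^{-1} \bullet \gamma \bullet \tau,
\]
based at $P$: traverse $\tau$ from $P$ to $Q$, then $\gamma$ once around back to $Q$, then $\tau^{-1}$ back to $P$, then $\varphi^{-1}$ once around back to $P$.

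The central step, and the main obstacle of the argument, is the topological claim that $\sigma$ lies inside a simply connected subdomain of $\mathcal{S}$ on which $f$ is holomorphic. Because $\gamma$ and $\varphi$ are Jordan curves with the same orientation about $\zeta$, their winding numbers about $\zeta$ agree, while $\tau$ and $\tau^{-1}$ contribute cancelling winding; hence $\sigma$ has winding number $0$ about $\zeta$. By hypothesis no other poles of $f(\cdot,z)$ are enclosed by either Jordan curve, so $\sigma$ encircles no pole at all. Since $\mathcal{S}$ is simply connected, $\pi_1(\mathcal{S}\setminus\{\zeta\}) \cong \mathbb{Z}$, and winding number zero is equivalent to null-homotopy. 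Concretely, I would exhibit a smooth cut from $\zeta$ to $\partial\mathcal{S}$ disjoint from the trace of $\sigma$; the complement of this cut inside $\mathcal{S}$ is then a simply connected subdomain $\mathcal{S}' \subset \mathcal{S}\setminus\{\zeta\}$ containing $\sigma$, and on $\mathcal{S}' \times \mathbb{C}$ the integrand $f$ is honestly holomorphic. Producing the explicit slit and verifying it avoids $\sigma$ (via the Jordan curve theorem and standard plane topology) is the only genuinely nontrivial part of the argument; everything else is algebra.

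With $\mathcal{S}'$ in hand, Theorem \ref{thmC3A} applied on $\mathcal{S}'$ to the closed contour $\sigma$ yields $Y_\sigma(z) = z$. Expanding by the algebra of contours and invoking The Orientation Theorem \ref{thmORNT} to identify $Y_{\varphi^{-1}} = Y_\varphi^{-1}$ and $Y_{\tau^{-1}} = Y_\tau^{-1}$ locally, in a neighborhood of any $z$ in the common domain of normality, the relation $Y_\sigma(z) = z$ reads $Y_\varphi^{-1} \circ Y_\tau^{-1} \circ Y_\gamma \circ Y_\tau(z) = z$. Rearranging in the non-abelian local group of compositional integrals produces
\[
Y_\gamma(z) \;=\; Y_\tau\bigl( Y_\varphi \bigl( Y_\tau^{-1}(z) \bigr) \bigr),
\]
which is exactly the claimed conjugate similarity $\int_\gamma \simeq \int_\varphi$ by $\int_\tau$. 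As elsewhere in the paper, the identity is valid in the sheaf-theoretic sense on neighborhoods, and extends by analytic continuation throughout the common domain of normality.
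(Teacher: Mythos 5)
Your overall strategy is the same as the paper's: connect the two curves by an arc $\tau$, form the composite closed contour $\sigma = \varphi^{-1}\bullet\tau^{-1}\bullet\gamma\bullet\tau$, apply The Compositional Integral Theorem \ref{thmC3A} to get $Y_\sigma(z)=z$, and rearrange with The Orientation Theorem \ref{thmORNT} into the conjugation $Y_\gamma = Y_\tau \bullet Y_\varphi \bullet Y_\tau^{-1}$. That skeleton is fine. But the step you flag as ``the only genuinely nontrivial part'' --- exhibiting a smooth cut from $\zeta$ to $\partial\mathcal{S}$ disjoint from the trace of $\sigma$, so that $\sigma$ lies in a simply connected subdomain $\mathcal{S}'\subset\mathcal{S}\setminus\{\zeta\}$ --- is impossible as stated. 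Both $\gamma$ and $\varphi$ are Jordan curves \emph{about} $\zeta$, i.e.\ $\zeta$ lies in the bounded interior of each, and both traces are contained in the trace of $\sigma$. Any arc from $\zeta$ to $\partial\mathcal{S}$ must leave the interior of $\gamma$ and therefore crosses $\gamma\subset\sigma$. More generally, no simply connected subdomain of $\mathcal{S}\setminus\{\zeta\}$ can contain the trace of $\sigma$, so Theorem \ref{thmC3A} in the form stated (which requires the closed contour to live in a simply connected domain on which the integrand is holomorphic) cannot be applied to $\sigma$ directly. Winding number zero gives null-homotopy of the \emph{loop} in $\mathcal{S}\setminus\{\zeta\}$, but not containment of its \emph{trace} in a simply connected set; you are conflating the two.

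The paper bridges exactly this gap with a cut-annulus (keyhole) argument: it considers the annulus $\mathcal{A}_\rho$ between the two curves, slits it along $\tau$ so that the slit annulus \emph{is} simply connected, and applies Theorem \ref{thmC3A} to the boundary contour $\partial\mathcal{A}_\rho + \tau - \tau$ of that slit region, reducing to $\int dg = 0$ there. Your proof becomes correct if you replace the slit-from-$\zeta$ construction with this: decompose $\sigma$ (up to the homotopies supplied by The Path Independence Theorem \ref{thmPTHIND}, which let you replace $\gamma$ and $\varphi$ by the two boundary circles of an annulus around $\zeta$) into the boundary of the cut annulus, and apply Theorem \ref{thmC3A} on that simply connected slit domain. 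Alternatively you would need to first prove a homotopy-invariance form of Theorem \ref{thmC3A}, which the paper does not state.
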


\begin{remark}
The author would like to put a `Bourbaki curvy road ahead' sign at this juncture. The following proof requires a fair amount of creativity to appreciate its significance. We've only chipped away at some of the intuition; as to why we might get something like this result. But we've been a tad opaque as to why it matters. The author figured diving right in may be more appropriate. The author thinks the curtness of this proof is appreciated.
\end{remark}

\begin{proof}
Starting with the contour integral of interest,

\[
Y_\gamma(z)= \int_\gamma f(s,z)\,ds\bullet z\\
\]

Here $\gamma:[a,b] \to \mathcal{S}$ and $\gamma(a) = \alpha = \gamma(b)$. By The Path Independence Theorem \ref{thmPTHIND}, the value ${\displaystyle g( w,z) = \int_{\alpha}^w f(s,z)\,ds\bullet z}$ depends only on the initial point $\alpha$ and $w$ (so long it doesn't pass around or through the singularity $\zeta$). So the final contour by consequence could only depend on $\alpha$. The object of this proof is to show it doesn't up to conjugation.

Take $\varphi: [a,b] \to \mathcal{S}$ such that $\varphi(a) = \beta = \varphi(b)$. Then, by The Path Independence Theorem \ref{thmPTHIND},

\[
Y_\varphi = \int_\varphi f(s,z)\,ds\bullet z\\
\]

Depends only on $\beta$. Call $\tau : [0,1] \to \mathcal{S}$ an arc such that $\tau(0) = \alpha$ and $\tau(1) = \beta$, such that $\tau$ doesn't encircle any poles or such. Then,

\[
\int_\tau \bullet \int_\gamma \bullet \int_{\tau^{-1}} \bullet \int_{\varphi^{-1}} \,ds\bullet z = z\\
\]

This follows from The Compositional Integral Theorem \ref{thmC3A}. To explicate we need to dig a tad deeper, consider the annulus $\mathcal{A}_\rho = \{s \in \mathcal{S}\,\mid\, \delta < |s-\zeta| < \delta + \rho\}$. Observe if we add a cut $\tau$ along the interior of $\mathcal{A}_\rho$; then $\mathcal{A}_\rho$ with this cut is simply connected. Then by The Compositional Integral Theorem \ref{thmC3A},

\begin{eqnarray*}
\Big{(}\int_{\partial \mathcal{A}_\rho} + \int_\tau - \int_\tau\Big{)}f(s,g(s,z))\,ds &=& 0\\
\Big{(}\int_{\partial \mathcal{A}_\rho} + \int_\tau - \int_\tau\Big{)}\frac{dg}{ds}\,ds &=& 0\\
\Big{(}\int_{\partial \mathcal{A}_\rho} + \int_\tau - \int_\tau\Big{)}dg &=& 0\\
\end{eqnarray*}

Since $\mathcal{A}_\rho$ with a cut along it is a simply connected domain. The result follows. By The Path Independence Theorem \ref{thmPTHIND} the arcs $\gamma$ and $\varphi$ are homotopically equivalent to arbitrary Jordan curves which touch their endpoints. Which concludes the proof.
\end{proof} 

Although we've only done this proof for $\gamma$ assumed to be of winding number $1$ (and we've implicitly assumed positive orientation), both of these conditions can be dropped throughout the proof. But in being illustrative we artfully avoided such cases. It is of mention though, if $\gamma_n = \gamma \bullet \gamma \bullet ... \bullet \gamma$ has winding number $n$ about $\zeta$, then,

\[
\int_{\gamma_n} f(s,z)\,ds\bullet z = \OmSum_{j=1}^n \int_{\gamma} f(s,z)\,ds\bullet z\\
\]

Which is to say, the more times we wind around the more we compose these objects with themselves. This works exactly how it works in Cauchy's analysis. Except in Cauchy's analysis we are given $z+2\pi i n \Res$. This is still composition, but the winding number reduces to a multiplicative factor, rather than an iterate.

In closing this chapter we use this theorem to create a definition. We will formally define what The Residual Class is. The next chapter will focus entirely on exploiting this definition and the stock-pile of knowledge at our disposal.

\begin{dfn}\label{defRSD}
The residual class $\Rsd(f,\zeta;z)$ of a meromorphic function $f: \mathcal{S} \times \mathbb{C} \to \widehat{\mathbb{C}}$ at a pole $\zeta$ is defined to be:

\[
\Rsd(f,\zeta;z) \ni \int_\gamma f(s,z)\,ds\bullet z\\
\]

Where:

\begin{itemize}
	\item $\gamma:[a,b] \to \mathcal{S}$ is a Jordan curve oriented positively.
	\item $\zeta$ is the only pole enclosed within $\gamma$.
\end{itemize}
\end{dfn}

And of which, our definition works to define a class of functions that are conjugate similar. Now, if we mod out by this equivalence relation $\simeq$, our resulting residual does not depend on $\gamma$. This effectively bypasses the problems we would have by our residuals depending on $\gamma$. This is a problem Cauchy never had to face; but lo, we face it now. But, how do we integrate in this strange space? How do we, as Cauchy so effortlessly did, say that ${\displaystyle \int_\gamma f = \sum \Res f }$ when our contours are only equivalent up to conjugation? How in heaven do we derive Cauchy's form of Taylor's Theorem? And where does Fourier play in all of this?

\chapter{Additive Properties Of Closed Contours}\label{chap3}

\section{Introduction}\label{secAT1}
\setcounter{equation}{0}

This chapter shall be split into two topics. The first topic has, as its cousin, the study of sums and series in Complex Analysis. Where, to express a holomorphic function $f(s): \mathcal{S} \to \mathbb{C}$ as a sequence of summands $f_n(s) : \mathcal{S} \to \mathbb{C}$ such that,

\[
f(s) = \sum_{n=0}^\infty f_n(s)\\
\]

Of these sums, the manner of convergence we shall follow was spearheaded by Reinhold Remmert in his two part text book series \emph{Theory of Complex Functions} \cite{Rem} and \emph{Classical Topics in Complex Function Theory} \cite{Rem2}. The notable property of how Remmert handles these sums being his consistent use of Normal Convergence. Therein, the sum above converges normally if,

\[
\sum_{n=0}^\infty ||f_{n}(s)||_{\mathcal{S}} = \sum_{n=0}^\infty \sup_{s \in \mathcal{S}}|f_{n}(s)| < \infty\\
\]

This confirms $f$'s holomorphy on $\mathcal{S}$. But additionally, confirms that all rearrangements of the sequence $f_n$ converge to $f$ as well. This is a nice convenience we'll use surreptitiously. One should think of this compactly though. We need not take the supremum norm on all of the domain $\mathcal{S}$, but solely its compact subsets. On seulement devoir que: for all compact sets $\mathcal{B} \subset \mathcal{S}$ the sum $\sum_{k=0}^\infty ||f_k(s)||_{s \in \mathcal{B}} < \infty$.

As said, the first topic of this paper is only related as a cousin to this subject. Of a confusing character, we will be interested in Infinite Compositions--not Infinite Sums. Infinite Compositions include Infinite Sums, and Infinite products. However, they also include strange beasts like the following,

\[
F(s,z) = \OmSum_{j=1}^\infty z + s^{2^j}z^2\bullet z\\
\] 

Which is holomorphic for $|s| < 1$ and $z \in \mathbb{C}$, and satisfies the petrifying functional equation,

\[
F(\sqrt{s},z) = F(s,z) + sF(s,z)^2\\
\]

Expressions like the above were well handled in the paper \cite{Nix2}. In this chapter we'll deal with much more complex beasts than that above. The idea of the first topic is to look at expressions of the form ${\displaystyle \int\sum_n f_n(s)\phi(s,z)\,ds\bullet z}$, and turn them into expressions of the from ${\displaystyle\OmSum_n\int f_n(s)\phi(s,z)\,ds\bullet z}$. At least, we want a zoology which looks something like this. In the separable case it looks exactly like this; so we are a bit hopeful. Strict care must be taken in order to do this. 

The first topic takes as its goal an analogue of Taylor Series. We shall extrapolate an involved proof which allows for a kind of Compositional Taylor Series. But, to get there, we'll provide a more general analysis of holomorphic functions as Infinite Compositions. And describe a strong meaning in the symbols ${\displaystyle \int \sum_n f_n(s)\phi(s,z)\,ds\bullet z = \OmSum_n \int f_n(s)\phi(s,z)\,ds\bullet z}$.

This will act as an extension to The Additive Homomorphism Theorem \ref{thmHOM}. It works in general instances and allows us to interchange infinite compositions through the integral, turning them into sums. 

The second topic is inter-weaved with the first topic. We'll need to add a good amount of algebraic analysis; and in doing so, stretch our theory to its limit. The idea of a Residual Class of a contour integral will be explored under the branch of algebraic analysis. We'll have to re-examine what the integral represents; and develop a modification to our integral type.

This chapter is intended to strengthen all the results we have just proven. In this nature, the proofs shall be rather short and sweet; thanks to the language we've developed; but will border on rather advanced concepts. This will only require abstracting the method of proof and washing it in generality to see its full effect. The culmination of this section is an attempt at generalizing The Additive Homomorphism Theorem \ref{thmHOM}; and exploiting it relentlessly.

\section{A brief expos\'{e} of Taylor Series}\label{secATP}

To start our work for this chapter, we're going to insinuate the main result of this chapter with some simple cases. From this we can better abstract the goal of this chapter. So for this instance, we are going to concern ourselves with separable functions. Through out this section we will write $\phi(s,z) = p(s) g(z)$. Here, $p(s):\mathcal{S} \to \mathbb{C}$ is a holomorphic function, and $g(z) : \mathbb{C} \to \mathbb{C}$ is an entire function.

We'll restrict ourselves to Jordan curves $\gamma : [a,b] \to \mathcal{S}$. As a motivating example we'll write the following. Let $| w-\zeta| < |\gamma(x)-\zeta| = |s-\zeta|$, and observe,

\[
\sum_{k=0}^\infty \frac{p(s)( w - \zeta)^k}{(s-\zeta)^{k+1}} = \frac{p(s)}{s- w}\\
\]

And by The Additive Homomorphism Theorem \ref{thmHOM}, we derive that,

\begin{eqnarray*}
\OmSum_{k=0}^\infty \int_\gamma \frac{p(s)g(z)( w-\zeta)^k}{(s-\zeta)^{k+1}}\,ds\bullet z &=& \int_\gamma \sum_{k=0}^\infty \frac{p(s)g(z)( w - \zeta)^k}{(s-\zeta)^{k+1}}\,ds\bullet z\\
&=& \int_\gamma \frac{p(s)g(z)}{s- w}\,ds\bullet z\\
\end{eqnarray*}

This needs to be done carefully; we should justify the interchange of the limit. But ignoring this caveat briefly, we can see how infinite compositions can be interchanged through the integral; turning into sums. To highlight what this thing is, it helps to provide context with examples.

Now in the separable case we need not require that $\gamma$ is a Jordan Curve, but for the nature of the general case: $\gamma$ needs to be closed, and for convenience a Jordan curve. And for illustration we continue to assume $\gamma$ is a Jordan curve. Consider the function $\phi(s,z) = p(s)$ when it is constant in $z$. There is something very familiar about this case, but we can write it strangely as,

\begin{eqnarray*}
\OmSum_{k=0}^{\infty} \int_\gamma \frac{p(s)( w - \zeta)^k}{(s-\zeta)^{k+1}}\,ds\bullet z &=& \OmSum_{k=0}^\infty \Big{(}z + \int_\gamma \frac{p(s)( w - \zeta)^k}{(s-\zeta)^{k+1}}\,ds\Big{)}\bullet z\\
&=& z + \sum_{k=0}^\infty \int_\gamma \frac{p(s)( w - \zeta)^k}{(s-\zeta)^{k+1}}\,ds\\
&=& z + 2 \pi i \sum_{k=0}^\infty \frac{p^{(k)}(\zeta)}{k!}( w - \zeta)^k\\
&=& z + 2 \pi i p( w)\\
&=& z + \int_\gamma \frac{p(s)}{s- w}\,ds\\
&=&\int_\gamma \frac{p(s)}{s- w}\,ds \bullet z\\
\end{eqnarray*}

So, for the constant case, our compositional identity will actually be the usual concept of a Taylor Series. We also have this result if $\phi(s,z) = zp(s)$; which follows from the formula for the residual. Which is to mean,

\begin{eqnarray*}
\OmSum_{k=0}^\infty \int_\gamma \frac{zp(s)( w - \zeta)^k}{(s-\zeta)^{k+1}}\,ds\bullet z &=& \OmSum_{k=0}^\infty ze^{\displaystyle\frac{2 \pi i}{k!}( w - \zeta)^kp^{(k)}(\zeta)}\bullet z\\
&=& z \prod_{k=0}^\infty e^{\displaystyle\frac{2 \pi i}{k!}( w - \zeta)^kp^{(k)}(\zeta)}\\
&=&z e^{\displaystyle\sum_{k=0}^\infty\frac{2 \pi i}{k!}( w - \zeta)^kp^{(k)}(\zeta)}\\
&=& z e^{\displaystyle2 \pi i p( w)}\\
&=& \int_\gamma \frac{zp(s)}{s- w}\,ds\bullet z\\
\end{eqnarray*}

And lastly this result also arises for the more anomalous case $\phi(s,z) = p(s) z^2$. In that we get,

\begin{eqnarray*}
\OmSum_{k=0}^\infty \int_\gamma \frac{z^2p(s)( w - \zeta)^k}{(s-\zeta)^{k+1}}\,ds\bullet z &=& \OmSum_{k=0}^\infty \frac{\bullet z}{\displaystyle \frac{1}{z} - \frac{2 \pi i}{k!}( w - \zeta)^kp^{(k)}(\zeta)}\\
&=& \frac{1}{\displaystyle \frac{1}{z} - \sum_{k=0}^\infty \frac{2 \pi i}{k!}( w - \zeta)^kp^{(k)}(\zeta)}\\
&=& \frac{1}{\displaystyle \frac{1}{z} - 2 \pi i p( w)}\\
&=& \int_\gamma \frac{z^2p(s)}{s- w}\,ds\bullet z\\
\end{eqnarray*}

The motivating argument we are trying to make in this chapter is that this result continues to hold for arbitrary $\phi$--if it is sufficiently modified. This is the essence of what we're trying to do. In doing this we'll have to be careful and develop a good amount of machinery. We have wet our feet with a tad bit of algebra. Henceforth we'll need to speak more so in these terms.

We will prove a more general construct; of which Taylor sums are a subset; but the exemplary goal is to prove this Taylor expansion for arbitrary $\phi(s,z)$, not just when it's separable. This will require a strict remediation of Cauchy's idea of a residue; and exhuming the structure of residuals.

\section{The Residual Class Theorem}\label{secAT2}
\setcounter{equation}{0}

In this section the following schema is used. The set $\mathcal{S}$ is a simply connected domain in $\mathbb{C}$. The function $f(s,z):\mathcal{S}\times \mathbb{C} \to \widehat{\mathbb{C}}$ is a meromorphic function. We will also assume the singularities of $f(s,z)$ are independent of $z$; that they are isolated in $\mathcal{S}$. The arc $\gamma:[a,b] \to \mathcal{S}$ is a closed contour which doesn't pass through a pole of $f$.

This section is intended to decompose the expression,

\[
\int_\gamma f(s,z)\,ds\bullet z\\
\]

Into its individual residual functions. To do that is a bit of a mind bender; but follows closely to Cauchy's Residue Theorem. In fact, this construction will act as a direct generalization of Cauchy's analysis. To get there we'll have to exploit the similar nature of residual classes.

The residual comes close to Cauchy's idea of a residue, and is intertwined with it. Insofar as, if $f(s,z) = f(s)$ is constant in $z$, then,

\[
\Rsd(f,\zeta;z) = z + 2\pi i \Res_{s=\zeta} f
\]

But it also amounts to complicated beasts like,

\begin{eqnarray*}
\Rsd(p(s)z, \zeta;z) &=& ze^{\displaystyle 2 \pi i \Res_{s=\zeta}p(s)}\\
\Rsd(p(s)z^2, \zeta;z) &=& \frac{1}{\displaystyle \frac{1}{z} - 2 \pi i \Res_{s=\zeta}p(s)}\\
\Rsd(p(s)z^n, \zeta;z)&=&  \frac{1}{\displaystyle \sqrt[n-1]{\frac{1}{z^{n-1}} +(1-n) 2 \pi i \Res_{s=\zeta}p(s)}}\\
\end{eqnarray*}

But in general circumstances, no single clean formula can be derived like the above; and in most cases these things are not single elements. The class is a class of functions, and not a single function; it's just in the separable case they're a single function. It is a bit more of a mysterious thing than the residue. The residual class is actually a property of $f$, and bears little relation to how the integral is taken. 

But additionally, if there are multiple poles of $f$ within $\gamma$, then the integral doesn't depend on the path $\gamma$. It only depends on the residuals within the contour. By which it is meant, the following theorem,

\begin{theorem}\label{thmPTHINDP}
Suppose $\mathcal{S} \subset \mathbb{C}$ is a simply connected domain. Suppose $f(s,z) : \mathcal{S} \times \mathbb{C} \to \widehat{\mathbb{C}}$ is a meromorphic function. If $\gamma :[a,b] \to \mathcal{S}$ and $\varphi:[a,b]\to\mathcal{S}$ are Jordan oriented the same, contain the same singularities of $f$ in their interior then,

\[
\int_\gamma f(s,z)\,ds\bullet z \simeq \int_{\varphi}f(s,z)\,ds\bullet z\\
\]

Which is to say there is an arc $\tau \subset \mathcal{S}$ such that,

\[
\int_\tau f(s,z) \,ds \bullet \int_\gamma f(s,z)\,ds\bullet z = \int_{\varphi}f(s,z)\,ds \bullet \int_\tau f(s,z) \,ds\bullet z\\
\]
\end{theorem}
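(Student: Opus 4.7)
The plan is to upgrade the proof of Theorem \ref{thmWllDef} by reducing the multi-pole case to a single compositional loop that is null-homotopic in the punctured domain. Concretely, I choose a continuously differentiable arc $\tau:[0,1]\to\mathcal{S}$ with $\tau(0)=\varphi(a)$ and $\tau(1)=\gamma(a)$ which avoids every pole of $f$, and I form the closed compositional contour $\delta = \varphi^{-1}\bullet\tau\bullet\gamma\bullet\tau^{-1}$. Using the compositional group law together with The Orientation Theorem \ref{thmORNT} to identify $Y_{\tau^{-1}}$ with $Y_\tau^{-1}$ on a neighborhood, the claimed identity $Y_\tau\bullet Y_\gamma = Y_\varphi \bullet Y_\tau$ is equivalent to $Y_\delta(z)=z$; this is what I actually aim to prove.

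Next I would verify the topological input. Since $\gamma$ and $\varphi$ are Jordan curves with the same orientation enclosing precisely the same singularities $\zeta_1,\ldots,\zeta_n$ of $f$, the loops $\varphi$ and $\tau\bullet\gamma\bullet\tau^{-1}$ are based loops in the punctured domain $\mathcal{U} = \mathcal{S}\setminus\{\zeta_1,\ldots,\zeta_n\}$ with the same winding number around each $\zeta_i$ (namely $+1$) and zero winding around every other pole of $f$. They therefore represent the same element of $\pi_1(\mathcal{U})$, and $\delta$ is null-homotopic in $\mathcal{U}$ via some continuous $H:[0,1]^2\to\mathcal{U}$ whose image is compact.

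The core technical step is to upgrade The Compositional Integral Theorem \ref{thmC3A} from simply connected domains to arbitrary null-homotopies. I would subdivide the square $[0,1]^2$ by a fine grid; by uniform continuity of $H$ and compactness of the image, each sub-square $Q_{ij}$ maps into some open disk $B_{ij}\subset\mathcal{U}$ on which $f$ is holomorphic, so Theorem \ref{thmC3A} applied to the image of $\partial Q_{ij}$ yields $Y_{H(\partial Q_{ij})} = \mathrm{id}$ in a neighborhood. I then assemble $\delta$ compositionally by peeling off one sub-square at a time from the boundary of a growing union, at each stage absorbing a new identity factor and cancelling the shared interior edge with its reversal via Theorem \ref{thmORNT}. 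After finitely many steps the growing boundary is the constant loop at the base-point and what remains on the other side is $Y_\delta$, forcing $Y_\delta(z)=z$.

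The main obstacle I anticipate is precisely this sub-square bookkeeping. Because the compositional integral is noncommutative, I cannot rearrange factors freely: an interior edge cancels with its reversal only when the two occurrences are \emph{adjacent} in the compositional word. The inductive peeling sketched above is designed exactly to guarantee adjacency, since at each stage the newly-absorbed sub-square shares exactly one edge with the current growing region and that edge is traversed in opposite directions by the two abutting terms. Once $Y_\delta = \mathrm{id}$ is in hand, the rearrangement to the statement of the theorem is immediate; invoking The Path Independence Theorem \ref{thmPTHIND} on the pole-free region traced by $\tau$ further shows that the particular arc $\tau$ chosen is irrelevant within its homotopy class, so the conjugating integral $\int_\tau f\,ds\bullet z$ is well-defined up to the same equivalence as the residual classes themselves.
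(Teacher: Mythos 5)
Your strategy is sound but it is a genuinely different route from the paper's. The paper proves this by pointing back to Theorem \ref{thmWllDef}, whose argument is the classical ``cut the annulus'' device: insert an arc $\tau$ joining the two base points, observe that the region between the two Jordan curves with a slit along $\tau$ is simply connected and pole-free, and then apply The Compositional Integral Theorem \ref{thmC3A} together with the exactness $f(s,g(s,z)) = \frac{dg}{ds}$ to kill the closed-contour integral in one stroke. You instead reduce everything to showing $Y_\delta = \mathrm{id}$ for the null-homotopic loop $\delta = \varphi^{-1}\bullet\tau\bullet\gamma\bullet\tau^{-1}$ and prove a homotopy form of Theorem \ref{thmC3A} by Goursat-style grid subdivision with careful non-commutative peeling. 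What your approach buys is generality: a homotopy version of the Compositional Integral Theorem is a stronger tool than the slit-domain trick and would apply to non-simple closed curves as well. What it costs is considerably more bookkeeping (ordering the sub-squares so that each new square meets the growing region in a single connected arc, and smoothing the homotoped edges, since the paper only integrates over continuously differentiable arcs).

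One step in your write-up is genuinely wrong as stated: ``same winding number around each $\zeta_i$ \ldots therefore represent the same element of $\pi_1(\mathcal{U})$.'' Winding numbers only determine the class in $H_1(\mathcal{U})$, and $\pi_1$ of a multiply punctured domain is free non-abelian, so this inference fails for general loops (a commutator of two puncture loops has all winding numbers zero without being null-homotopic). It is rescued here only because $\gamma$ and $\varphi$ are \emph{Jordan} curves: two simple closed curves in a planar domain enclosing the same punctures are freely homotopic in the punctured domain, hence conjugate in $\pi_1$, and the conjugating class can be absorbed into the choice of $\tau$ (which is exactly the freedom the theorem's conclusion grants you). You should justify the homotopy by this classification of simple closed curves rather than by winding numbers; with that repair, and with the adjacency-of-cancellation bookkeeping carried out as you describe, the argument goes through.
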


\begin{proof}
See the proof of The Well-Defined Principle of Residuals Theorem \ref{thmWllDef}. The proof is exactly the same.
\end{proof}

To begin the foray, we'll take two singularities of $f$, call them $\zeta_1, \zeta_2 \in \mathcal{S}$. Let $\tau_1$ be a circle about $\zeta_1$ oriented positively, with a hole in it of width $\delta$. Let $\tau_2$ be a circle about $\zeta_2$ oriented positively, with a hole in it of width $\delta$. Let $\rho^-$ be a line connecting the bottom part of each hole, oriented positively. Let $\rho^+$ be a line connecting the top part of each hole, oriented negatively. Call $\gamma = \tau_1 \bullet \rho^+ \bullet \tau_2 \bullet \rho^{-}$; and for the sake of the argument assume the only singularities within $\gamma$ are $\zeta_1$ and $\zeta_2$. Additionally call $h_1$ the arc covering the hole of $\tau_1$.

By careful analysis,

\[
\int_\gamma f(s,z)\,ds\bullet z = \int_{\tau_1} f(s,z)\,ds\bullet \int_{\rho^+} f(s,z)\,ds\bullet\int_{\tau_2} f(s,z)\,ds\bullet\int_{\rho^{-}} f(s,z)\,ds\bullet z\\
\]

And,

\[
\int_{\tau_1} f(s,z)\,ds\bullet \int_{h_1}f(s,z)ds\bullet z \simeq \Rsd(f,\zeta_1;z)
\]

And therefore,

\[
\int_\gamma f(s,z) \,ds\bullet z \simeq \Rsd(f,\zeta_1;z) \bullet \int_{h_1^{-1}}f(s,z)ds\bullet \int_{\rho^+} f(s,z)\,ds\bullet\int_{\tau_2} f(s,z)\,ds\bullet\int_{\rho^{-}} f(s,z)\,ds\bullet z\\
\]

However $h_1^{-1} \bullet \rho^+ \bullet \tau_2 \bullet \rho^{-}$ is a closed contour about the singularity $\zeta_2$. And therefore,

\[
\int_\gamma f(s,z) \,ds\bullet z \simeq \Rsd(f,\zeta_1;z) \bullet \Rsd(f,\zeta_2;z) \bullet z\\
\]

But, since the contour is independent of how we choose $\gamma$ (up-to conjugation), we can put the contour about $\zeta_2$ first. So the ordering of $\zeta_1$ and $\zeta_2$ doesn't matter, we get that,

\[
\int_\gamma f(s,z) \,ds\bullet z \simeq \Rsd(f,\zeta_2;z) \bullet \Rsd(f,\zeta_1;z) \bullet z\\
\]

We should briefly note that when we write $\Rsd(f,\zeta;z)$ we mean some representative of this set. It is not that for $g \in \Rsd(f,\zeta_1;z)$ and $h \in \Rsd(f,\zeta_2;z)$ that $ g\bullet h = h \bullet g$; but rather there is some $h' \in \Rsd(f,\zeta_2;z)$ such that $h' \bullet g = g \bullet h$. In group theoretic terms, we can think of this as $\Rsd$ being a normal set.

Now this theorem applies for as many singularities as are contained within the contour. And therefore, of this nature we get a residue theorem. This theorem is of the residual class though, so we'll label it The Residual Class Theorem.

\begin{theorem}[The Residual Class Theorem]\label{thmRES}
Suppose $\mathcal{S} \subset \mathbb{C}$ is a simply connected domain. Suppose $f(s,z) : \mathcal{S} \times \mathbb{C} \to \widehat{\mathbb{C}}$ is a meromorphic function. Let $\gamma : [a,b] \to \mathcal{S}$ be a Jordan curve oriented positively. Let $\{\zeta_j\}_{j=1}^n$ be a list of the poles of $f(s,z)$ contained in $\gamma$--done so in no particular order. Then,

\[
\int_\gamma f(s,z)\,ds\bullet z \simeq \OmSum_{j=1}^n \Rsd(f,\zeta_j;z)\bullet z\\
\]
\end{theorem}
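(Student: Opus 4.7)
The plan is induction on $n$, the number of poles enclosed by $\gamma$. The base case $n=1$ is essentially the content of Definition \ref{defRSD}: if $\gamma$ is a positively-oriented Jordan curve enclosing only $\zeta_1$, then by definition $\int_\gamma f(s,z)\,ds\bullet z$ is a representative of $\Rsd(f,\zeta_1;z)$, and independence of the representative modulo $\simeq$ is guaranteed by Theorem \ref{thmWllDef}.

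For the inductive step, I would imitate the two-pole keyhole construction worked out immediately before the theorem statement. Assuming the result for $n$ poles, pick any pole $\zeta_{n+1}$ enclosed by $\gamma$. Let $\tau$ be a small positively-oriented arc about $\zeta_{n+1}$ with a hole of width $\delta$ facing the remaining poles, let $\sigma$ be an arc along a Jordan curve enclosing only $\{\zeta_j\}_{j=1}^n$ with a matching hole facing $\tau$, and let $\rho^{\pm}$ be the two corridor arcs joining the holes with opposite orientations. By Theorem \ref{thmPTHINDP} the curve $\gamma$ deforms, up to $\simeq$, to the concatenation $\tau\bullet\rho^+\bullet\sigma\bullet\rho^-$, whence
\[
\int_\gamma f\,ds\bullet z \simeq \int_\tau f\bullet\int_{\rho^+} f\bullet\int_\sigma f\bullet\int_{\rho^-} f\bullet z.
\]
Now let $h$ denote the short arc that fills in the hole of $\tau$. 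Inserting $\int_h\bullet\int_{h^{-1}}$ after $\int_\tau$, which is the identity on a local neighborhood of $z$ by the Orientation Theorem \ref{thmORNT}, the leading two factors combine into $\int_{\tau\bullet h} f$, a representative of $\Rsd(f,\zeta_{n+1};z)$ by Definition \ref{defRSD}. The leftover trailing factor $\int_{h^{-1}}\bullet\int_{\rho^+}\bullet\int_\sigma\bullet\int_{\rho^-}$ is the integral along a closed contour that encloses exactly $\{\zeta_j\}_{j=1}^n$, so the induction hypothesis applies and yields
\[
\int_\gamma f\,ds\bullet z \simeq \Rsd(f,\zeta_{n+1};z)\bullet \OmSum_{j=1}^{n}\Rsd(f,\zeta_j;z)\bullet z,
\]
which, since the pole peeled off first was arbitrary, is the claim (the theorem lists the poles in no particular order precisely because any permutation is achievable by choosing a different peeling order).

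The main obstacle, and where care is required, is the bookkeeping of the $\simeq$-equivalences accrued from the corridor pair $\rho^\pm$ and the hole-filler $h$ at each peeling. Theorem \ref{thmWllDef} is the workhorse here: it certifies that the conjugate-similarity absorbs each such corridor integral into the residual class, and it is the reason the two different orderings $\Rsd(f,\zeta_j;z)\bullet\Rsd(f,\zeta_k;z)$ and $\Rsd(f,\zeta_k;z)\bullet\Rsd(f,\zeta_j;z)$ land in the same $\simeq$-class even though the residuals do not literally commute—this is the normal-set property highlighted just after the two-pole discussion. A secondary check, which is straightforward given the simple-connectedness of $\mathcal{S}$ but worth flagging, is that the corridor arcs must be chosen thin enough and non-self-intersecting, and the hole of $\sigma$ positioned so that the trailing contour genuinely remains a Jordan curve enclosing exactly the $n$ remaining poles; this is always possible because the poles are isolated in $\mathcal{S}$.
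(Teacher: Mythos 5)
Your proof follows essentially the same route as the paper: the keyhole decomposition of $\gamma$, the insertion of the hole-filler arc $h$ so that $\int_{\tau\bullet h}$ becomes a representative of $\Rsd(f,\zeta_{n+1};z)$, and the observation that the leftover factor is a closed contour about the remaining poles. The paper works out the two-pole case in detail and then asserts the construction generalizes; you package that generalization cleanly as an induction and invoke Theorem \ref{thmPTHINDP} explicitly to deform $\gamma$ to the keyhole, which is a worthwhile tightening rather than a genuinely different method.
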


We'll add a little segue here. It should be noted that this result holds when ${\displaystyle \frac{\partial \zeta_j}{\partial z} \neq 0}$, so long as there is no singularity along the path of $\gamma$ as $z$ varies. This is to say, along non-isolated singularities this result is still possible.

But this should be done very carefully; and in such instances $\Rsd(f,\zeta_j;z)$ is not necessarily a meromorphic function on all of $\mathcal{S} \times \mathbb{C}$--except perhaps if $\mathcal{S} = \mathbb{C}$. The domain in $z$ must be analyzed carefully when ${\displaystyle\frac{\partial \zeta_j}{\partial z} \neq 0}$. We will avoid such cases. But, through a more mediated analysis, the following proofs are adaptable to these situations. To keep this paper clear, the proofs that follow are intended for the cases where ${\displaystyle\frac{\partial \zeta_j}{\partial z}=0}$. Or that, our singularities are independent of $z$. Isolated singularities are our main currency.

This small little theorem will make up the basis of much of this chapter. A lot of the work we shall do will focus exclusively on consequences of this result. This nifty little tool helps form comprehension of something that isn't very easy to comprehend.

\section{The Congruent Integral}\label{secATE}
\setcounter{equation}{0}

One of the most powerful theorems in Cauchy's analysis is the path independence of integrals about singularities. If we take a Jordan curbe about a bunch of singularities, the only thing that matters are all the residues of the singularity contained within $\gamma$. As we've seen so far, we do not have this result--excepting in the separable case. However, we do get something close. 

We've defined the relation;

\[
\int_{\gamma} f(s,z)\,ds\bullet z \simeq \int_{\varphi} f(s,z)\,ds\bullet z\\
\]

If there is some arc $\tau:[0,1]\to\mathcal{S}$ such that,

\[
\int_\tau f(s,z) \,ds\bullet \int_\gamma f(s,z)\,ds \bullet z = \int_\varphi f(s,z)\,ds\bullet \int_{\tau}f(s,z)\,ds\bullet z\\
\]

If we call $\mathcal{P}$ the group of compositional integrals with integrand $f$, then we aim to consider now the group,

\[
\mathcal{A} = \mathcal{P} / \simeq\\
\]

Which is to define a topological group. In this space the class of functions defined by $\Rsd(f,\zeta;z)$ is no longer a class of functions, but a singular element. For convenience integrals done in this space shall be written,

\[
\oint_\gamma f(s,z)\,ds\bullet z\\
\]

By which the space $\mathcal{P}/\simeq$ is somewhat equivalent to the normal Cauchy analysis. Closed contours are equivalent depending on the singularities within (and their winding numbers). In this space $\oint$ behaves a lot like $\oint$ in its typical interpretation as a contour integral in Cauchy's analysis (the additive case). A je ne sais quoi feature of this contour integral is that, about the congruent integral, is that if $\sigma:[a,b] \to \mathcal{S}$ is a non-closed arc; and doesn't encircle any poles, then,

\[
\oint_\sigma f(s,z) \,ds\bullet z = \int_\sigma f(s,z) \,ds\bullet z\\
\]

The only time $\oint$ differs from $\int$, is when performed over a closed contour; or when wrapping around a pole. On top of this,

\[
\int_\sigma \bullet \oint = \oint \bullet \int_\sigma\\
\]

In which case, $\oint \,ds\bullet z$ sort of behaves like the usual integral $\int \,ds$ over a closed contour. Where now the fundamental statement of the congruent integral is that,

\begin{theorem}[The Fundamental Theorem Of The Congruent Integral]\label{thmFCINT}
Let $\mathcal{S}$ be a simply connected domain. Assume $f(s,z) : \mathcal{S} \times \mathbb{C} \to \widehat{\mathbb{C}}$ is a meromorphic function with isolated singularities. Let $\gamma, \varphi:[a,b] \to \mathcal{S}$ be Jordan curves oriented the same, such that both contain the same singularities of $f$; then,

\[
\oint_\gamma f(s,z) \,ds\bullet z = \oint_\varphi f(s,z)\,ds\bullet z\\
\]
\end{theorem}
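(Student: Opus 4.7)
The theorem is, once unpacked, essentially the assertion that the relation $\simeq$ collapses the path dependence observed for $\int$. My plan is therefore very short: reduce the claim to the already-proved conjugate-similarity of $\int_\gamma$ and $\int_\varphi$, and then note that the congruent integral $\oint$ is, by construction, the image of $\int$ under the quotient map $\mathcal{P} \to \mathcal{P}/\!\simeq \,=\, \mathcal{A}$.

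First I would invoke Theorem \ref{thmPTHINDP} directly. The hypotheses of \ref{thmFCINT} match the hypotheses of \ref{thmPTHINDP} verbatim: $\mathcal{S}$ is simply connected, $f$ is meromorphic with isolated singularities, and $\gamma,\varphi$ are Jordan curves with matching orientation enclosing the same poles. That theorem produces an arc $\tau\subset\mathcal{S}$ for which
\[
\int_\tau f(s,z)\,ds \bullet \int_\gamma f(s,z)\,ds\bullet z \;=\; \int_\varphi f(s,z)\,ds\bullet \int_\tau f(s,z)\,ds\bullet z,
\]
which is precisely the statement $\int_\gamma f \simeq \int_\varphi f$ in $\mathcal{P}$.

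Second, I would translate this conjugacy into an equality in the quotient. By the definition of $\mathcal{A} = \mathcal{P}/\!\simeq$, two elements of $\mathcal{P}$ that are $\simeq$-equivalent map to the same element of $\mathcal{A}$, and $\oint$ is defined as the class-valued symbol under this projection. Consequently the image of $\int_\gamma f$ agrees with the image of $\int_\varphi f$ in $\mathcal{A}$, which is exactly $\oint_\gamma f(s,z)\,ds\bullet z = \oint_\varphi f(s,z)\,ds\bullet z$.

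The only genuine obstacle I foresee is not in the argument above but in a preliminary well-definedness check that deserves at least a line: one needs $\simeq$ to be a \emph{congruence} for the composition $\bullet$, so that $\mathcal{P}/\!\simeq$ inherits a group (or at least monoid) structure and the quotient symbol $\oint$ is meaningful. The author has signalled this earlier by describing $\mathrm{Rsd}$ as a \emph{normal set}, and by proving the identities $\int_\sigma\bullet\oint = \oint\bullet\int_\sigma$ for non-closed $\sigma$. So my writeup would begin by briefly verifying that if $A\simeq A'$ and $B\simeq B'$ then $A\bullet B \simeq A'\bullet B'$ (immediate from the definition by stitching the conjugating arcs together), which legitimizes the quotient and thus the symbol $\oint$. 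After this single bookkeeping step, the theorem is a direct corollary of Theorem \ref{thmPTHINDP} together with the definition of $\mathcal{A}$.
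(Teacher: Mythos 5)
Your proposal is correct and is essentially the argument the paper intends: the theorem appears there without an explicit proof precisely because it is the image of Theorem \ref{thmPTHINDP} under the quotient map $\mathcal{P} \to \mathcal{P}/\simeq$, which is exactly your two-step reduction. One small caution: your claim that the congruence property ($A \simeq A'$ and $B \simeq B'$ imply $A \bullet B \simeq A' \bullet B'$) is ``immediate by stitching the conjugating arcs together'' is optimistic---with the single-arc definition of $\simeq$ the two conjugators need not combine into one, which is part of why the paper later broadens $\simeq$ to allow sequences of conjugations---but that check is not actually needed for the stated equality, which only requires that $\simeq$ be an equivalence relation so that $\simeq$-equivalent elements of $\mathcal{P}$ determine the same class.
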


We call this the fundamental theorem because one can do everything one does in Cauchy's analysis if one has this. Insofar, as one can force Taylor's theorem; and the interchange of sums with integrals. The only necessity being, when we make the pull-back to the compositional integral, that we add a layer of conjugations to each element. Much of the operations may appear gauche, but of course; the oddness of the object requires the oddness of the approach.

\section{Additivity Theorem of Contour Integration}\label{secAT3}
\setcounter{equation}{0}

In this section the following schema is used. The set $\mathcal{S}$ is a simply connected domain in $\mathbb{C}$. The function $\phi(s,z) : \mathcal{S} \times \mathbb{C} \to \mathbb{C}$ is holomorphic. The functions $f(s),g(s):\mathcal{S} \to \widehat{\mathbb{C}}$ are meromorphic functions. The curve $\gamma:[a,b] \to \mathcal{S}$ is a Jordan curve oriented positively.

The goal of this section is to turn sums into compositions. No less, we aim to take them back and forth isomorphically.  Compositions become additions; this is quite the anomaly. The goal is to extend The Additive Homomorphism Theorem \ref{thmHOM} into a more general result.

Therein, we aim to show,

\[
\oint_\gamma \big{(}f(s) + g(s)\big{)}\phi(s,z)\,ds\bullet z = \oint_\gamma f(s)\phi(s,z)\,ds\bullet \oint_\gamma g(s)\phi(s,z)\,ds\bullet z\\
\]

Now this result follows from The Residual Class Theorem \ref{thmRES} and a separate lemma. The description of this lemma is fairly straight forward. It has its place in Cauchy's analysis, but it's a bit of a triviality in that scenario. In our present situation we have to talk more strictly about what the congruent integral means in these situations.

Introduce the term $h(s) : \mathcal{S} \to \mathbb{C}$; which we assume to be holomorphic. Then the idea is,

\[
\int_\tau \bullet \int_\gamma \bullet  \int_{\tau^{-1}}\big{(}f(s) + h(s)\big{)}\phi(s,z)\,ds\bullet z = \int_\tau \bullet \int_\gamma \bullet \int_{\tau^{-1}}f(s)\phi(s,z)\,ds\bullet z\\
\]

Where each of these conjugations are done across different functions. So we can think of this being as an equality between classes, and not within the same class. Unless we were to rewrite a bit more comprehensively what the congruent integral means.

If we say that,

\[
A \simeq B\\
\]

If there exists some sequence of meromorphic function $u_j(s) : \mathcal{S} \to \widehat{\mathbb{C}}$, and a sequence of arcs $\tau_j:[0,1] \to \mathcal{S}$ such that,

\[
\OmSum_j \int_{\tau_j} u_j(s)\phi(s,z)\,ds\bullet A \bullet \MhSum_j \int_{\tau_j^{-1}} u_j(s)\phi(s,z)\,ds\bullet z = B\\
\]

Then, in this language, and once modding out by this equivalence relation, and defining a congruent integral in this manner, then:

\[
\oint_\gamma \big{(}f(s) + h(s)\big{)}\phi(s,z)\,ds\bullet z = \oint_\gamma f(s)\phi(s,z)\,ds\bullet z\\
\]

Which is really just a fancy way of keeping track of the conjugations. But without this fancy way, this paper would contain too many equations going off the page.

To understand the significance of this result will culminate in the desired result. But foregoing the significance at the moment; we'll prove it. For the sake of comprehension; we'll prove it with respect to the residual. This affords us a more atomic understanding.

\begin{lemma}\label{lmaAT4A}
Suppose $\mathcal{S} \subset \mathbb{C}$ is a simply connected domain. Let $\phi(s,z): \mathcal{S} \times \mathbb{C} \to \mathbb{C}$ be a holomorphic function. Let $f(s) : \mathcal{S} \to \mathbb{C}$ be a meromorphic function with a pole at $\zeta \in \mathcal{S}$. Let $h(s):\mathcal{S} \to \mathbb{C}$ be a holomorphic function in a neighborhood of $\zeta$.

\[
\Rsd\Big{(}\big{(}f(s) + h(s)\big{)}\phi(s,z), \zeta;z\Big{)} \simeq \Rsd \big{(}f(s)\phi(s,z), \zeta;z\big{)}\\
\]

Which is to say, there are two arcs $\tau_0,\tau_1 \subset\mathcal{S}$ and two functions $u_0, u_1 : \mathcal{S} \to \widehat{\mathbb{C}}$, such that if $A \in \Rsd (f+h)$ and $B \in \Rsd f$,

\[
\int_{\tau_0} \bullet \int_{\tau_1} \bullet A \bullet \int_{\tau_1^{-1}} \bullet \int_{\tau_0^{-1}} = B\\
\]

Where,

\[
\int_{\tau_i} = \int_{\tau_i}u_i(s)\phi(s,z)\,ds\bullet z\\
\]
\end{lemma}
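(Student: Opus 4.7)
I would choose a small Jordan curve $\gamma$ around $\zeta$ lying inside a disk $D$ on which $h$ is holomorphic, and work with the representatives $A = \int_{\gamma}(f+h)\phi\,ds\bullet z \in \Rsd((f+h)\phi,\zeta;z)$ and $B = \int_{\gamma}f\phi\,ds\bullet z \in \Rsd(f\phi,\zeta;z)$. The key observation is that $h(s)\phi(s,z)$ is holomorphic on the simply-connected region $D\times\mathbb{C}$, so Theorem \ref{thmC3A} gives $\int_{\gamma} h\phi\,ds\bullet z = z$, and sub-arc integrals of $h\phi$ within $D$ are path-independent by Theorem \ref{thmPTHIND}. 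This is the topological content that makes the $h$ contribution collapse after proper bookkeeping.

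The core mechanism of proof is a Riemann-composition splitting. At each step of a partition of $\gamma$, a direct Taylor expansion yields the infinitesimal identity
\[
z + (f+h)(s)\phi(s,z)\Delta s \;=\; \bigl(z + f(s)\phi(s,z)\Delta s\bigr)\bullet\bigl(z + h(s)\phi(s,z)\Delta s\bigr)\bullet z + O(\Delta s^{2}),
\]
so that the partial compositions of $A$ coincide, modulo a vanishing additive error, with an interleaved word built from ``$f$-atoms'' and ``$h$-atoms.'' I would then transport all $h$-atoms to one side of the word by adjacent transpositions. Unlike in the separable case of Theorem \ref{thmHOM}, individual transpositions here produce nontrivial commutator corrections; crucially, each correction is itself a \emph{conjugation} of an $f$-atom by a small $h$-atom rather than an additive perturbation. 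In the compositional non-abelian group these telescope into a conjugator built from partial $h\phi$-integrals along sub-arcs of $\gamma$. After full rearrangement the gathered $h$-atoms compose to $\int_{\gamma}h\phi\,ds\bullet z = z$, and only the conjugator survives. Folding this conjugator into two nested layers by splitting $\gamma$ at its base point yields the two-arc structure $\int_{\tau_{0}}u_{0}\phi\bullet\int_{\tau_{1}}u_{1}\phi$ required by the statement, with $u_{0}, u_{1}$ constructed from $h$ (and, where necessary, from $f+h$).

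The main obstacle is the bookkeeping of the commutator corrections and verification that their limit is a compositional integral of the stated form. A naive pair-counting worries that $O(n^{2})$ transpositions at $O(1/n^{2})$ apiece could generate a non-vanishing $O(1)$ residual error; the resolution is that these are multiplicative conjugations in a non-abelian group, not additive errors, and as such they compose cleanly into partial contour integrals whose convergence is controlled by the Compactly Normal Convergence Theorem \ref{ThmNormCvg} applied to the $h\phi$-atoms and their composites. Identifying the limit as the stipulated two-arc conjugator requires using the path-independence of $h\phi$-integrals in $D$ to re-fold the accumulated partial compositions into the prescribed $\tau_{0}, \tau_{1}$ and to read off $u_{0}, u_{1}$ explicitly. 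Once this identification is in place, the conjugation identity demanded by the lemma follows at once, and the equivalence $\Rsd((f+h)\phi,\zeta;z) \simeq \Rsd(f\phi,\zeta;z)$ is obtained.
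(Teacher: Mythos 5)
Your proposal diverges from the paper's proof, and the divergence introduces a genuine gap at the rearrangement step. Write the partial composition of $A$ as the interleaved word $F_0\bullet H_0\bullet F_1\bullet H_1\bullet\cdots\bullet F_{n-1}\bullet H_{n-1}$ with $F_j = z+f\phi(s_j,z)\Delta s_j$ and $H_j = z+h\phi(s_j,z)\Delta s_j$ (your splitting identity and the $O(\Delta s^2)$ error accounting are fine). Setting $P_j = H_0\bullet\cdots\bullet H_{j-1}$, the exact rearrangement is
\[
F_0\bullet H_0\bullet\cdots\bullet F_{n-1}\bullet H_{n-1} \;=\; \Big(\OmSum_{j=0}^{n-1} P_j\bullet F_j\bullet P_j^{-1}\Big)\bullet P_n,
\]
and while $P_n$ indeed tends to the identity by Theorem \ref{thmC3A}, each $F_j$ is conjugated by a \emph{different} $P_j$, and adjacent conjugators do not cancel ($P_j^{-1}\bullet P_{j+1}=H_j\neq z$). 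So the corrections do not telescope into a single outer conjugator. In the limit the $f$-part converges to a contour integral of the gauge-transformed integrand $\psi(s,z)=G_s'(G_s^{-1}(z))\,f(s)\phi(s,G_s^{-1}(z))$, where $G_s$ is the partial $h\phi$-integral along the sub-arc of $\gamma$ up to $s$. This is a closed-contour integral of a \emph{different} integrand, not a conjugate $C\bullet\int_\gamma f\phi\,ds\bullet C^{-1}$ of $B$; showing it lies in $\Rsd(f\phi,\zeta;z)$ is essentially the lemma itself, so the argument is circular at the point where you assert ``only the conjugator survives.''

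The paper's proof sidesteps this entirely by exploiting the one degree of freedom you froze: the radius of the contour. It takes $\gamma_\delta(x)=\zeta+\delta e^{ix}$ and observes that $\delta\,h(\zeta+\delta e^{ix})\phi(\zeta+\delta e^{ix},z)e^{ix}\to 0$ uniformly as $\delta\to 0$ (arc length $O(\delta)$ times a bounded holomorphic integrand), so the partial compositions for $(f+h)\phi$ and for $f\phi$ differ by $O(\delta)$ termwise, hence the integrals differ by $O(\delta)$; since every $\int_{\gamma_\delta}$ represents the same residual class up to conjugation (Theorem \ref{thmWllDef}), letting $\delta\to 0$ forces the two classes to coincide. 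No rearrangement of atoms is needed, and the $h$-contribution dies because the contour shrinks, not because the $h$-atoms commute past the $f$-atoms. Your approach could only be repaired by also letting the contour shrink so that $G_s\to\mathrm{id}$ uniformly and $\psi\to f\phi$ --- at which point you have reproduced the paper's argument with extra machinery.
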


\begin{proof}
Recalling that this residual does not depend on our closed contour $\gamma$ (upto conjugation), we can choose $\gamma(x) = \zeta + \delta e^{ix} : [0,2\pi] \to \mathcal{S}$; here $\delta > 0$ and is arbitrarily small. Then,

\[
\int_\gamma \big{(}f(s) + h(s)\big{)}\phi(s,z)\,ds\bullet z = \int_0^{2\pi} i\delta\big{(}f(\zeta + \delta e^{ix}) + h(\zeta + \delta e^{ix})\big{)} \phi(\zeta + \delta e^{ix},z)e^{ix}\,dx\bullet z\\
\]

Now,

\[
\delta h(\zeta + \delta e^{ix})\phi(\zeta + \delta e^{ix},z) e^{ix} \to 0 \,\,\text{as}\,\,\delta \to 0\\
\]

Which is an easy consequence of $h(s)\phi(s,z)$ being holomorphic in a neighborhood of $\zeta$. So in letting $\delta \to 0$, and interchanging the limit through the integral, we're all done. Interchanging the limit poses no problem; if we know the intent of how we want to interchange the limit. Take its partial compositions, and observe,

\begin{eqnarray*}
\OmSum_j z + i\delta e^{ix_j}\big{(}f(\zeta + \delta e^{ix_j}) + h(\zeta + \delta e^{ix_j})\big{)}\phi(\zeta + \delta e^{ix_j},z)\Delta x_j \bullet z -\\
- \OmSum_j z + i\delta e^{ix_j}f(\zeta + \delta e^{ix_j})\phi(\zeta + \delta e^{ix_j},z)\Delta x_j \bullet z = \mathcal{O}(\delta)\\
\end{eqnarray*}

This follows from basic properties of composition. Which is to say if $q_j(\delta,z) = (1 + \mathcal{O}(\delta)) p_j(z)$, then $\OmSum_j q_j(\delta,z)\bullet z = \OmSum_j p_j(z)\bullet z + \mathcal{O}(\delta)$. Remembering this is compactly normally convergent, so the $\mathcal{O}$ term is very nice. As $\Delta x_j \to 0$, and the fact the limits are independent of $\delta$; we must have,

\begin{eqnarray*}
\lim_{\Delta x_j \to 0} \OmSum_j z  + i\delta e^{ix_j}\big{(}f(\zeta + \delta e^{ix_j}) + h(\zeta + \delta e^{ix_j})\big{)}\phi(\zeta + \delta e^{ix_j},z)\Delta x_j \bullet z \simeq\\
\simeq \lim_{\Delta x_j \to 0}\OmSum_j z + i\delta e^{ix_j}f(\zeta + \delta e^{ix_j})\phi(\zeta + \delta e^{ix_j},z)\Delta x_j \bullet z\\
\end{eqnarray*}

Which concludes the proof.
\end{proof}

With this lemma, we're essentially at the desired result of this section. If $f(s)$ and $g(s)$ share no common poles then,

\[
\oint_\gamma \big{(}f(s) + g(s)\big{)}\phi(s,z)\,ds\bullet z = \oint_\gamma f(s)\phi(s,z)\,ds\bullet \oint_\gamma g(s)\phi(s,z)\,ds\bullet z\\
\]

As the residues of the combined function $f+g$ depend on $f$ and $g$ independently; and by Lemma \ref{lmaAT4A} $\Rsd((f+g)\phi,\zeta;z) \simeq \Rsd(f\phi, \zeta;z)$ if $\zeta$ is not a pole of $g$; and vice versa. The contour can be organized as we wish, which allows us to put the singularities of $f$ first and the singularities of $g$ second. So, the only issue we may have with this result is if $f$ and $g$ share poles. This can be better explained one step at a time. Assume $g(\zeta,z) = f(\zeta,z) = \infty$. The aim is to show that,

\[
\Rsd((f+g)\phi,\zeta;z) \simeq \Rsd(f\phi,\zeta;z)\bullet \Rsd(g\phi,\zeta;z)\bullet z\\
\]

Now this result is certainly true if we perturb $g$ to $\tilde{g}$ such that $\tilde{g}(\tilde{\zeta}) = \infty$ and $\tilde{g} \to g$ and $\tilde{\zeta} \to \zeta$. By continuity the problem is a write-off. Therefore we arrive at the next theorem.

\begin{theorem}[The Additivity Theorem]\label{thmADD}
Suppose $\mathcal{S} \subset \mathbb{C}$ is a simply connected domain. Suppose $f(s)$ and $g(s)$ are meromorphic functions which take $\mathcal{S} \to \widehat{\mathbb{C}}$. Let $\phi(s,z) : \mathcal{S} \times \mathbb{C} \to \mathbb{C}$ be holomorphic. Let $\gamma : [a,b] \to \mathcal{S}$ be a Jordan curve. Then,

\[
\oint_\gamma \big{(}f(s) + g(s)\big{)}\phi(s,z) \,ds\bullet z = \oint_\gamma f(s)\phi(s,z)\,ds\bullet \oint_\gamma g(s)\phi(s,z)\,ds\bullet z\\
\]
\end{theorem}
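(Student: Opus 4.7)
The plan is to build the identity pole-by-pole using The Residual Class Theorem~\ref{thmRES} as a skeleton, and to handle the arithmetic of residuals at individual poles via Lemma~\ref{lmaAT4A}. Since the congruent integral $\oint$ equates any two integrals around Jordan curves enclosing the same singularities (Theorem~\ref{thmFCINT}), I may freely reorder the poles in the resulting decomposition. First I would enumerate the poles of $f+g$ inside $\gamma$ as $\{\zeta_j\}$; these are contained in the union of the poles of $f$ and the poles of $g$.

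In the generic case where $f$ and $g$ share no poles, each $\zeta_j$ lies in one of two disjoint buckets: the $f$-poles $\{\alpha_k\}$ and the $g$-poles $\{\beta_l\}$. At an $f$-pole $\alpha_k$ the function $g$ is holomorphic, so Lemma~\ref{lmaAT4A} yields $\Rsd((f+g)\phi,\alpha_k;z)\simeq\Rsd(f\phi,\alpha_k;z)$; symmetrically at a $g$-pole. Applying The Residual Class Theorem to the left-hand side and then choosing a Jordan curve (homotopically equivalent to $\gamma$ by Theorem~\ref{thmFCINT}) whose traversal visits the $f$-poles before the $g$-poles, I obtain
\[
\oint_\gamma (f+g)\phi\,ds\bullet z \simeq \OmSum_k \Rsd(f\phi,\alpha_k;z)\bullet \OmSum_l \Rsd(g\phi,\beta_l;z)\bullet z,
\]
which under a second application of The Residual Class Theorem is precisely the desired composition of $\oint f\phi$ with $\oint g\phi$.

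The main obstacle is the case in which $f$ and $g$ share a pole $\zeta$: I need $\Rsd((f+g)\phi,\zeta;z)\simeq\Rsd(f\phi,\zeta;z)\bullet\Rsd(g\phi,\zeta;z)\bullet z$, a genuine statement about residues rather than a consequence of holomorphy. As the author foreshadows, the route is a perturbation argument: replace $g$ by $\tilde g_\varepsilon(s)=g(s-\varepsilon)$ for small $\varepsilon\in\mathbb{C}$, so that the poles of $\tilde g_\varepsilon$ are translated off those of $f$; apply the no-shared-poles case already proved to $f+\tilde g_\varepsilon$; and then let $\varepsilon\to 0$. Continuity of compositional integrals in a parameter, which follows from the compactly normal convergence underlying Theorem~\ref{thmC2A} together with The Triangle Inequality Theorem~\ref{thmTriInq}, gives convergence of each side to the target identity.

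The subtle point I expect to require the most care is that the limit $\varepsilon\to 0$ respects the conjugation equivalence $\simeq$. For each $\varepsilon>0$ the two sides of the identity for $f+\tilde g_\varepsilon$ agree in the congruent algebra $\mathcal{A}=\mathcal{P}/\!\!\simeq$, meaning that pulled back to $\mathcal{P}$ they differ by conjugation by some integral $\int_{\tau_\varepsilon}$. Rather than extract a limiting $\tau$, I plan to carry out the perturbation argument entirely inside the congruent integral: by Theorem~\ref{thmFCINT} a contour enclosing both $\zeta$ (a pole of $f$) and $\zeta+\varepsilon$ (a pole of $\tilde g_\varepsilon$) deforms, in the limit, into any contour enclosing the common pole, and the decomposition of the integrand $f+\tilde g_\varepsilon$ into $f$ and $\tilde g_\varepsilon$ persists under the limit by the continuity properties above. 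This reduces the shared-pole case to the no-shared-pole case plus a single continuity statement on $\oint$, which is where I expect the bulk of the technical bookkeeping to live.
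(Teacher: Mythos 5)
Your proposal matches the paper's own argument essentially step for step: decomposition via The Residual Class Theorem~\ref{thmRES}, Lemma~\ref{lmaAT4A} for poles belonging to only one of $f$ or $g$, reordering via the conjugation equivalence, and a perturbation-plus-continuity argument for shared poles (which the paper itself sketches just before stating the theorem). Your treatment of the shared-pole case is, if anything, more careful than the paper's ``by continuity the problem is a write-off.''
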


\begin{proof}
Let $\{\zeta_j\}_{j=1}^n$ be a list of the poles of $(f+g)\phi$ contained within $\gamma$. By The Residual Class Theorem \ref{thmRES} we know that,

\[
\int_\gamma \big{(}f(s) + g(s)\big{)}\phi(s,z) \,ds\bullet z \simeq \OmSum_{j=1}^n \Rsd\big{(}(f+g)\phi, \zeta_j;z\big{)}\bullet z
\]

By Lemma \ref{lmaAT4A}, if $\zeta_j$ is a pole solely of $f$ then $\Rsd((f+g)\phi, \zeta_j;z) \simeq \Rsd(f\phi,\zeta_j;z)$; similarly if $\zeta_j$ is a pole solely of $g$. If $\zeta_j$ is a pole of both then $\Rsd((f+g)\phi, \zeta_j;z) \simeq \Rsd(f\phi,\zeta_j;z) \bullet \Rsd(g\phi,\zeta_j;z) \bullet z$. All of these residuals commute (in the quotient) by organizing the contour any manner we see fit; and therefore the result follows.
\end{proof}

\section{Infinite Compositions of Contour Integrals}\label{secAT4}
\setcounter{equation}{0}

In this section the following schema is used. As before, the set $\mathcal{S}$ is a simply connected domain in $\mathbb{C}$. The sequence of functions $f_n(s):\mathcal{S} \to \widehat{\mathbb{C}}$ are meromorphic. Further, the functions $f_n(s)$ are normally summable on their domain of holomorphy (which is to say on its compact subsets). The function $f:\mathcal{S}  \to \widehat{\mathbb{C}}$ is used to designate the function,

\[
f(s) = \sum_{n=0}^\infty f_n(s)
\]

As in the last section $\gamma$ is a Jordan curve, $\gamma:[a,b] \to \mathcal{S}$. Additionally, as in all of the above; $\phi(s,z):\mathcal{S} \times \mathbb{C} \to \mathbb{C}$ is a holomorphic function. Accustoming ourselves with this we write what we aim to show in this section. It is little different than what we just wrote.

\[
\oint_\gamma f(s)\phi(s,z)\,ds\bullet z = \OmSum_{n=0}^\infty \oint_\gamma f_n(s)\phi(s,z)\,ds\bullet z\\
\]

And,

\[
\oint_\gamma f(s)\phi(s,z)\,ds\bullet z = \MhSum_{n=0}^\infty \oint_\gamma f_n(s)\phi(s,z)\,ds\bullet z\\
\]

Just as well the ordering of these infinite compositions is entirely arbitrary. Now, the work to do is quite a handful. To begin, it's required we show convergence of these expressions. In order to do this we'll play a little fast and loose, but with the exception of a detailed proof that if ${\displaystyle \sum_j ||\phi_j - z|| <\infty}$ is summable, then so is ${\displaystyle \sum_j ||h\bullet \phi_j \bullet h^{-1} - z|| <\infty}$--everything is sound. The author thinks this apparently true; it is of the reader to prove it if they don't take my word.

\begin{theorem}\label{thmCVG}
Suppose $\mathcal{S}$ is a simply connected domain in $\mathbb{C}$. Suppose $f_n(s) : \mathcal{S} \to \widehat{\mathbb{C}}$ is a sequence of meromorphic functions. Suppose these functions are normally summable on their domain of holomorphy. 

Let $\phi(s,z) : \mathcal{S} \times \mathbb{C} \to \mathbb{C}$ be a holomorphic function. Suppose $\gamma: [a,b] \to \mathcal{S}$ is a Jordan curve. Then the following infinite compositions converge uniformly on their domain of holomorphy,

\[
\OmSum_{n=0}^\infty \oint_\gamma f_n(s)\phi(s,z)\,ds\bullet z
\]

And,

\[
\MhSum_{n=0}^\infty \oint_\gamma f_n(s)\phi(s,z)\,ds\bullet z\\
\]
\end{theorem}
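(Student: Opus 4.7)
The plan is to reduce the claim to The Compactly Normal Convergence Theorem \ref{ThmNormCvg}, which asserts that both $\OmSum$ and $\MhSum$ of a sequence $\psi_n(z)$ converge uniformly on compact sets as soon as $\sum_n \| \psi_n(z) - z \|_{\mathcal{K}} < \infty$ for every compact disk $\mathcal{K}$ in the working domain. So the concrete goal is to verify this normal summability condition with $\psi_n(z) = \oint_\gamma f_n(s)\phi(s,z)\,ds\bullet z$.

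First I would dispatch the easier, ordinary-integral version. By The Triangle Inequality Theorem \ref{thmTriInq}, for any compact disk $\mathcal{K}$ on which the partial compositions are continuous there is a larger compact disk $\mathcal{L}$ with
\[
\Big{|}\Big{|}\int_\gamma f_n(s)\phi(s,z)\,ds\bullet z - z\Big{|}\Big{|}_{z \in \mathcal{K}} \le \int_\gamma ||f_n(s)\phi(s,z)||_{z \in \mathcal{L}}\,|ds| \le \len(\gamma) \cdot ||f_n||_\gamma \cdot ||\phi||_{\gamma \times \mathcal{L}}.
\]
Since $\gamma$ lies in the domain of holomorphy of each $f_n$, the hypothesis of normal summability gives $\sum_n ||f_n||_\gamma < \infty$, while $||\phi||_{\gamma \times \mathcal{L}}$ and $\len(\gamma)$ are fixed finite constants. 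This verifies the hypothesis of Theorem \ref{ThmNormCvg}, so both $\OmSum$ and $\MhSum$ of the integrals $\int_\gamma f_n(s)\phi(s,z)\,ds\bullet z$ converge uniformly on $\mathcal{K}$.

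The main obstacle is transferring this from $\int_\gamma$ to $\oint_\gamma$. Each representative of the class $\oint_\gamma f_n(s)\phi(s,z)\,ds\bullet z$ has, by the definition of $\simeq$, the form $h_n \bullet \big{(}\int_\gamma f_n(s)\phi(s,z)\,ds\bullet z \big{)} \bullet h_n^{-1}$ for some auxiliary arc-integral $h_n(z) = \int_{\tau_n}u_n(s)\phi(s,z)\,ds \bullet z$. Here I would invoke the claim flagged in the theorem's preamble: normal summability is preserved under conjugation. This is a routine but necessary estimate, based on the identity
\[
|h(\psi_n(h^{-1}(z))) - z| = |h(\psi_n(w)) - h(w)| \le \sup_{\zeta \in \mathcal{L}'} |h'(\zeta)| \cdot |\psi_n(w) - w|
\]
with $w = h^{-1}(z)$ and $\mathcal{L}'$ a compact disk containing both $\psi_n(h^{-1}(\mathcal{K}))$ and $h^{-1}(\mathcal{K})$, together with an absorption of $h^{-1}(\mathcal{K})$ into a larger compact on which the $\psi_n - z$ bound already holds.

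Combining this conjugation-stability estimate with the $\int_\gamma$ bound from the second paragraph yields $\sum_n \|\oint_\gamma f_n(s)\phi(s,z)\,ds\bullet z - z\|_{\mathcal{K}} < \infty$, and a final application of Theorem \ref{ThmNormCvg} delivers uniform convergence of both $\OmSum$ and $\MhSum$ on compact subsets of the joint domain of holomorphy. The subtlest technical point throughout is simply the bookkeeping of the conjugating factors $h_n$: one must make sure a single pair of compact disks $(\mathcal{K},\mathcal{L})$ can be chosen so that all of the $h_n$'s preimages and images stay within $\mathcal{L}$, which is possible because each $h_n$ is itself a short-arc contour integral and hence nearly the identity on the relevant compact.
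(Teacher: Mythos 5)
Your proposal is correct and follows essentially the same route as the paper: reduce to The Compactly Normal Convergence Theorem \ref{ThmNormCvg}, bound $\|\int_\gamma f_n\phi\,ds\bullet z - z\|_{\mathcal{K}}$ via The Triangle Inequality Theorem \ref{thmTriInq} and the normal summability of the $f_n$, and pass from $\int_\gamma$ to $\oint_\gamma$ by the stability of normal summability under conjugation. You in fact go slightly further than the paper, which explicitly leaves the conjugation-stability estimate to the reader, whereas you sketch it via the derivative bound and flag the need for uniform control of the conjugating factors $h_n$ on a common compact set.
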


\begin{proof}
By The Compactly Normal Convergence Theorem \ref{ThmNormCvg}, the first thing we need is that,

\[
\sum_{n=0}^\infty \Big{|}\Big{|}\oint_\gamma f_n(s)\phi(s,z)\,ds\bullet z - z\Big{|}\Big{|}_{\mathcal{K}} < \infty
\]

But by the conjugation, this is equivalent to,

\[
\sum_{n=0}^\infty \Big{|}\Big{|}\int_\gamma f_n(s)\phi(s,z)\,ds\bullet z - z\Big{|}\Big{|}_{\mathcal{K}} < \infty
\]

Because if ${\displaystyle \sum_j ||\phi_j - z||_{\mathcal{K}} <\infty}$ for all $\mathcal{K}$, then ${\displaystyle \sum_j ||h \bullet \phi_j \bullet h^{-1} - z||_{\mathcal{K}} <\infty}$ for all $\mathcal{K}$. So long as $h$ is holomorphic and non-constant (in this case a contour integral).

For all compact disks $\mathcal{K}\subset \mathcal{H}$ where $\mathcal{H}$ is a domain of holomorphy for all $\int_\gamma f_n \phi$. There exists a larger compact set $\mathcal{K} \subset \mathcal{L}$ such that, by The Triangle Inequality Theorem \ref{thmTriInq},

\[
\Big{|}\Big{|}\int_\gamma f_n(s)\phi(s,z)\,ds\bullet z - z\Big{|}\Big{|}_{\mathcal{K}} \le \int_\gamma |f_n(s)|\big{|}\big{|}\phi(s,z)\big{|}\big{|}_{\mathcal{L}}\,|ds|\\
\]

This is certainly a summable series because $f_n$ is normally summable.
\end{proof}

The author has glossed over a few nefarious details in the above proof. But if one pays close attention to the language everything is right and in order. The domain of holomorphy in $z$ was treated fairly non-chalantly. But since we have isolated singularities for each $f_n$, the final result only has isolated singularities in $z$; and diverges on a set of measure zero.

Now from this convergence theorem, it really isn't too hard to see our main result. By The Additivity Theorem \ref{thmADD}, it's turned into a pulling the limit through the integral kind of problem.

\begin{theorem}[The Summation Theorem]\label{thmSUM}
Suppose $\mathcal{S}$ is a simply connected domain in $\mathbb{C}$. Suppose $f_n(s) : \mathcal{S} \to \widehat{\mathbb{C}}$ is a sequence of meromorphic functions. Suppose these functions are normally summable on their domain of holomorphy. Suppose $\gamma: [a,b] \to \mathcal{S}$ is a Jordan curve. Additionally, $\phi(s,z):\mathcal{S} \times \mathbb{C} \to \mathbb{C}$ is holomorphic.

Then the following infinite compositions converge uniformly on their domain of holomorphy to,

\[
\oint_\gamma \sum_{n=0}^\infty f_n(s) \phi(s,z)\,ds\bullet z = \OmSum_{n=0}^\infty \oint_\gamma f_n(s)\phi(s,z)\,ds\bullet z\\
\]

And,

\[
\oint_\gamma \sum_{n=0}^\infty f_n(s) \phi(s,z)\,ds\bullet z = \MhSum_{n=0}^\infty \oint_\gamma f_n(s)\phi(s,z)\,ds\bullet z\\
\]
\end{theorem}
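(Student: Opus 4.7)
The plan is to establish the $\OmSum$ identity first and observe that the $\MhSum$ identity follows by a nearly identical argument. The natural route is induction plus a limit pass: for every finite $N$, The Additivity Theorem \ref{thmADD} iterated $N$ times gives
\[
\oint_\gamma \sum_{n=0}^N f_n(s)\phi(s,z)\,ds\bullet z = \OmSum_{n=0}^N \oint_\gamma f_n(s)\phi(s,z)\,ds\bullet z
\]
as an equality in the quotient $\mathcal{P}/\simeq$. The whole theorem therefore reduces to justifying the passage $N \to \infty$ on both sides simultaneously.

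First I would handle the right-hand side, where Theorem \ref{thmCVG} already supplies uniform convergence on compact subsets of the common domain of holomorphy; nothing further is needed. For the left-hand side I would let $F_N(s) = \sum_{n=0}^N f_n(s)$. Normal summability of $\{f_n\}$ ensures $F_N \to f$ uniformly on compact subsets of $\mathcal{S}$ avoiding the poles, and in particular on a tubular neighborhood of $\gamma$. Combining this with The Triangle Inequality Theorem \ref{thmTriInq} one gets
\[
\Big\|\oint_\gamma (F_N(s) - f(s))\phi(s,z)\,ds\bullet z - z\Big\|_{\mathcal{K}} \le \int_\gamma |F_N(s)-f(s)|\cdot \|\phi(s,z)\|_{\mathcal{L}}\,|ds| \to 0
\]
for each compact $\mathcal{K}$ in the domain of holomorphy. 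Feeding this into the difference $\oint_\gamma F_N \phi\,ds\bullet z - \oint_\gamma f\phi\,ds\bullet z$ and using the compactly normal-convergence control of partial compositions, one obtains uniform convergence on both sides.

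The principal obstacle is keeping the congruent-integral bookkeeping honest, because $\oint$ lives on the quotient $\mathcal{P}/\simeq$ and the equality at stage $N$ is only an equality of equivalence classes. One must show that the sequences of conjugating arcs supplied by Lemma \ref{lmaAT4A} and by the proof of The Additivity Theorem \ref{thmADD} can be chosen to lie in a common compact subset of $\mathcal{S}$ uniformly in $N$, so that the chain of conjugations built up at each finite stage admits a coherent limit. This is exactly the kind of estimate under which the normal-summability hypothesis on $\{f_n\}$ bites: the residues pile up at most as fast as the $f_n$ themselves, and therefore the cumulative conjugating data stays controlled.

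For the $\MhSum$ assertion and the arbitrary-order claim, the proof is parallel: The Additivity Theorem is insensitive to the order in which residual classes are composed in the quotient, so the finite-$N$ identity
\[
\oint_\gamma \sum_{n=0}^N f_n(s)\phi(s,z)\,ds\bullet z = \MhSum_{n=0}^N \oint_\gamma f_n(s)\phi(s,z)\,ds\bullet z
\]
holds in $\mathcal{P}/\simeq$, and the same limit passage with Theorem \ref{thmCVG} closes it. Commutativity up to conjugation of the residual classes then dispenses of any specific ordering of the summands, completing the argument.
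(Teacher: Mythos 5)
Your proposal is correct and follows essentially the same route as the paper: apply The Additivity Theorem \ref{thmADD} at each finite stage $N$, then pass to the limit by bounding the tail $\oint_\gamma \sum_{n>N} f_n\phi\,ds\bullet z$ via The Triangle Inequality Theorem \ref{thmTriInq} and using normality of the partial compositions to conclude the difference of the two sides is small, with the $\MhSum$ case handled by the same argument (or functional inversion). Your added remarks on keeping the conjugating arcs coherent in the quotient address a bookkeeping point the paper itself only glosses over, but they do not change the structure of the argument.
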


\begin{proof}
Take the sequence of sums $F_N(s,z) = \sum_{n=0}^N f_n(s)$ then by The Additivity Theorem \ref{thmADD} we have that,

\[
\oint_{\gamma}F_N(s)\phi(s,z)\,ds\bullet z = \OmSum_{n=0}^N \oint_\gamma f_n(s)\phi(s,z)\,ds\bullet z\\
\]

The limit as $N\to\infty$ converges. What's needed is that,

\[
\lim_{N\to\infty}\oint_{\gamma}F_N(s)\phi(s,z)\,ds\bullet z = \oint_\gamma \lim_{N\to\infty}F_N(s)\phi(s,z)\,ds\bullet z\\
\]

To get it we need only follow a rather plain procedure. If $\mathcal{K} \subset \mathcal{H}$ where $\mathcal{H}$ is a domain of holomorphy for $\int_\gamma f_n(s)\phi(s,z)\,ds\bullet z$ for all $n>L$. For all $\epsilon>0$ there exists some $L$ such when $M > N > L$,

\[
\Big{|}\Big{|}\oint_\gamma \sum_{n=N+1}^M f_n(s)\phi(s,z)\,ds\bullet z - z\Big{|}\Big{|}_{\mathcal{K}} < \epsilon\\
\]

By the comparison, for some larger compact set $\mathcal{K}\subset\mathcal{L}$,

\[
\Big{|}\Big{|}\oint_\gamma \sum_{n=N+1}^M f_n(s)\phi(s,z)\,ds\bullet z - z\Big{|}\Big{|}_{\mathcal{K}} \le \sum_{n=N+1}^M\int_\gamma |f_n(s)|\big{|}\big{|}\phi(s,z)\big{|}\big{|}_{\mathcal{L}}\,|ds| < \epsilon\\
\]

Because,

\begin{eqnarray*}
\Big{|}\Big{|}\oint_\gamma F_M\phi - \oint_\gamma F_N\phi\Big{|}\Big{|}_{\mathcal{K}} &=& \Big{|}\Big{|}\oint_\gamma F_N\phi \bullet \oint_\gamma \sum_{n=N+1}^M f_n(s)\phi(s,z)  - \oint_\gamma F_N\phi\Big{|}\Big{|}_{\mathcal{K}}\\
&\le& \Big{|}\Big{|}\oint_\gamma F_N\phi \bullet (z+\epsilon)\bullet z  - \oint_\gamma F_N\phi\Big{|}\Big{|}_{\mathcal{K}}\\
&<& \epsilon'\\
\end{eqnarray*}

Recall that $\oint_\gamma F_N$ is a normal family; therefore shifting $z$ to $z+\epsilon$ produces an error of $\epsilon'$. By normal convergence everything works out. A similar procedure works just as well for the inverted orientation $\MhSum$; or by applying functional inversion. C'est,

\[
\Big{(}\OmSum_n \oint_\gamma h_n\Big{)}^{-1} = \MhSum_n \oint_{\gamma^{-1}} h_n\\
\]

Which is what we mean by: functional inversion derives the result for the inverse case. The details are left to the reader as an exercise.
\end{proof}

The reader should note that this provides a proof for The Compositional Taylor Theorem; but just that it's written a bit more abstractly. This was the direction of this theorem. We can write it rigorously now.

\begin{theorem}[The Compositional Taylor's Theorem]\label{thmC5A}
Let $\mathcal{S}$ be a simply connected domain in $\mathbb{C}$. Suppose $\phi(s,z) : \mathcal{S} \times \mathbb{C} \to \mathbb{C}$ is a holomorphic function. Let $\mathcal{U} \subset \mathcal{S}$ be an open disk about a point $\zeta \in \mathcal{S}$. Let $\gamma$ be a Jordan curve in $\mathcal{S}$ containing $\mathcal{U}$. For all $ w \in \mathcal{U}$ and for all $z \in \mathbb{C}$,

\[
\OmSum_{k=0}^\infty \oint_\gamma \frac{\phi(s,z)( w - \zeta)^k}{(s-\zeta)^{k+1}}\,ds\bullet z = \oint_\gamma \frac{\phi(s,z)}{s- w}\,ds\bullet z 
\]

And,

\[
\MhSum_{k=0}^\infty \oint_\gamma \frac{\phi(s,z)( w - \zeta)^k}{(s-\zeta)^{k+1}}\,ds\bullet z = \oint_\gamma \frac{\phi(s,z)}{s- w}\,ds\bullet z 
\]
\end{theorem}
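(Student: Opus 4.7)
The plan is to reduce this theorem to a direct application of The Summation Theorem \ref{thmSUM}. The crucial observation is the familiar Cauchy-kernel expansion: for $w \in \mathcal{U}$ and $s \in \gamma$ we have $|w-\zeta| < |s-\zeta|$ (because $\mathcal{U}$ sits strictly inside the Jordan curve $\gamma$), so the geometric series
\[
\frac{1}{s-w} \;=\; \sum_{k=0}^{\infty} \frac{(w-\zeta)^k}{(s-\zeta)^{k+1}}
\]
converges. Setting $f_k(s) = (w-\zeta)^k/(s-\zeta)^{k+1}$, each $f_k$ is meromorphic on $\mathcal{S}$ with the single pole $\zeta$, and $\sum_{k} f_k(s) = 1/(s-w)$.

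First I would verify normal summability. Pick $r > 0$ so the closed disk $\{|s-\zeta| \le r\}$ contains $\mathcal{U}$ and lies strictly inside $\gamma$, and choose an annular neighborhood $\mathcal{N} = \{r < |s-\zeta| < R\} \cap \mathcal{S}$ of $\gamma$ with $R$ small enough that $w \notin \mathcal{N}$ and $\mathcal{N} \subset \mathcal{S}$. Every $f_k$ is holomorphic on $\mathcal{N}$. On any compact $\mathcal{B} \subset \mathcal{N}$, set $\delta = \min_{s \in \mathcal{B}}|s-\zeta|$, so that $|w-\zeta| < \delta$ and
\[
\|f_k\|_{\mathcal{B}} \;\le\; \frac{|w-\zeta|^k}{\delta^{\,k+1}},
\]
whose sum is a geometric series. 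Hence $\{f_k\}$ is normally summable on its common domain of holomorphy near $\gamma$, and $\sum_k f_k = 1/(s-w)$ uniformly on compact subsets of $\mathcal{N}$.

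With the hypothesis in hand, Theorem \ref{thmSUM} yields
\[
\oint_{\gamma} \Big(\sum_{k=0}^{\infty} f_k(s)\Big)\phi(s,z)\,ds\bullet z \;=\; \OmSum_{k=0}^{\infty} \oint_{\gamma} f_k(s)\phi(s,z)\,ds\bullet z,
\]
and the analogous identity with $\MhSum$ replacing $\OmSum$. Substituting $\sum_k f_k(s) = 1/(s-w)$ on the left produces the two asserted formulas exactly. The principal obstacle is the bookkeeping around domains of holomorphy: individually each $f_k$ is singular only at $\zeta$, while the limit function $1/(s-w)$ has an extra pole at $w$ which lies inside $\gamma$. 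The use of the congruent integral $\oint$ rather than $\int$ is essential, because it absorbs the conjugation-class ambiguities of the residual classes that would otherwise obstruct the interchange of $\OmSum$ with a contour integral; once normal summability is phrased on a neighborhood of $\gamma$ that avoids both $\zeta$ and $w$, Theorem \ref{thmSUM} supplies the rest.
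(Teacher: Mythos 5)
Your proposal is correct and is essentially the paper's own argument: the paper derives Theorem \ref{thmC5A} precisely as the instance of The Summation Theorem \ref{thmSUM} with $f_k(s) = (w-\zeta)^k/(s-\zeta)^{k+1}$ summing to the Cauchy kernel $1/(s-w)$. If anything you are more careful than the paper in restricting the normal-summability check to a compact neighborhood of $\gamma$ bounded away from $\zeta$ (where the geometric bound actually converges), which is the right reading of the hypothesis of Theorem \ref{thmSUM}.
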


If the reader wishes to see this in more concrete form; it's a tad difficult to visualize. But one can think of $\gamma_\delta, \varphi_\delta:[0,2\pi] \to \mathcal{S}$ such that $\gamma_\delta : x \mapsto \zeta + \delta e^{ix}$ and $\varphi_\delta : x \mapsto w + \delta e^{ix}$. Then what we are effectively saying is that,

\[
\lim_{\delta \to 0} \Big{(}\OmSum_{k=0}^\infty \int_{\gamma_\delta} \frac{\phi(s,z)( w - \zeta)^k}{(s-\zeta)^{k+1}}\,ds\bullet z - \int_{\varphi_\delta} \frac{\phi(s,z)}{s- w}\,ds\bullet z\Big{)} = 0 \\
\]

Of which these individual contour integrals are similar to the general class $\oint$. Though really only in the limit the equality is had; it is obviously much less symbol heavy to use the congruent integral, and make the appropriate conjugations to return to the compositional integral scenario. Additionally, the convergence is guaranteed using the congruent integral; it is not in the pull back to the compositional integral. 

Though, we should say a more direct explanation of this formula. Note to the reader, it isn't pretty.  Let us denote ${\displaystyle f_k(s,z) = \frac{\phi(s,z)(w-\zeta)^k}{(s-\zeta)^{k+1}}}$; if only to subtract the amount of symbols in the slightest. There are a sequence of arcs $\tau^{n}_{jk}$ and $\mu^{n}_{l}$ (which are not closed) such that,

\begin{eqnarray*}
&&\lim_{n\to\infty}\OmSum_l \int_{\mu^n_l}\bullet \OmSum_{k=0}^\infty \Big{(}\OmSum_j \int_{\tau^{n}_{jk}} \bullet \int_\gamma f_k(s,z)\,ds\bullet \MhSum_j \int_{(\tau^{n}_{jk})^{-1}} \Big{)}\bullet \MhSum_l \int_{(\mu^{n}_l)^{-1}}\bullet z =\\
&=&\int_\gamma \frac{\phi(s,z)}{s-w}\,ds\bullet z\\
\end{eqnarray*}

Where each integral $\int_{\tau_{jk}^n}$ is defined by some sequence of meromorphic functions $t_{jk}^n(s) : \mathcal{S} \to \widehat{\mathbb{C}}$. Such that,

\[
\int_{\tau_{jk}^n} = \int_{\tau_{jk}^n} t_{jk}^n(s)\phi(s,z)\,ds\bullet z\\
\]

And similarly for $\mu_{l}^n$. Obviously it makes sense to shorten this whole mess of equations with the symbol $\oint$. Next to this of course, is just how much more aesthetically pleasing it looks in the congruent integral scenario. 
Nicely enough; in the congruent integral scenario: $\oint$, $\OmSum$, and $ds\bullet z$ behave isomorphically to $\int$, $\sum$ and $ds$. This is the take-away to be made from this chapter.

\section*{The semi-group property of residuals}\label{secAT5}
\setcounter{equation}{0}

We will take a quick break here, before moving on into murkier waters. The author felt it important to point out a consequence of The Additivity Theorem \ref{thmADD}. Suppose that $\mathcal{S}\subset\mathbb{C}$ is a simply connected domain. Let $\phi(s,z): \mathcal{S} \times \mathbb{C} \to \mathbb{C}$ be a holomorphic function. Let $f(s) : \mathcal{S} \to \widehat{\mathbb{C}}$ be a meromorphic function. Let $\gamma:[a,b] \to \mathcal{S}$ be a Jordan curve. 

We're going to express a semi-group property inherent to congruent integration. This serves to describe a very complex creature. It looks innocent enough; but behaves abnormally. Let $F( w,z)$ be defined as,

\[
F( w,z) = \oint_\gamma  w f(s)\phi(s,z)\,ds\bullet z
\]

Then, recalling we're in the reduced group $\mathcal{A} = \mathcal{P} / \simeq$,

\[
F( w,F(\alpha,z)) = F( w + \alpha,z)\\
\]

It helps now, to be a tad more explicit of the reduced group $\mathcal{A}$. We factually need to remain in the group $\mathcal{A}$ of coherent integrals, but we can take pull-backs to the composition integral. The above ``semi-group" composition can be better written,

\[
\widetilde{F}(w,z) = \int_\gamma  w f(s)\phi(s,z)\,ds\bullet z
\]

So that,

\[
g \bullet \widetilde{F}(w) \bullet g^{-1} \bullet h \bullet \widetilde{F}(\alpha) \bullet h^{-1} = r \bullet \widetilde{F}(w + \alpha) \bullet r^{-1}\\
\]

Where $g,h,r : \mathcal{D} \to \mathbb{C}$ are holomorphic functions such that $\overline{\mathcal{D}} = \mathbb{C}$. So when we write things like the above semi-group formula. This is what is intended. But, if we take a limit in $\mathcal{A}$ we must walk a bit carefully when taking the pullback to the compositional integral.

Limits in $\mathcal{A}$ behave a tad differently than limits when focusing on the compositional integral. As this chapter has explored. For instance, if we let $\gamma_\delta : [0,2\pi] \to \mathcal{S}$ and $x \mapsto \zeta+ \delta e^{ix}$, then in $\mathcal{A}$,

\[
\int_{\gamma_\delta} \in \oint_{\gamma_\delta}
\]

But,

\[
\lim_{\delta \to 0}\int_{\gamma_\delta}\,\,\text{does not necessarily converge}\\
\]

While,

\[
\lim_{\delta \to 0}\oint_{\gamma_\delta} = \Rsd\\
\]

Because each,

\[
\oint_{\gamma_\delta} = \Rsd\\
\]

Which is where the magic really happens. The fact we can limit the compositional side while we keep the congruent side constant. What we \emph{can} do in the pull back is a bit different. Suppose that $\varphi_\delta$ is some other Jordan curve about $\zeta$ which tends to zero as $\delta$ shrinks to zero, then,

\[
\lim_{\delta \to 0}\int_{\gamma_\delta} - \lim_{\delta \to 0}\int_{\varphi_\delta} \,\,\text{does converge}\\
\]

Which follows because they are conjugate similar, and the conjugate shrinks; and since the conjugate tends to zero, so does the difference. So additionally, when we consider the congruent integral; we should keep in mind when doing the pullback to the compositional integral; we require that we derive equality between two things as there being a limit which satisfies:

\[
\lim_{n\to\infty} f_n - h_n = 0\\
\]

So when we write equality in the reduced group $\mathcal{A}$; if the result is discovered through some limit in $\mathcal{A}$ then we need to take the limit in the above manner. Where $f_n, h_n\to\infty$ but $f_n - h_n \to 0$ and so we define $\lim_{n\to\infty} f_n = \lim_{n\to\infty} h_n$.

So although,

\[
F(w) \bullet F(\alpha)\bullet z = F(w+\alpha)
\]

is an easy consequence of The Additivity Theorem \ref{thmADD}; it serves to complicate matters greatly in complex dynamics and the pull back. It shows the anomaly of the residual and the congruent integral. It derives a homomorphism from addition to composition. The function $F$ is no less than a semi-group; but it's a bit of a jargon salad. It includes integration and conjugation in a very estranged manner.

\chapter{\emph{semi}Semi-groups, and discussions of Fourier}\label{chap4}

\section{Introduction}\label{sec1}
\setcounter{equation}{0}

The purpose of this chapter is to provide a more thorough analysis of the semi-group structure of the congruent integral. And, furthermore to talk about taking strange integrals across these semi-groups. To set our goals in stone, is of no real direction. To set a theme, is a tad more understandable. That being, semi-groups, semi-groups, and more semi-groups; but really, they aren't semi-groups they're almost semi-groups, or \emph{semi}semi-groups. With an intention of creating what looks like an integral transform--specifically those of a Fourier variety, this chapter is written.

To the readers initial sensation of the subject we write the central object of study--of which we intend to advance. Let $f(s) : \mathcal{S}\to \widehat{\mathbb{C}}$ be a meromorphic function. Let $\phi(s,z) : \mathcal{S} \times \mathbb{C} \to \mathbb{C}$ be a holomorphic function. Of it, allow $\gamma : [a,b] \to \mathcal{S}$ to be a Jordan curve in the complex plane and let $ w \in \mathbb{C}$. Then,

\[
F( w,z) = \oint_\gamma  w f(s)\phi(s,z)\,ds\bullet z\\
\]

$F$ is a very special function. It's kind of like a semi-group, as previously observed. Meaning, more clearly that,

\[
F( w,F(\alpha,z)) = F( w + \alpha,z)\\
\]

Which for our purposes we'll write using the bullet notation to further express commutativity and its relation to the exponential.

\[
F( w) \bullet F(\alpha) \bullet z = F( w+\alpha)\\
\]

In no other place in this paper does the bullet notation appear more natural. It has completely replaced $\cdot$ (multiplication) in the exponential formula. This comparison works in an unorthodox way, though. We're doing a bunch of hidden conjugations in this mess. Each term is conjugated by an appropriate function.

Now as a sweeping goal, we want to excavate the relationship between $F$ and $f$'s residues. In fact $F$ is just a special kind of residual function. And it has a good strong footing in traditional analysis which will be unearthed as we progress.

But, it has a more difficult and rough idea at its core. The Additivity Theorem \ref{thmADD} and The Summation Theorem \ref{thmSUM} expressed a discrete interchange between composition and addition; but with integrals, the problem looms as a more difficult thing. How do we handle conjugations when integrating? We took a sum and turned it into composition, minus some conjugations. How can we take an integral and turn it into a compositional integral; all while applied to a closed contour, all while keeping track of our conjugations?

It's not really as easy as the notation will make it appear. It follies a fair amount of technical build up. It may appear as though the author will begin to add and add and add; without harnessing anything significant towards this goal. But it is done in this manner so that we may top everything off with some Fourier analysis; and it appears in an illuminating manner; as well as being an easy consequence by that point.

What'll be done in this chapter, most especially, is compositional integration across multiple variables; and resorting to the congruent integral to straighten it out. We will consider multiple integrals, rather than a singular integral across one variable. In doing this we broach a fair amount of uncharted waters and strange expressions.

This last chapter of this paper poses possibilities and the immense difficulty which awaits. We are engorging ourselves on the homomorphic nature of the congruent integral. We are looking to take the identity $\oint(f+g)\phi = \oint f\phi \bullet \oint g\phi$ to its extreme. And in doing so, describe how an interchange of integrals is possible.

\section{The Poor-Man's Residue Theorem}\label{sec2}
\setcounter{equation}{0}

In this section the following schema is used. Let $\mathcal{S}$ be a simply connected domain in $\mathbb{C}$. We will let $\phi(s,z) : \mathcal{S} \times \mathbb{C} \to \mathbb{C}$ be a holomorphic function. We let $\zeta \in \mathcal{S}$ and as well, we let $\gamma$ be a Jordan curve encircling $\zeta$ positively. The arcs $\tau_k\subset \mathcal{S}$ are a sequence of arcs that do not enclose $\zeta$; and $t_k : \mathcal{S} \to \widehat{\mathbb{C}}$ are a sequence of meromorphic functions. Further, $ w \in \mathbb{C}$. Then, the objects of interest are two things, for $j\in\mathbb{N}$,

\[
G( w,z) = \int_\gamma \frac{ w\phi(s,z)}{(s-\zeta)^{j+1}}\,ds\bullet z\\
\]

And,

\[
g_k( w,z) = \int_{\tau_k}  wt_k(s)\phi(s,z)\,ds\bullet z\\
\]

These expressions are holomorphic for $w \in \mathbb{C}$ and $z \in \mathcal{D} = \mathcal{D}(w)$ where $\overline{\mathcal{D}} = \mathbb{C}$. Now in the additive case, the first expression can be handled by Cauchy's Integral Formula, and Cauchy's Derivative Formula. We aren't so lucky in the compositional case. We are only left with a poor-man's version. For a sanity check, assume $\phi(s,z) = \phi(s)$ is constant in $z$, then,

\[
G( w,z) = z + \frac{2 \pi i  w}{j!} \phi^{(j)}(\zeta)\\
\]

And certainly this is an additive semi-group. No real surprise there; it's nothing more than a one-off. Letting $\phi(s,z) = p(s)g(z)$ be separable, a similar form arises.

\[
G(w,z)=\int_0^{2\pi iw}\frac{\phi^{(j)}(\zeta,z)}{j!}\,dx\bullet z\\
\]

Returning to the case where $\phi(s,z)$ is not separable; we don't get a closed form expression as nice. Instead, we get something a bit more symbol heavy. Remembering, in the general case, we have to account for a whole bunch of conjugations.

Now since we are trying to induce a semi-group through the congruent integral, we have to act a bit different. But what we want is,

\[
\frac{d G}{d  w}\Big{|}_{w=0} = \frac{2\pi i }{j!}\frac{\partial^j}{\partial \zeta^j}\phi(\zeta,z)= \int_\gamma \frac{\phi(s,z)}{(s-\zeta)^{j+1}}\,ds\\
\]

And,

\[
\frac{d g_k}{d  w}\Big{|}_{w=0} = \int_{\tau_k} t_k(s)\phi(s,z)\,ds\\
\]

To write it in the manner we wish to exploit it as is a bit more cryptic. It may not ring as evident in this form; but that's what this chapter is for; to remove the encryption. Nonetheless,

\[
G( w,z) = z +  w \int_\gamma \frac{\phi(s,z)}{(s-\zeta)^{j+1}}\,ds + \mathcal{O}( w^2)\\
\] 

\[
g_k(w,z) = z + w \int_{\tau_k} t_k(s)\phi(s,z)\,ds + \mathcal{O}(w^2)\\
\]

Note that the first order term is not a compositional integral; it's just your ordinary run of the mill additive kind. This is to show, that in a neighborhood of $0$, $G_j( w,z) \sim z +  w\int_\gamma \frac{\phi(s,z)}{(s-\zeta)^{j+1}}\,ds$ looks just like a run of the mill integral.

This will become our equation of a semi-group. But, for the congruent integral. If we write,

\[
\widetilde{G}(w,z) = \oint_\gamma \frac{w\phi(s,z)}{(s-\zeta)^{j+1}}\,ds\bullet z\\
\]

Where,

\[
\widetilde{G} = \OmSum_k g_k \bullet G 
\bullet \MhSum_k g_k^{-1} \bullet z\\
\]

Now, the above use of first order approximation allows us to say that,

\[
\frac{d \widetilde{G}}{dw}\Big{|}_{w=0} = \frac{dG}{dw}\Big{|}_{w=0}\\
\]

Because,

\begin{eqnarray*}
\widetilde{G} &=& \OmSum_k g_k \bullet G \bullet \MhSum_k g_k^{-1}\bullet z\\
&=& \OmSum_k \big{(}z + w\int_{\tau_k}t_k(s)\phi(s,z) \,ds\big{)} \bullet G \bullet \MhSum_k \big{(}z - w\int_{\tau_k}t_k(s)\phi(s,z) \,ds\big{)}\bullet z + \mathcal{O}(w^2)\\
&=& z+ \sum_k w\int_{\tau_k}t_k(s)\phi(s,z) \,ds + G - \sum_k w\int_{\tau_k}t_k(s)\phi(s,z) \,ds + \mathcal{O}(w^2)\\
&=& z + G + \mathcal{O}(w^2)\\
\end{eqnarray*}

This is all derived through the first order approximation. But it allows us to again, denote the congruent integral in a parallel to the compositional integral. This is to say,

\[
\frac{d}{dw}\Big{|}_{w=0}\oint_\gamma \frac{w\phi(s,z)}{(s-\zeta)^{j+1}}\,ds\bullet z = \int_\gamma \frac{\phi(s,z)}{(s-\zeta)^{j+1}}\,ds = \frac{d}{dw}\Big{|}_{w=0}\int_\gamma \frac{w\phi(s,z)}{(s-\zeta)^{j+1}}\,ds\bullet z\\
\]

So, all in order there. Of this we now must prove the base assumption of these derivations. Of which we write the perhaps most important lemma of this chapter.

\begin{lemma}[The Infinitesimal Lemma]\label{lmaInf}
Suppose $\mathcal{S}$ is a simply connected domain. Suppose $\phi(s,z):\mathcal{S}\times \mathbb{C} \to \mathbb{C}$ is a holomorphic function. Suppose $f(s):\mathcal{S} \to \widehat{\mathbb{C}}$ is a meromorphic function. Suppose $w \in \mathbb{C}$. Then for an arbitrary arc $\sigma \subset \mathcal{S}$,

\[
\frac{d}{dw}\Big{|}_{w=0}\int_\sigma wf(s)\phi(s,z)\,ds\bullet z = \int_\sigma f(s)\phi(s,z)\,ds\\
\]

\end{lemma}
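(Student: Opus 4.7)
The plan is to expand the compositional integral as a power series in $w$ about $w=0$, identify its linear coefficient, and justify the interchange of $\frac{d}{dw}|_{w=0}$ with the partial-composition limit. Since $\sigma$ must avoid the poles of $f$, the product $f(s)\phi(s,z)$ is jointly holomorphic in a neighborhood of $\sigma \times \mathcal{K}$ for any compact $\mathcal{K} \subset \mathbb{C}$, so the hypotheses of Theorem \ref{thmDNSCVG} are available when we regard $wf(s)\phi(s,z)$ with the parameter $w$ lying in a small disk about $0$.

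First I would write the partial compositions
\[
Y_n(w,z) \;=\; \OmSum_{j=0}^{n-1}\bigl( z + w\,f(\sigma(x_j^*))\phi(\sigma(x_j^*),z)\,\Delta\sigma_j \bigr)\bullet z.
\]
At $w=0$ each composed factor is the identity, so $Y_n(0,z)=z$. Differentiating each factor and applying the chain rule (every inner partial evaluates to $1$ at $w=0$) collapses the telescoping product to
\[
\frac{\partial Y_n}{\partial w}(0,z) \;=\; \sum_{j=0}^{n-1} f(\sigma(x_j^*))\,\phi(\sigma(x_j^*),z)\,\Delta\sigma_j,
\]
which is a Riemann sum for the ordinary line integral $\int_\sigma f(s)\phi(s,z)\,ds$. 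Letting $\Delta x_j\to 0$ gives the target limit on the right-hand side.

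Second, I would establish that $Y_n(w,z)$ converges locally uniformly in $(w,z)$ on some polydisk $\{|w|<r\}\times\{|z-z_0|<\delta\}$. This follows from The Triangle Inequality Theorem \ref{thmTriInq} applied to the integrand $wf(s)\phi(s,z)$: shrinking $r$ forces the supremum of $|wf(s)\phi(s,z)|$ along $\sigma$ to be arbitrarily small, which is exactly the hypothesis that drives the convergence argument of Section \ref{secC2}. Lemma \ref{lma2A} then guarantees that the family $\{Y_n\}$ is normal on this polydisk, and Theorem \ref{thmDNSCVG} yields a holomorphic limit $Y(w,z)$, equal by definition to $\int_\sigma wf(s)\phi(s,z)\,ds\bullet z$.

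Third, by Weierstrass' theorem on holomorphic limits under local uniform convergence, the partial derivatives $\partial_w Y_n$ converge locally uniformly to $\partial_w Y$. Combining with the explicit computation from the first step,
\[
\frac{d}{dw}\Big|_{w=0}\int_\sigma wf(s)\phi(s,z)\,ds\bullet z \;=\; \lim_{n\to\infty}\frac{\partial Y_n}{\partial w}(0,z) \;=\; \int_\sigma f(s)\phi(s,z)\,ds,
\]
which is the claim. The main obstacle is precisely the interchange in this last display: naively, differentiating an infinite composition term by term ignores the chain of inner derivatives, and it is only because every factor is infinitesimally close to the identity at $w=0$ that the higher-order cross terms are uniformly $O(w^2)$. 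The cleanest way to control them is via the uniform convergence furnished by normality, rather than by trying to estimate products of derivatives directly.
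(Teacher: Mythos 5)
Your proof is correct and follows essentially the same route as the paper: the paper's proof expands the partial composition exactly as $z + w\sum_{m} f(s_m)\phi(s_m,z_m(w))\Delta s_m$ and observes that the inner sample points satisfy $z_m(w)\to z$ as $w\to 0$, which is the same extraction of the linear term in $w$ that you carry out by differentiating the partial compositions at $w=0$. Your appeal to local uniform convergence in $(w,z)$ and Weierstrass' theorem to pass $\partial_w$ through the limit $n\to\infty$ makes explicit an interchange that the paper's two-line proof leaves implicit.
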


\begin{proof}
Take the partial summations of this integral and,

\[
\int_\sigma wf(s)\phi(s,z)\,ds\bullet z = z + w\sum_{m} f(s_m) \phi(s_m,z_m(w)) \Delta s_m\\
\]

Now $z_m(w) \to z$ as $w\to0$. Let $\Delta s_m \to 0$ to get the integral, which concludes the proof.
\end{proof}

From this lemma we derive a theorem:

\begin{theorem}[The Poor-man's Residue Theorem]\label{thmPRST}
Suppose $\mathcal{S}$ is a simply connected domain. Suppose $\phi(s,z):\mathcal{S}\times \mathbb{C} \to \mathbb{C}$ is a holomorphic function. Suppose $f(s):\mathcal{S} \to \widehat{\mathbb{C}}$ is a meromorphic function. Let $\gamma$ be a Jordan curve. Then,

\[
\frac{d}{dw}\Big{|}_{w=0} \oint_{\gamma} wf(s)\phi(s,z)\,ds\bullet z = \int_\gamma f(s)\phi(s,z)\,ds\\
\]
\end{theorem}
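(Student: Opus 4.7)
The plan is to realize the congruent integral by a specific smooth family of representatives in the compositional setting, then apply the Infinitesimal Lemma \ref{lmaInf} together with a first-order composition algebra exactly of the flavor sketched in the discussion immediately preceding that lemma. For each $w$ in a neighborhood of $0$ I will pick the representative
$$\widetilde{G}(w,z) = \OmSum_k \int_{\tau_k} wt_k(s)\phi(s,z)\,ds \bullet \int_\gamma wf(s)\phi(s,z)\,ds \bullet \MhSum_k \int_{\tau_k^{-1}} wt_k(s)\phi(s,z)\,ds \bullet z$$
of the class $\oint_\gamma wf(s)\phi(s,z)\,ds\bullet z$, where $\{\tau_k\}$ and $\{t_k\}$ furnish the conjugations witnessing the equivalence $\simeq$. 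This representative depends smoothly on $w$ and collapses to $z$ at $w=0$, so it is the natural object to differentiate.

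Applying the Infinitesimal Lemma \ref{lmaInf} to the main integrand and to each conjugating factor gives the first-order expansions
$$\int_\gamma wf(s)\phi(s,z)\,ds\bullet z = z + w\int_\gamma f(s)\phi(s,z)\,ds + \mathcal{O}(w^2),$$
and for each $k$, $\int_{\tau_k}wt_k(s)\phi(s,z)\,ds\bullet z = z + w\int_{\tau_k}t_k(s)\phi(s,z)\,ds + \mathcal{O}(w^2)$; the inverse factor carries the opposite sign in its first-order term modulo $\mathcal{O}(w^2)$. Substituting these into $\widetilde{G}$ and expanding the composition termwise, as in the paper's computation leading up to Lemma \ref{lmaInf}, the $w$-linear contributions of the conjugating $g_k$ and $g_k^{-1}$ cancel in pairs, so that $\widetilde{G}(w,z) = z + w\int_\gamma f(s)\phi(s,z)\,ds + \mathcal{O}(w^2)$. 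Differentiating at $w=0$ yields the claimed identity.

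The hard part I anticipate is the rigorous handling of the (potentially infinite) family of conjugations: the $\mathcal{O}(w^2)$ error terms must not accumulate under the outer $\OmSum_k$ and $\MhSum_k$. I would control this by the Compactly Normal Convergence Theorem \ref{ThmNormCvg}: on any compact disk $\mathcal{K}\subset\mathbb{C}$ the family $\{\int_{\tau_k}wt_k\phi\,ds\bullet z - z\}_{k}$ is compactly normally summable, and combined with the uniform holomorphy of $\phi$ on a slightly enlarged compact set $\mathcal{L}$ (as in The Triangle Inequality Theorem \ref{thmTriInq}) the remainders are bounded by a constant multiple of $w^2$ independent of $k$. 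This uniformity justifies differentiating under the composition limit and closes the argument.
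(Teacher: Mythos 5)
Your proposal is correct and follows essentially the same route as the paper: the paper also realizes the congruent integral as $\widetilde{G} = \OmSum_k g_k \bullet G \bullet \MhSum_k g_k^{-1}\bullet z$, expands each factor to first order in $w$ via the Infinitesimal Lemma so that the conjugating terms cancel in pairs, and concludes $\frac{d\widetilde{G}}{dw}\big|_{w=0} = \frac{dG}{dw}\big|_{w=0}$. Your closing remark on controlling the accumulated $\mathcal{O}(w^2)$ errors through compactly normal summability is a point the paper leaves implicit, and is a welcome tightening rather than a different argument.
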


So despite all the conjugations the result in the infinitesimal is as expected. The congruent integral is similar to the compositional integral.

\section{Integrating in The Separable Case}\label{sec4}
\setcounter{equation}{0}

In this section the following schema is used. Let $\mathcal{S}$ be a simply connected domain in $\mathbb{C}$. We will let $f(s) : \mathcal{S} \to \widehat{\mathbb{C}}$ be a meromorphic function. Let $p( w) : \mathbb{C} \to \mathbb{C}$ be a holomorphic function. We will also let $\phi(s,z) : \mathcal{S} \times \mathbb{C} \to \mathbb{C}$ be a holomorphic function. Assume that $\gamma:[a,b] \to \mathcal{S}$ is a Jordan curve oriented positively.

As before, we'll conform to the restriction that,

\[
F( w,z) = \oint_\gamma  w f(s)\phi(s,z)\,ds\bullet z\\
\]

However, the object we'll be interested in is,

\[
F(p( w),z) = \oint_\gamma p( w)f(s)\phi(s,z)\,ds\bullet z\\
\]

But before we hit this, we'll look more closely at $F( w)$ without $p$. When we take the derivative in $w$ we are doing something which may be off kilter. But nonetheless it can on occasion be helpful to go off script. We can look at this like the derivative of an exponential. That is to say,

\[
\lim_{\delta \to 0} \frac{F( w+\delta,z) - F( w,z)}{\delta} = \lim_{\delta \to 0} \frac{F(\delta,z) - z}{\delta} \bullet F \bullet z\\
\]

Which is to say, very little different than the exponential; and the reduction that ${\displaystyle \frac{e^{ w + \delta} - e^{ w}}{\delta} = \frac{e^{\delta} - 1}{\delta} \cdot e^{ w}}$. We can in essence though, reduce all of $F$'s relation to $f$ and $\phi$ to the value,

\[
\lim_{\delta \to 0} \frac{F(\delta,z) - z}{\delta} = 2\pi i \sum_j \Res_{s=\zeta_j} f(s)\phi(s,z)\\
\]

Here, to be clear, the limit $\delta \to 0$ is taken in the complex sense. From all paths approaching $0$. This is the first order approximation we are so keen on.

This notation can be made more clear by the identity,

\[
\frac{d}{d w}\Big{|}_{ w = 0} F( w,z) = 2\pi i \sum_j \Res_{s=\zeta_j} f(s)\phi(s,z)\\
\]

However, an even clearer form of this identity, and the true form we want:

\[
\frac{d}{d w}\Big{|}_{ w = 0} F( w,z) = \int_\gamma f(s)\phi(s,z)\,ds\\
\]

In his internal language the author likes to refer to this as the \textit{logarithm} of the semi-group. This comes from its direct relation to the exponential case. Which is to say that,

\[
\log(F;z) = \frac{d}{d w}\Big{|}_{ w = 0} F( w,z)
\]

If $E( w,z) = ze^{\lambda  w}$ is the exponential semi-group for some $\lambda \in \mathbb{C}$, then the logarithm of this reduces to our usual logarithm more or less,

\[
\log(E;z) = \lambda z
\]

Where we note a multiplicative factor. More egregiously, we still have a nice additive identity that the logarithm version is contained within. In that if,

\begin{eqnarray*}
F_1 &=& \oint_\gamma  w f_1(s)\phi(s,z)\,ds\bullet z\\
F_2 &=& \oint_\gamma  w f_2(s)\phi(s,z)\,ds\bullet z\\
F_1 \bullet F_2 = H &=& \oint_\gamma  w \big{(}f_1(s) + f_2(s)\big{)}\phi(s,z)\,ds\bullet z\\
\end{eqnarray*}

Then,

\[
\log(H;z) = \log(F_1;z) + \log(F_2;z)\\
\]

Or,

\[
\log(F_1 \bullet F_2;z) = \log(F_1;z) + \log(F_2;z)\\
\]

We should always recall that $\log = \log_\phi$. This logarithm function is defined with respect to $\phi(s,z)$. But forgoing this; we assume we have always fixed $\phi$.

The relation to the logarithm should be glaringly obvious. Much of the remainder of this paper hinges entirely on this discussion of logarithms; but the author is wary to include it. It does aid in visualizing what exactly is going on in the residue theorems of the previous sections; however, it is a precarious thing. It is difficult to put on solid ground.\\

Now returning to the discussion above, we enter in the more important function,

\[
F(p( w),z)\\
\]

And in that manner, we can familiarize ourselves a bit with its structure. The most important factor being,

\[
F(p_1( w) + p_2( w)) = F(p_1( w))\bullet F(p_2( w)) \bullet z\\
\]

Now to set the stage, we're going to integrate this object. But not in a traditional sense. In the compositional integral sense. Let $\tau$ be some continuously differentiable arc in $\mathbb{C}$. Observe the following manipulation of Riemann Sums,

\begin{eqnarray*}
F(\int_\tau p( w)\,d w,z) &=& F(\lim_{\Delta \tau_j \to 0} \sum_{j=0}^{n-1} p(\tau_j^*)\Delta \tau_j,z)\\
&=& \lim_{\Delta \tau_j \to 0} F( \sum_{j=0}^{n-1} p(\tau_j^*)\Delta \tau_j,z)\\
&=& \lim_{\Delta \tau_j \to 0} \OmSum_{j=0}^{n-1} F(p(\tau_j^*)\Delta \tau_j,z)\bullet z\\
&=& \lim_{\Delta \tau_j \to 0} \OmSum_{j=0}^{n-1} z + p(\tau_j^*) \log(F;z)\Delta \tau_j \bullet z\\
&=& \int_\tau p( w) \log(F;z)\,d w \bullet z\\
\end{eqnarray*}

Where the third to fourth line was derived by first order approximation. It helps now to slightly abandon this logarithm notation and recall its expansion. Which reduces to,

\[
\oint_\gamma \Big{(}\int_\tau p( w)\,d w\Big{)} f(s)\phi(s,z)\,ds\bullet z= \int_\tau 2 \pi i p( w)\sum_j \Res_{s=\zeta_j} f(s)\phi(s,z)\,d w \bullet z\\
\]

And from here, we can see a more clear picture. If we call $h( w,s) = p( w)f(s)$ then,

\[
\oint_\gamma \Big{(}\int_\tau h( w,s)\,d w\Big{)}\phi(s,z)\,ds\bullet z= \int_\tau 2 \pi i\sum_j \Res_{s=\zeta_j} h( w,s)\phi(s,z)\,d w \bullet z\\
\]

Even more miraculously, we can write this in a language not too different from Fubini. Which is to say, we can write this as the interchange of integrals. 

\[
\oint_\gamma \int_\tau h( w,s)\phi(s,z)\,d w\,ds\bullet z = \int_\tau \int_\gamma h( w,s)\phi(s,z)\,ds\,d w\bullet z\\
\]

Deriving this for the case when $h( w,s)$ is not separable will be the goal of the next section. From that; discussions of Fourier's analysis extended to the compositional scenario appear a tad more naturally.

\section{The Poor Man's Fubini's Theorem}\label{sec5}
\setcounter{equation}{0}

Throughout this section we let $\mathcal{S}$ be a simply connected domain in $\mathbb{C}$ and let $\mathcal{W}$ be a domain. We will assume that $\phi(s,z) : \mathcal{S} \times \mathbb{C} \to \mathbb{C}$ is a holomorphic function. We will assume that $h( w,s) : \mathcal{W} \times \mathcal{S} \to \widehat{\mathbb{C}}$ is a meromorphic function. Let $\gamma$ be a Jordan curve in $\mathcal{S}$; and let $\tau$ be a continuously differentiable arc in $\mathcal{W}$.

The object of this section is to derive something that looks like Fubini's theorem. This is to mean, in our current zoology; some kind of rule that gives us the same legwork Fubini gave the traditional functional analysis. It masks itself as an interchange of integrals; but it's a bit more subtle than that. It's just as necessarily a statement that $d w\,ds\bullet z = ds\,d w\bullet z$; if the conditions are just right. This means differentials commute when attached to $z$ with a $\bullet$-product.

But what we want is,

\[
\oint_\gamma \int_\tau h( w,s)\phi(s,z)\,d w\,ds\bullet z = \int_\tau \int_\gamma h( w,s)\phi(s,z)\,ds\,d w\bullet z\\
\]

To begin we can take a calculated approach and work with a Riemann-Stieljtes Composition again. In hopes of aquiring brevity; with a loss of symbols; the author hopes the reader forgives him if he just writes,

\begin{eqnarray*}
\oint_\gamma \int_\tau h( w,s)\phi(s,z)\,d w\,ds\bullet z &=& \oint_\gamma \lim_{\Delta \tau_j \to 0}\sum_j h(\tau_j^*,s)\phi(s,z) \Delta \tau_j \, ds\bullet z\\
&=& \lim_{\Delta \tau_j \to 0} \OmSum_j \oint_\gamma h(\tau_j^*,s)\phi(s,z)\,\Delta \tau_j\,ds\bullet z\\
\end{eqnarray*}

But,

\[
\oint_\gamma h(\tau_j^*,s)\phi(s,z)\,\Delta \tau_j\,ds\bullet z = z + \int_\gamma h(\tau_j^*,s)\phi(s,z)\,ds \Delta \tau_j + \mathcal{O}(\Delta \tau_j^2)\\
\]

Which follows from the first order expansions done in The Infinitesimal Lemma \ref{lmaInf} and The Poor Man's Residue Theorem \ref{thmPRST}. And with that we arrive at,

\[
\oint_\gamma \int_\tau h( w,s)\phi(s,z)\,d w\,ds\bullet z = \lim_{\Delta \tau_j \to 0} \OmSum_j z + \int_\gamma h(\tau_j^*,s)\phi(s,z)\,ds \Delta \tau_j\bullet z\\
\]

Which in the limit certainly converges to our desired result. Giving us a taped together Fubini's Theorem. Which is to say, a poor man's Fubini's Theorem.

\begin{theorem}[The Poor Man's Fubini's Theorem]\label{thmFUB}
Let $\mathcal{S}$ be a simply connected domain in $\mathbb{C}$ and let $\mathcal{W}$ be a domain. Let $\phi(s,z) : \mathcal{S} \times \mathbb{C} \to \mathbb{C}$ be a holomorphic function and let $h( w,s) : \mathcal{W} \times \mathcal{S} \to \widehat{\mathbb{C}}$ be a meromorphic function. Let $\gamma$ be a Jordan curve in $\mathcal{S}$; and let $\tau$ be a continuously differentiable arc in $\mathcal{W}$. Then,

\[
\oint_\gamma \int_\tau h( w,s)\phi(s,z)\,d w\,ds\bullet z = \int_\tau \int_\gamma h( w,s)\phi(s,z)\,ds\,d w\bullet z\\
\]
\end{theorem}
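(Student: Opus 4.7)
The plan is to mimic the calculation the discussion preceding the theorem already gestures at, but be careful enough about limits for the argument to be sound. First, I would fix a partition $\{w_j\}$ of the arc $\tau$ in descending order with $w_{j+1} \le w_j^* \le w_j$ and $\Delta\tau_j = w_j - w_{j+1}$, and write the inner ordinary integral as a Riemann--Stieltjes sum
\[
\int_\tau h(w,s)\phi(s,z)\,dw = \lim_{\Delta\tau_j \to 0}\sum_j h(w_j^*,s)\phi(s,z)\Delta\tau_j,
\]
uniformly in $s \in \gamma$. Pulling this limit through the outer congruent integral (legitimate because on $\gamma$ the integrand is uniformly approximated, and the congruent integral is continuous with respect to normal convergence of the integrand, just as in The Summation Theorem~\ref{thmSUM}), the left-hand side becomes
\[
\lim_{\Delta\tau_j \to 0}\OmSum_j \oint_\gamma h(w_j^*,s)\phi(s,z)\,\Delta\tau_j\,ds\bullet z.
\]

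Next, I would apply the first-order expansion which is the heart of The Poor Man's Residue Theorem~\ref{thmPRST} and The Infinitesimal Lemma~\ref{lmaInf}: for the parameter $\Delta\tau_j$ scaling the integrand,
\[
\oint_\gamma h(w_j^*,s)\phi(s,z)\,\Delta\tau_j\,ds\bullet z \;=\; z + \Delta\tau_j \int_\gamma h(w_j^*,s)\phi(s,z)\,ds + \mathcal{O}(\Delta\tau_j^2),
\]
with the $\mathcal{O}$-term compactly uniform in $z$ and in $w_j^* \in \tau$ (since $\tau$ is compact and $h$, $\phi$ are continuous there). Substituting this into the nested composition and using the compactly normal convergence regime from Theorem~\ref{ThmNormCvg}, the error terms $\mathcal{O}(\Delta\tau_j^2)$ accumulate to $\mathcal{O}(\Delta\tau_j)$ across the full partition and drop out in the limit. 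What survives is
\[
\lim_{\Delta\tau_j \to 0}\OmSum_j \Big(z + \Big(\int_\gamma h(w_j^*,s)\phi(s,z)\,ds\Big)\Delta\tau_j\Big)\bullet z,
\]
which is, by the very definition of the compositional integral in the $\bullet z$ sense, exactly $\int_\tau \int_\gamma h(w,s)\phi(s,z)\,ds\,dw\bullet z$.

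The main obstacle I expect is controlling the accumulation of the $\mathcal{O}(\Delta\tau_j^2)$ error uniformly in $z$ and through the nested composition: we need the partial nestings to remain in a fixed compact set so that the first-order approximations can be summed without the error blowing up, and we need to justify that the congruent integral (which is genuinely a class modulo conjugation, not a single function) interacts well with the limiting procedure. Both points are already present in the earlier machinery --- the first via The Triangle Inequality Theorem~\ref{thmTriInq} applied to the finite products of small near-identity compositions, and the second via the remark that limits in $\mathcal{A} = \mathcal{P}/\!\simeq$ may be computed by choosing compatible representatives so that differences tend to zero. Once those two points are invoked, the computation above goes through, and the symmetry between $\gamma$ and $\tau$ is only notational: one may just as well have started by expanding $\oint_\gamma$ first and recovered the left-hand side, confirming that the two iterated integrals agree.
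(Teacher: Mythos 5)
Your proposal follows essentially the same route as the paper's proof: write the inner integral as a Riemann sum, pass the limit through the congruent integral, apply the first-order expansion from The Infinitesimal Lemma \ref{lmaInf} and The Poor Man's Residue Theorem \ref{thmPRST}, and let the $\mathcal{O}(\Delta\tau_j^2)$ errors vanish in the limit. You are somewhat more explicit than the paper about why the accumulated errors remain controlled and how the quotient by conjugation interacts with limits, but the core argument is identical.
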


\section{A Primer of Fourier}\label{sec6}
\setcounter{equation}{0}

Throughout this section we let $\mathcal{S}$ be a simply connected domain in $\mathbb{C}$; and let $\mathcal{W}_a = \{ w \in \mathbb{C} :\,|\Im( w)| < a\}$ for some $a>0$. We will assume that $\phi(s,z) : \mathcal{S} \times \mathbb{C} \to \mathbb{C}$ is a holomorphic function. We will assume that $h( w,s) : \mathcal{W}_a \times \mathcal{S} \to \widehat{\mathbb{C}}$ is a meromorphic function. Let $\gamma$ be a Jordan curve in $\mathcal{S}$. For convenience $h(w,s)$ is holomorphic in $w$ for all $s \in \gamma$. One can think of this as the singularities in $h$ being isolated in $s$; though not necessarily--but for convenience assume so.

The object of study in this section is a bivariable function $f$ and its strange Fourier pair $\widehat{f}$. To begin, we write and define,

\[
f( w,z) = \oint_\gamma h( w,s)\phi(s,z)\,ds \bullet z\\
\]

Written this way, we call this a \emph{Derived Residual}. Derived Residuals will make up the rest of this paper. It's not quite a semi-group; but it shares enough of its properties to be useful. Of it, we make use.

With this description, we want to create a Fourier Transform. It's not immediately obvious how, but we would like the corresponding transform to be,

\[
\widehat{f}(\xi,z) = \oint_\gamma \widehat{h}(\xi,s)\phi(s,z)\,ds \bullet z\\
\]

Where $\widehat{h}$ is the usual Fourier Transform of $h$ across $w$. Which is to say,

\[
\widehat{h}(\xi,s) = \int_{-\infty}^{\infty} h(w,s)e^{- 2 \pi i w\xi}\,dw\\
\]

Which now we can see that it is incredibly useful for the singularities of $h$ to be isolated in $s$. For a sanity check; we can typically assume this is the case. We have quite a few road blocks ahead of us to define and control this function. The most stringent being; what operation do we actually perform on $f$ to get $\widehat{f}$?

Well if we go with the convention that,

\[
\log(f;w,z)= \int_\gamma h( w,s)\phi(s,z)\,ds
\]

Which follows closely to our work done in these Poor Man's Theorems. We can almost see what we need. Except, we don't need this precisely. We need something close to it. And in that vein, we attempt to clarify the logarithm function. What we want to use is the following,

\[
\frac{d}{d\epsilon}\Big{|}_{\epsilon = 0} \int_\gamma \epsilon h( w,s)\phi(s,z)\,ds \bullet z = \int_\gamma h( w,s)\phi(s,z)\,ds\\ 
\]

For which we can prescribe a kind of functional derivative,

\[
\log(f;  w,z): \int_\gamma h( w,s)\phi(s,z)\,ds \bullet z \mapsto \int_\gamma h( w,s)\phi(s,z)\,ds\\
\]

Wherein, this logarithm function essentially just deletes the $\bullet z$. Obviously this requires care, but foregoing this transgression momentarily, we continue. By The Poor Man's Fubini's Theorem \ref{thmFUB}:

\begin{eqnarray*}
\widehat{f}(\xi,z) &=& \int_{-\infty}^\infty e^{-2 \pi i \xi  w} \log(f; w,z)\,d w\bullet z\\
&=& \int_{-\infty}^\infty e^{-2 \pi i \xi  w} \int_\gamma h( w,s)\phi(s,z)\,ds\,d w\bullet z\\
&=& \oint_\gamma \int_{-\infty}^\infty h( w,s)e^{-2 \pi i \xi  w}d w \, \phi(s,z)\,ds\bullet z\\
&=& \oint_\gamma \widehat{h}(\xi,s)\phi(s,z)\,ds \bullet z\\
\end{eqnarray*}

This we will take as the definition of our integral transform. In a very similar form the inverse transform looks like,

\[
f( w,z) = \int_{-\infty}^\infty e^{2 \pi i \xi  w} \log(\widehat{f};\xi,z)\,d\xi\bullet z\\
\]

Sadly this is only a primer. In order for all these progressions to be factual; we need this logarithm function to be well defined. This is the hardest part of defining The Compositional Fourier Transform. Actually it's pretty straight-forward, but it may take a small perturbation of just how malleable these expressions behave; and what they mean clearly.

Assume we have some function $f$, such that $h_1 \neq h_2$,

\[
f( w,z) = \int_\gamma h_1( w,s)\phi(s,z)\,ds \bullet z = \int_\gamma h_2( w,s)\phi(s,z)\,ds \bullet z\\
\]

And,

\[
\int_\gamma h_1( w,s)\phi(s,z)\,ds  \neq\int_\gamma h_2( w,s)\phi(s,z)\,ds\\
\]

We need to manage the cases in which this happens. As to this, we move towards the next section; and attempt to put the logarithm on strong footing.

\section{Reifying The Logarithm Function}\label{sec7}
\setcounter{equation}{0}

Throughout this section we let $\mathcal{S}$ be a simply connected domain in $\mathbb{C}$; and let $\mathcal{W}_a = \{ w \in \mathbb{C} :\,|\Im( w)| < a\}$ for some $a>0$. We will assume that $\phi(s,z) : \mathcal{S} \times \mathbb{C} \to \mathbb{C}$ is a holomorphic function. We will assume that $h( w,s) : \mathcal{W}_a \times \mathcal{S} \to \widehat{\mathbb{C}}$ is a meromorphic function. Let $\gamma$ be a Jordan curve in $\mathcal{S}$. We let,

\[
f[h]( w,z) = \int_\gamma h( w,s)\phi(s,z)\,ds\bullet z\\
\]

To make sense of the logarithm; we have to first concede that as we've talked about it so far is ill-defined. There can be no logarithm function defined on $f$; instead it needs to be defined on $f[h]$. So to that end, we need to change up our language a fair amount.

Now as noted, the quick-way to define the logarithm function is,

\[
\lim_{\epsilon \to 0} \frac{f[\epsilon h] - z}{\epsilon} = \log(f[h]; w,z)\\
\]

Which is what we will qualify as our logarithm. The reason this is not an operation on $f$ is because if it were, we'd have multiply defined values. For the sake of the argument, let $\phi(s,z) = z$ and take,

\[
f[\frac{p( w)}{s-\zeta}]( w,z) = \int_{|s| = 1}\frac{p( w)}{s-\zeta} z\,ds\bullet z = z e^{2 \pi i p( w)}\\
\]

Then, the $\log$ of this thing is just,

\[
\log(f[\frac{p( w)}{s-\zeta}]; w,z) = 2 \pi i p( w)z\\
\]

Now let's observe another trick,

\begin{eqnarray*}
f[\frac{p( w) + 1}{s-\zeta}]( w,z) &=& \int_{|s| = 1}\frac{p( w) + 1}{s-\zeta} z\,ds\bullet z\\
&=& z e^{2 \pi i p( w) + 2\pi i}\\
&=& z e^{2 \pi i p( w)}\\
&=& f[\frac{p( w)}{s-\zeta}]; w,z)
\end{eqnarray*}

However, if we were to take the logarithm function we'd get,

\[
\log (f[\frac{p( w) + 1}{s-\zeta}] ;  w,z) = 2 \pi i (p( w)+1)z\\
\]

So as to entice the reader; the log function is defined on $f[h]$; it is not defined on $f$. As two $h$'s may produce the same $f$, but they induce different logarithms. This really doesn't pose much of a problem, but let's walk through a case where it possibly could.

Assume that $p( w)$ is a Fourier Transformable function; i.e: that it is $L^1$. Then,

\[
f( w,z) = ze^{\displaystyle 2 \pi i p( w)}\\
\]

And The Fourier Transform seems to be two different things,

\[
\widehat{f}(\xi,z) = \int_{-\infty}^\infty e^{-2 \pi i \xi  w}2 \pi i p( w)z\,d w \bullet z = ze^{\displaystyle 2 \pi i \widehat{p}(\xi)}
\]

And then secondly this,

\[
\widehat{f}(\xi,z) = \int_{-\infty}^\infty e^{-2 \pi i \xi  w}2 \pi i (p( w)+1)z\,d w \bullet z = \infty\\
\]

But, if we were clear about this operation, we would say that the Fourier operator $\widehat{\cdot}$ is actually performed across $h_1 = \frac{p( w)}{s-\zeta}$ and not performed across $h_2 = \frac{p( w)+1}{s-\zeta}$. One easy way to filter this out, is that $h_2$ is not $L^1$. The Fourier Transform makes no sense on $h_2$.

To this end; the logarithm function is defined across the function space $f[h]$ and not defined across the function space $f$. Furthermore, we recall that the logarithm function must be defined with respect to the function $\phi$.

All in all, there isn't much of a problem. It's just necessary that we're clear when we take The Compositional Fourier Transform; it's technically taken across $f[h]$ not $f$ itself. And that necessarily it depends on $h$. And in the spirit of this notation, we can say that,

\[
\widehat{f[h]} = f[\widehat{h}]\\
\]

With this straightened up, the author will still use the abuse of notation $\log(f; w,z)$ so long as the context is clear. Though it is to be remembered that this function depends on $h$; and this is an operation on $h$ and not $f$. And furthermore, $h$ is a variable in and of itself; existing in the function space $h:\mathcal{W}_a \times \mathcal{S} \to \widehat{\mathbb{C}}$ mapping to the space $f[h]$ where $f[h_1 + h_2] = f[h_1]\bullet f[h_2]$.

\section{A Fresh Coat of Fourier}\label{sec8}
\setcounter{equation}{0}

Throughout this section we let $\mathcal{S}$ be a simply connected domain in $\mathbb{C}$; and let $\mathcal{W}_a = \{ w \in \mathbb{C} :\,|\Im( w)| < a\}$ for some $a>0$. We will assume that $\phi(s,z) : \mathcal{S} \times \mathbb{C} \to \mathbb{C}$ is a holomorphic function. Let $\gamma$ be a Jordan curve in $\mathcal{S}$. We will assume that $h( w,s) : \mathcal{W}_a \times \mathcal{S} \to \widehat{\mathbb{C}}$ is a meromorphic function, which the singularities of $h$ are isolated in $s$.  We let,

\[
f[h]( w,z) = \oint_\gamma h( w,s)\phi(s,z)\,ds\bullet z\\
\]

As to the what to do; although we have just argued for The Fourier Transform; we haven't bothered to properly construct it yet. And init, we have not shown that any of these expressions actually make sense; that they don't diverge to infinity. Or when in fact, it does converge.

This section shall center around the work of Stein \& Shakarchi and their analysis of The Fourier Transform in \cite{SteSha}. To that end, we begin by assuming that $h( w,s)$ is of moderate decay. This is to mean for all $s \in \gamma$ there exists some $A > 0$ such that,

\[
|h(u + iv,s)| \le \frac{A}{1 + u^2}\\
\]

In Stein \& Shakarchi, if $h$ is of moderate decay, then $\widehat{h}$ exists and it too is of moderate decay. Even better it has exponential decay of order $2 \pi a>0$; where $a$ is the width of the strip $\mathcal{W}_a$. This is to mean for some $M>0$,

\[
|\widehat{h}(\xi,s)| \le M e^{-2 \pi a |\xi|}\\
\]

We are given the clear statement, without confusion, that,

\[
\log(f; w,z) = \int_\gamma h( w,s)\phi(s,z)\,ds\\
\]

Now one can clearly surmise that for all compact sets $\mathcal{K} \subset \mathbb{C}$ there exists an $A_\mathcal{K}$ such that,

\[
||\log(f;u+iv,z)||_{\mathcal{K}} \le \frac{A_{\mathcal{K}}}{1 + u^2}\\
\]

So that $\log$ is of moderate decay. Now in essence we're going to abstract this statement for our next theorem. In truth, this theorem is the summation of most of the work that's left.

\begin{theorem}
Suppose $g(t,z) : \mathbb{R} \times \mathbb{C} \to \mathbb{C}$ is continuously differentiable in $t$ and holomorphic in $z$. Assume for all compact disks $\mathcal{K} \subset \mathbb{C}$ that,

\[
\int_{-\infty}^\infty ||g(t,z)||_{z \in \mathcal{K}}\,dt < \infty\\
\]

If,

\[
G(z) = \int_{-\infty}^\infty g(t,z)\,dt\bullet z\\
\]

Then $G$ is holomorphic for $z \in \mathcal{D}$ such that $\overline{\mathcal{D}} = \mathbb{C}$.
\end{theorem}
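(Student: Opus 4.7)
The plan is to realize $G$ as a normally convergent limit of finite truncations, then appeal to the compactly normal convergence machinery already established. For $M, N > 0$, define
\[
G_{N,M}(z) = \int_{-N}^{M} g(t,z)\,dt \bullet z,
\]
which by Theorem \ref{thmDNSCVG} is holomorphic on a dense open set $\mathcal{D}_{N,M} \subset \mathbb{C}$. The compositional algebra gives, for $M' \ge M$ and $N' \ge N$,
\[
G_{N',M'} \;=\; \int_M^{M'} g(t,z)\,dt \bullet G_{N,M} \bullet \int_{-N'}^{-N} g(t,z)\,dt \bullet z,
\]
so everything reduces to controlling the two tails.

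First I would invoke the Triangle Inequality Theorem \ref{thmTriInq}: given any compact disk $\mathcal{K}$ sitting inside the eventual domain of normality, there is a slightly larger compact disk $\mathcal{L}$ so that
\[
\Big\|\int_{M}^{M'} g(t,z)\,dt \bullet z - z\Big\|_{\mathcal{K}} \le \int_{M}^{M'} \|g(t,z)\|_{\mathcal{L}}\,dt,
\]
with the analogous bound for the left tail. The $L^1$ hypothesis forces both tails to zero as $M, N \to \infty$.

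Second, I would transplant the inductive argument of Lemma \ref{lma2A}: by choosing $\mathcal{K}$ with radius $\delta$ inside an enclosing disk $\mathcal{L}$ of radius $P$ with $\delta + \int_{-\infty}^\infty \|g(t,z)\|_{\mathcal{L}}\,dt < P$, every partial truncation $G_{N,M}$ maps $\mathcal{K}$ into $\mathcal{L}$, so all the intermediate compositions remain in a fixed compact set. Combined with the tail bound above, this shows that $\{G_{N,M}\}$ is uniformly Cauchy on $\mathcal{K}$ in the compositional sense of Theorem \ref{ThmNormCvg}, and the uniform limit $G$ is therefore holomorphic on $\mathcal{K}$.

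Third, I would assemble these local patches. Let $\mathcal{D}$ be the union of all such disks $\mathcal{K}$ on which the above Cauchy condition holds; $\mathcal{D}$ is open and $G$ is holomorphic there. For density, note that $\mathbb{C} \setminus \mathcal{D}$ is contained in the union over $N, M \in \mathbb{N}$ of the non-normality sets of the truncations $G_{N,M}$, each of which has empty interior by Theorem \ref{thmDNSCVG}; a Baire-style argument (or simply: a countable union of nowhere-dense sets has dense complement) yields $\overline{\mathcal{D}} = \mathbb{C}$. The main obstacle, and the only point requiring real care, is the second step: certifying that the infinite concatenation of compositions does not drive points out of a common enclosing disk. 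This is exactly the role played by the inductive bookkeeping in Lemma \ref{lma2A}, and the tail hypothesis $\int \|g\|_{\mathcal{L}} < \infty$ is precisely what makes that bookkeeping succeed uniformly in $M$ and $N$.
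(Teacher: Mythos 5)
Your proposal is correct and follows essentially the same route as the paper's proof: split off the two tails, control them with the Triangle Inequality Theorem \ref{thmTriInq} and the $L^1$ hypothesis, establish normality of the central truncations so that composing with a small perturbation $(z+\epsilon)$ yields a uniform Cauchy condition on compact subsets of the domain of normality. Your closing Baire-category argument for $\overline{\mathcal{D}} = \mathbb{C}$ is a small addition the paper leaves implicit, but it does not change the method.
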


\begin{proof}
Take $T'>T>0$ and use The Triangle Inequality Theorem \ref{thmTriInq}. For all compact subsets $\mathcal{K} \subset \mathbb{C}$ there exists some $\mathcal{K} \subset \mathcal{J}$ and large enough $T$, such that,

\[
\Big{|}\Big{|}\int_{T}^{T'} g(t,z)\,dt\bullet z - z\Big{|}\Big{|}_{z \in \mathcal{K}} \le \int_{T}^{T'} ||g(t,z)||_{z \in \mathcal{J}}\,dt < \epsilon\\
\]

And similarly,

\[
\Big{|}\Big{|}\int_{-T'}^{-T} g(t,z)\,dt\bullet z - z\Big{|}\Big{|}_{z \in \mathcal{K}} < \epsilon\\
\]

Where $\epsilon$ is as small as desired by letting $T' > T > T_0$ be as large as possible. In this vein then,

\[
\int_{-\infty}^\infty = \int_T^\infty \bullet \int_{-T}^T \bullet \int_{-\infty}^{-T} = (z+\epsilon)\bullet \int_{-T}^T \bullet (z+\epsilon)\\
\]

If we can derive normality of the integral $\int_{-T}^T$, convergence follows suit. To derive normality, for $\mathcal{K} \subset \mathcal{H}$ a compact subset within a domain of holomorphy $\mathcal{H}$ for $T > T_0$; which is possible because the points of non-normality must be in some $\epsilon'$-neighborhood of each other--per what we've just shown. Using the triangle inequality,

\[
\Big{|}\Big{|}\int_{-T}^T g(t,z)\,dt\bullet z - z\Big{|}\Big{|}_{z \in \mathcal{K}} \le \int_{-T}^T ||g(t,z)||_{z \in \mathcal{J}}\,dt\\
\]

For $\mathcal{K} \subset \mathcal{J}$; of which as $T \to \infty$, the left hand side must remain bounded. Recalling that we are evoking compact normal convergence in $z$; and all these objects are holomorphic in $z$ minus a set of measure zero; one can see the uniform convergence in $z$ on compact subsets of their domains of holomorphy. For $T' > T > T_0$,

\begin{eqnarray*}
\Big{|}\Big{|} \int_{-T'}^{T'} - \int_{-T}^{T}\Big{|}\Big{|}_{z \in \mathcal{K}} &<& \Big{|}\Big{|} (z + \epsilon)\bullet \int_{-T}^{T}\bullet (z+\epsilon) - \int_{-T}^{T}\Big{|}\Big{|}_{z \in \mathcal{K}}\\
&<&\epsilon''\\
\end{eqnarray*}

For all $\epsilon'' > 0$ for large enough $T_0$. Which indeed shows that the integral does converge sufficiently well enough to satisfy the hypothesis.
\end{proof}

With this we can state Fourier's Inversion Theorem.

\begin{theorem}[Fourier's Inversion Theorem]\label{thmFurInv}
Let $\mathcal{S}$ be a simply connected domain in $\mathbb{C}$; and let $\mathcal{W}_a = \{ w \in \mathbb{C} :\,|\Im( w)| < a\}$ for some $a>0$. Let $\phi(s,z) : \mathcal{S} \times \mathbb{C} \to \mathbb{C}$ be a holomorphic function. We will assume that $h( w,s) : \mathcal{W}_a \times \mathcal{S} \to \widehat{\mathbb{C}}$ is a meromorphic function of moderate decay in $w$, and isolated singularities in $s$. Let $\gamma$ be a Jordan curve in $\mathcal{S}$. If,

\[
f = \oint_{\gamma}h( w,s)\phi(s,z)\,ds\bullet z\\
\]

And,

\[
\widehat{f} = \int_{-\infty}^\infty e^{-2 \pi i \xi  w}\log(f; w,z)\,d w \bullet z = \oint \widehat{h}(\xi,s)\phi(s,z)\,ds\bullet z\\
\]

Then,

\[
f = \int_{-\infty}^\infty e^{2 \pi i \xi  w}\log(\widehat{f};\xi,z)\,d\xi \bullet z\\
\]
\end{theorem}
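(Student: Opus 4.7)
The plan is to reduce this to the classical Fourier Inversion Theorem (as found in \cite{SteSha}) by pushing the outer integral inside the $\oint_\gamma$ and applying pointwise inversion in the $w$-variable on $h(w,s)$.

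First I would unwrap the logarithm using the convention established in the previous section, writing $\log(\widehat{f};\xi,z)=\int_\gamma \widehat{h}(\xi,s)\phi(s,z)\,ds$, so that the right-hand side becomes
\[
\int_{-\infty}^\infty e^{2\pi i \xi w}\int_\gamma \widehat{h}(\xi,s)\phi(s,z)\,ds\,d\xi\bullet z.
\]
The idea is to apply a truncated form of The Poor Man's Fubini's Theorem \ref{thmFUB} on $\tau_T = [-T,T]$ to get
\[
\oint_\gamma \int_{-T}^{T} e^{2\pi i \xi w}\widehat{h}(\xi,s)\,d\xi\,\phi(s,z)\,ds\bullet z,
\]
and then let $T\to\infty$. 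Because $\widehat{h}$ inherits exponential decay of order $2\pi a$ in $\xi$ uniformly for $s\in\gamma$ (by the moderate-decay hypothesis on $h$ in the strip $\mathcal{W}_a$), the inner integral converges absolutely and uniformly to the classical inverse Fourier transform $h(w,s)$.

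Next I would invoke the convergence theorem just proved above (the $\int_{-\infty}^\infty g(t,z)\,dt\bullet z$ theorem) to justify that both the $\xi$-integral and, after interchange, the limiting compositional integral along $\gamma$ remain normally convergent. The compact normal convergence of $\int_{-T}^T e^{2\pi i \xi w}\widehat{h}(\xi,s)\,d\xi \to h(w,s)$ on $\gamma\times\mathcal{K}$ passes through $\oint_\gamma\cdot\phi(s,z)\,ds\bullet z$ by the triangle inequality (Theorem \ref{thmTriInq}) applied to the difference of the truncated and full integrands, combined with The Summation Theorem \ref{thmSUM} in its limit form. This yields
\[
\oint_\gamma h(w,s)\phi(s,z)\,ds\bullet z = f(w,z),
\]
which is exactly what we want.

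The main obstacle is the second step: Poor Man's Fubini was only stated for a continuously differentiable arc $\tau$, whereas here $\tau = \mathbb{R}$ is unbounded. I would handle this by establishing the identity on each $[-T,T]$, then showing that the error introduced in passing to $T\to\infty$ inside $\oint_\gamma$ is controlled by $\oint_\gamma \|e^{2\pi i \xi w}\widehat{h}(\xi,s)\|\,|ds|$ integrated over $|\xi|>T$, which is summable in $T$ thanks to the exponential decay $|\widehat{h}(\xi,s)|\le Me^{-2\pi a|\xi|}$. A secondary subtlety is ensuring the logarithm is well-defined on $\widehat{f}$: by construction $\widehat{f}=f[\widehat{h}]$ where $\widehat{h}$ is holomorphic in $w$ and of moderate decay, so the framework of the preceding section applies and $\log(\widehat{f};\xi,z)$ is unambiguous.
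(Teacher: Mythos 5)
Your proposal is correct and follows essentially the same route the paper takes: unwrap the logarithm, interchange via The Poor Man's Fubini's Theorem \ref{thmFUB}, apply classical Fourier inversion to $\widehat{h}$ pointwise in $s$ using the exponential decay from \cite{SteSha}, and control the improper $\xi$-integral with the convergence theorem and Theorem \ref{thmTriInq}. In fact the paper gives no explicit proof beyond the chain of identities in Section \ref{sec6}, so your treatment of the truncation $[-T,T]\to\mathbb{R}$ and of the $f[h]$-dependence of the logarithm is, if anything, more careful than the original.
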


It's important to remember this equivalence is across the reduced group of congruent integrals. This is to mean $\widehat{\widehat{f}} \simeq f$. This entire discussion of the congruent integral, everything is up to conjugation.

It occurs to the author it may help the reader to visualize some examples of Fourier pairs. Fourier pairs from the separable case make for the best intuition. It's a bit more mysterious in the general case; as we are describing equalities between classes of functions; per the congruent integral. But when all is good and separable,

\begin{eqnarray*}
f( w,z) = z + p( w) &\Leftrightarrow& \widehat{f}(\xi,z) = z + \widehat{p}(\xi)\\
f( w,z) = z e^{\displaystyle p( w)} &\Leftrightarrow& \widehat{f}(\xi,z) = z e^{\displaystyle \widehat{p}(\xi)}\\
f( w,z) = \frac{1}{\displaystyle \frac{1}{z} + p( w)} &\Leftrightarrow& \widehat{f}(\xi,z) = \frac{1}{\displaystyle \frac{1}{z} +\widehat{p}(\xi)}\\
f( w,z) = \frac{1}{\displaystyle\sqrt[{n-1}]{\frac{1}{z^{n-1}} + p( w)}} &\Leftrightarrow& \widehat{f}(\xi,z) = \frac{1}{\displaystyle\sqrt[n-1]{ \frac{1}{z^{n-1}} +\widehat{p}(\xi)}}\\
f(w,z) = \log(e^z + p(w)) &\Leftrightarrow& \widehat{f}(\xi,z) = \log(e^z + \widehat{p}(\xi))\\
\end{eqnarray*}

The next thing to derive from these results is Poisson's Summation formula. If we look at the above equations, certainly,

\[
\OmSum_{n=-\infty}^\infty f(n,z) \bullet z = \OmSum_{n=-\infty}^\infty \widehat{f}(n,z)\bullet z\\
\]

This being a rephrasing of Poisson's Summation formula--through The Additive Theorem \ref{thmADD}. But, this arises far more generally than for the facile way it holds in the above examples. But nonetheless, we urge ourselves to show,

\begin{eqnarray*}
\OmSum_{n=-\infty}^\infty f(n,z) \bullet z &=& \OmSum_{n=-\infty}^\infty \oint_\gamma h(n,s)\phi(s,z)\,ds\bullet z\\
&=& \oint_\gamma \sum_{n=-\infty}^\infty h(n,s) \phi(s,z)\,ds\bullet z\\
&=& \oint_\gamma \sum_{n=-\infty}^\infty \widehat{h}(n,s) \phi(s,z)\,ds\bullet z\\
&=& \OmSum_{n=-\infty}^\infty \oint_\gamma \widehat{h}(n,s)\phi(s,z)\,ds\bullet z\\
&=& \OmSum_{n=-\infty}^\infty \widehat{f}(n,z) \bullet z\\
\end{eqnarray*}

Which reduces into nothing but an interchange theorem which is obtuse, to say the least. One shouldn't dissuade the result though. We have built up a lot of material to obtain this result. It is largely a statement about semi-groups; but a semi-group in a reduced conjugate class. The author hopes he has conveyed this comparison well and true. As the penultimate theorem of this paper it is intended to express the abstract nature of the summa.

\begin{theorem}[Poisson's Composition Formula]
Let $\mathcal{S}$ be a simply connected domain in $\mathbb{C}$; and let $\mathcal{W}_a = \{ w \in \mathbb{C} :\,|\Im( w)| < a\}$ for some $a>0$. Let $\phi(s,z) : \mathcal{S} \times \mathbb{C} \to \mathbb{C}$ be a holomorphic function. We will assume that $h( w,s) : \mathcal{W}_a \times \mathcal{S} \to \widehat{\mathbb{C}}$ is a meromorphic function of moderate decay in $w$ with isolated singularities in $s$. Let $\gamma$ be a Jordan curve in $\mathcal{S}$. If,

\[
f = \oint_{\gamma}h( w,s)\phi(s,z)\,ds\bullet z\\
\]

And,

\[
\widehat{f} = \int_{-\infty}^\infty e^{-2 \pi i \xi  w}\log(f; w,z)\,d w \bullet z\\
\]

Then,

\[
\OmSum_{n=-\infty}^\infty f(n,z) \bullet z = \OmSum_{n=-\infty}^\infty \widehat{f}(n,z) \bullet z\\
\]

And,

\[
\MhSum_{n=-\infty}^\infty f(n,z) \bullet z = \MhSum_{n=-\infty}^\infty \widehat{f}(n,z) \bullet z\\
\]
\end{theorem}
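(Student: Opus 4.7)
The plan is to follow the chain of equalities sketched by the author immediately before the theorem statement, and to verify that each step is legitimized by a result already established in the paper. Writing $h_n(s) = h(n,s)$, the aim is to show first that $\OmSum_{n=-\infty}^\infty f(n,z)\bullet z = \oint_\gamma \sum_n h_n(s)\phi(s,z)\,ds\bullet z$ via The Summation Theorem \ref{thmSUM}, then to apply the classical Poisson Summation Formula inside the integrand to replace $\sum_n h_n(s)$ by $\sum_n \widehat{h}_n(s)$, and finally to invoke The Summation Theorem \ref{thmSUM} again in reverse to arrive at $\OmSum_{n=-\infty}^\infty \widehat{f}(n,z)\bullet z$.

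The first step I would spell out is verification of the hypotheses of Theorem \ref{thmSUM}. The hypothesis on $h$ is that, for each $s$ in a neighborhood of $\gamma$, $h(\,\cdot\,,s)$ is holomorphic on the strip $\mathcal{W}_a$ and of moderate decay. Thus for each compact $\mathcal{B} \subset \mathcal{S}$ containing $\gamma$ there is a constant $A_{\mathcal{B}}$ with $\|h_n(s)\|_{s\in\mathcal{B}} \le A_{\mathcal{B}}/(1+n^2)$, so $\sum_n h_n(s)$ is normally summable on the domain of holomorphy in $s$. The Summation Theorem then applies and yields
\[
\OmSum_{n=-\infty}^\infty f(n,z)\bullet z = \OmSum_{n=-\infty}^\infty \oint_\gamma h_n(s)\phi(s,z)\,ds\bullet z = \oint_\gamma \sum_{n=-\infty}^\infty h_n(s)\phi(s,z)\,ds\bullet z .
\]

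The second step is the classical Poisson Summation Formula applied pointwise in $s$: since $h(\,\cdot\,,s)$ is of moderate decay and holomorphic on the strip $\mathcal{W}_a$, the hypotheses of the Stein--Shakarchi version of Poisson summation (\cite{SteSha}) are met, and $\sum_n h(n,s) = \sum_n \widehat{h}(n,s)$ for each $s\in\gamma$. This identity passes under the congruent integral, giving
\[
\oint_\gamma \sum_{n=-\infty}^\infty h_n(s)\phi(s,z)\,ds\bullet z = \oint_\gamma \sum_{n=-\infty}^\infty \widehat{h}_n(s)\phi(s,z)\,ds\bullet z .
\]
The third step reverses the first: the same Stein--Shakarchi input gives exponential decay $|\widehat{h}(n,s)| \le Me^{-2\pi a|n|}$ uniformly on $\gamma$, so $\{\widehat{h}_n\}$ is normally summable and Theorem \ref{thmSUM} re-packages the right-hand side as $\OmSum_n \widehat{f}(n,z)\bullet z$.

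The main obstacle I expect is bookkeeping for the congruent integral, not analytic convergence. Because the equalities live in the reduced group $\mathcal{A} = \mathcal{P}/\simeq$, the intermediate term $\oint_\gamma \sum_n h_n(s)\phi(s,z)\,ds\bullet z$ must be interpreted as a class, and the two applications of Theorem \ref{thmSUM} must agree on the representative chosen when pulling back to the compositional integral. Once this is acknowledged, the $\MhSum$ version follows by replacing $\gamma$ with $\gamma^{-1}$ and using the inversion identity $\big(\OmSum_n \oint_\gamma h_n\big)^{-1} = \MhSum_n \oint_{\gamma^{-1}} h_n$ invoked in the proof of Theorem \ref{thmSUM}, together with the symmetry of both the classical Poisson identity and the hypotheses on $h$.
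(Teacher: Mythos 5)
Your proposal follows exactly the chain of equalities the paper itself uses: apply The Summation Theorem to pull the infinite composition inside the congruent integral as a sum, invoke classical Poisson summation pointwise in $s$ on the integrand, and then apply The Summation Theorem in reverse; your verification of normal summability from moderate decay (and exponential decay of $\widehat{h}$ via Stein--Shakarchi) fills in hypotheses the paper leaves implicit. This is the same approach as the paper's, so no further comment is needed.
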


Lastly, we place before the readers eyes a commutative diagram. This is intended to show the entire malleability of compositional analysis. We shall not make this a theorem with a proof. But instead, only hint at the depths to be uncovered.

\begin{center}
\begin{tikzcd}
f_j \arrow[dd, "\OmSum"'] \arrow[rr, "\widehat{\cdot}"] &  & \widehat{f_j} \arrow[dd, "\OmSum"] \\
                                                        &  &                                    \\
\OmSum f_j \arrow[rr, "\widehat{\cdot}"]                &  & \OmSum\widehat{f_j}               
\end{tikzcd}
\end{center}

Or, if one is more prone to the discussion being across $f[h]$ rather than $f$,

\begin{center}

\begin{tikzcd}
{f[h_j]} \arrow[dd, "\sum"'] \arrow[rr, "\widehat{\cdot}"] &  & {f[\widehat{h_j}]} \arrow[dd, "\sum"] \\
                                                           &  &                                       \\
{f[\sum h_j]} \arrow[rr, "\widehat{\cdot}"]                &  & {f[\sum \widehat{h_j}]}              
\end{tikzcd}
\end{center}

All of this is done assuming that $f_j$ is compactly normally convergent. Which is to say that,

\[
\sum_j ||f_j( w,z) - z||_{\mathcal{W}_a, \mathcal{K}} < \infty
\]

Where the supremum norm is taken across $ w \in \mathcal{W}_a$, and compact subsets $z \in \mathcal{K} \subset \mathbb{C}$. This is to say, the convergence is normal and good. Which, again, is for all intents and purposes equivalent to,

\[
\sum_j ||h_j( w,s)||_{\mathcal{W}_a, \gamma} < \infty
\]

Where the supremum norm is taken across $ w \in \mathcal{W}_a$, and $s \in \gamma$. This being a slightly more natural identification, though the former is no less correct; so long as we keep track of $f[h]$ and the fact they be defined by $\phi(s,z)$.

So to the man who said land ho,

\begin{theorem}[The Poor Man's Fourier's Linearity Theorem]\label{thmPoorLin}
Let $\mathcal{W}$ be a horizontal strip in the complex plane. Let $f_j( w,z) : \mathcal{W} \times \mathbb{C} \to \widehat{\mathbb{C}}$ be a sequence of Derived Residuals of moderate decay in $w$; and normally summable. Then if,

\[
\OmSum_j f_j( w,z) \bullet z = F( w,z)\\
\]

Then,
\[
\OmSum_j \widehat{f_j}(\xi,z) \bullet z = \widehat{F}(\xi,z)\\
\]
\end{theorem}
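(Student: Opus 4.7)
The plan is to lift everything to the level of the underlying integrands, apply classical Fourier linearity there, and re-compose via the Summation Theorem. Since each $f_j$ is a Derived Residual, write $f_j(w,z) = \oint_\gamma h_j(w,s)\phi(s,z)\,ds\bullet z$ for a meromorphic $h_j: \mathcal{W}\times\mathcal{S} \to \widehat{\mathbb{C}}$ of moderate decay in $w$ and with isolated singularities in $s$. The remark closing the previous section shows that normal summability of $\{f_j\}$ on compacta is, for this integrand class, equivalent to $\sum_j \|h_j(w,s)\|_{\mathcal{W},\gamma} < \infty$, and this is the working hypothesis I would actually use.

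First I would assemble $F$: by The Summation Theorem \ref{thmSUM},
\[
F(w,z) = \OmSum_j f_j(w,z)\bullet z = \oint_\gamma H(w,s)\phi(s,z)\,ds\bullet z, \qquad H(w,s) := \sum_j h_j(w,s).
\]
The pointwise moderate-decay bounds $|h_j(u+iv,s)| \le A_j/(1+u^2)$, summed against normal summability, give moderate decay for $H$ uniformly in $s \in \gamma$, and $H$ inherits the isolated-singularities-in-$s$ condition. Hence Fourier's Inversion Theorem \ref{thmFurInv} applies to $F$, yielding $\widehat{F}(\xi,z) = \oint_\gamma \widehat{H}(\xi,s)\phi(s,z)\,ds\bullet z$. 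Classical linearity of the one-variable Fourier transform, legitimized by dominated convergence from the summable moderate-decay bound, gives $\widehat{H}(\xi,s) = \sum_j \widehat{h_j}(\xi,s)$; the Stein--Shakarchi estimate quoted in the Fresh Coat section then shows each $\widehat{h_j}$ decays like $M_j e^{-2\pi a|\xi|}$ with summable constants $M_j$. A second invocation of Theorem \ref{thmSUM} re-composes:
\[
\widehat{F}(\xi,z) = \OmSum_j \oint_\gamma \widehat{h_j}(\xi,s)\phi(s,z)\,ds\bullet z = \OmSum_j \widehat{f_j}(\xi,z)\bullet z,
\]
understood as equality in the congruent-integral group $\mathcal{A}$.

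The main obstacle is the coherent transfer of moderate-decay and normal-summability hypotheses through the Fourier transform. Concretely, one must verify that the constants $A_j$ bounding each $h_j$ can be chosen both summable \emph{and} uniform in $s \in \gamma$, so that $H$ genuinely lies in the hypothesis class of Fourier's Inversion Theorem, and that the resulting decay constants $M_j$ for $\widehat{h_j}$ are themselves summable and place $\{\widehat{f_j}\}$ back in the hypothesis class of the Summation Theorem. Once this bookkeeping is in place, the argument is precisely the commutative square drawn immediately before the theorem statement: two applications of Theorem \ref{thmSUM} straddling classical Fourier linearity at the integrand level.
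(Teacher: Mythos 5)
Your proposal matches the paper's intended argument exactly: the paper offers no formal proof beyond the commutative diagram and the analogous chain of equalities used for Poisson's Composition Formula, and that chain is precisely your two applications of The Summation Theorem \ref{thmSUM} bracketing classical Fourier linearity applied to the integrands $h_j$, with equality read in the congruent-integral group. Your extra bookkeeping on the summability and $s$-uniformity of the moderate-decay constants tightens what the paper leaves implicit, but it is the same route.
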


\chapter*{Appendix}
The author thought it might be enlightening to discuss further topics of more involved work; in the spirit of an advanced analysis. The compositional integral admits many similarities to the additive integral; so long as we pay close attention to where it isn't. Where it isn't similar is where the real exploits arise; so long as you can maintain a kind of correspondence between the additive integral and the compositional integral. We have only scratched the surface of what's possible.

The author has side-stepped most of the discussion of the compositional integral in real-analysis. Intrepidly the author jumped straight into complex analysis. He also felt much of the better results arise more naturally in complex analysis. In fact, he views the following as motivation that complex analysis should come first.

Suppose we were to take some function $H(\mathbf{x},s) : \mathbb{R}^n \times \mathcal{S} \to \widehat{\mathbb{C}}$. Where here, $H$ is meromorphic in $s$, but in $\mathbf{x}$, $H$ is merely $L^1$. Then, define the new derived residual,

\[
f(\mathbf{x},z) = \oint_{\gamma} H(\mathbf{x},s)\phi(s,z)\,ds\bullet z\\
\]

We go with the exact same identifications as before; $\phi$ is holomorphic and $\gamma$ is a Jordan curve. To this, we can define a Fourier Transform in $n$-variables; which produces a function $\widehat{f}(\mathbf{k},z) : \mathbb{R}^n \times \mathbb{C} \to \mathbb{C}$. Written,

\[
\widehat{f}(\mathbf{k},z) = \int_{\mathbb{R}^n} e^{-2 \pi i \mathbf{x} \cdot \mathbf{k}} \log(f;\mathbf{x},z)\,d\mathbf{x}\bullet z\\
\]

Where this is taken to mean,

\[
\widehat{f}(\mathbf{k},z) = \int_{-\infty}^\infty...\int_{-\infty}^\infty e^{-2 \pi i \mathbf{x} \cdot \mathbf{k}} \log(f;\mathbf{x},z)\,dx_1...dx_n\bullet z\\
\]

This is a perfectly fine function when we account for Fubini's theorem. Meaning, the above expressions reduce to,

\[
\oint_{\gamma} \widehat{H}(\mathbf{k},s)\phi(s,z)\,ds\bullet z\\
\]

The author has avoided a tremendous amount of discussion of real-analysis. He avoided it almost deliberately. Mostly he felt it may muddy the pond; and in a paper, it is difficult to have a foundational real-analytic discussion while you're having a foundational complex-analytic discussion. To that, we haven't really proven any real-analysis; though the author only begs the proof really; if you can do it in complex analysis it's not too far off to translate. But, digressions aside,

\[
f(\mathbf{x},z) = \int_{\mathbb{R}^n} e^{2\pi i \mathbf{x} \cdot \mathbf{k}} \log(\widehat{f};\mathbf{k},z)\,d\mathbf{k}\bullet z
\]

Now we require holomorphy in $s$ and $z$, but in $\mathbf{x}$ the author imagines it's as much as a free for all as one has in distributional analysis. \emph{Mais, N'on pas faire}. From this we can envision Poisson's summation across lattices; except now it's multiply layered infinite compositions. Which is to say,

\[
\OmSum_{\mathbf{a} \in \Lambda} f(\mathbf{a},z)\bullet z = \OmSum_{\mathbf{a}^* \in \Lambda^*} \widehat{f}(\mathbf{a}^*,z)\bullet z\\
\]

Where $\Lambda^*$ is the dual of $\Lambda$; speaking in lattices of course. We pretty much have just as much freedom as we have in additive analysis. When we concern ourselves with $\OmSum$ rather than $\sum$; and $\int\cdots ds\bullet z$ rather than $\int\cdots ds$; we have much of the same results. We just need to be careful of the context.

We could also define things like the Laplace transform; taking,

\[
f(x,z) = \oint_\gamma h(x,s)\phi(s,z)\,ds\bullet z\\
\]

And doing something like,

\[
F(y,z) = \int_0^\infty e^{-yx} \log(f;x,z)\,dx\bullet z\\
\]

Which satisfies,

\[
f(x,z) = \int_{c-i\infty}^{c+i\infty} \frac{e^{yx}}{2\pi i} \log(F;y,z)\,dy\bullet z\\
\]

We can begin to see the malleability of these transforms.

\section*{Closing Remarks}

We close this paper with a nod towards the possibilities of these expansions. The author feels this is only the surface value of these objects. This paper, however, had as its intent a slow analysis of $\int$ and $ds\bullet z$ and $\OmSum$, and the chemical interactions of these symbols. But init, it was done focusing on some bare rudimentary mechanics of complex analysis. We wanted to draw out the correspondence between $\int$ and $ds$ and $\sum$. There exists more cricks and crevices from this construction.

We hardly talked about the derived semi-group from the congruent integral. We made little to no mention of the dynamics of these things. We eschewed much of the questions regarding linear operators; and we glazed over much of the work. This paper, though; it was intended to motivate and high-light possibilities. It was only intended to break ground. There is still much digging to be done.


\begin{thebibliography}{1}

\bibitem{Con}
Conway, John B. (1978).
\emph{Functions of One Complex Variable I}.
Springer Science \& Business Media.

\bibitem{VirtInt}
Gill, John (2018).
\emph{An Elementary Note: Playing with Functional Derivatives in C}.
\verb~\url{www.Researchgate.net}~.

\bibitem{Nix}
Nixon, James D. (2020).
\textit{The Compositional Integral: A Brief Introduction}
arXiv

\bibitem{Nix2}
Nixon, James D. (2019).
\textit{$\Delta y = e^{sy}$, Or How I Learned to Stop Worrying and Love the $\Gamma$-function.}
arXiv

\bibitem{Rem}
Remmert, Reinhold. (1991).
\textit{Theory of Complex Functions}
Springer Science \& Business Media

\bibitem{Rem2}
Remmert, Reinhold. (1997).
\textit{Classical Topics in Complex Function Theory}
Springer Science \& Business Media

\bibitem{SteSha}
Stein \& Shakarchi. (2010).
\textit{Complex Analysis.}
Princeton University Press.

\end{thebibliography}
\end{document}